\documentclass[11pt]{amsart}
\vfuzz2pt	
\usepackage{amsthm,amsmath,amsfonts,amssymb,nicefrac}
\usepackage{color}
\usepackage{graphicx}
\usepackage{float,subcaption}
\usepackage{wrapfig}
\usepackage[dvips]{epsfig}
\usepackage[english]{babel}
\usepackage[utf8]{inputenc}
\usepackage[T1]{fontenc}
\usepackage{amsfonts}
\usepackage{enumerate}
\usepackage{eurosym}
\usepackage{xcolor}
\usepackage[bottom]{footmisc}
\usepackage{alltt}
\usepackage{algorithm}
\usepackage{algpseudocode}
\usepackage{booktabs}
\usepackage{mathtools}

\usepackage{caption}
\captionsetup[figure]{font=footnotesize}
\topmargin      -0.7cm                  
\headheight      0.7cm 
\headsep         1.0cm 
\topskip         0.0cm 
\textheight     22.6cm 

\evensidemargin  0.7cm 
\oddsidemargin   0.7cm 
\textwidth      15.0cm
\parindent       0.0cm                  
\parskip        5pt plus2pt minus1pt    
 
\frenchspacing                          
 
\addtolength{\topsep}{-0.9ex} 
\addtolength{\itemsep}{-0.9ex} 
 
\makeatletter 

\newcommand{\nosemic}{\renewcommand{\@endalgocfline}{\relax}}
\newcommand{\dosemic}{\renewcommand{\@endalgocfline}{\algocf@endline}}
\let\oldnl\nl
\newcommand{\nonl}{\renewcommand{\nl}{\let\nl\oldnl}}
\makeatother

\theoremstyle{plain}             
\newtheorem{theorem}{Theorem}[section]
\newtheorem{lemma}[theorem]{Lemma}     

\newtheorem{corollary}[theorem]{Corollary}

\pagestyle{headings}
 
\newcommand{\spn}{\operatorname{span}}

\algdef{SE}[SUBALG]{Indent}{EndIndent}{}{\algorithmicend\ }%
\algtext*{Indent}
\algtext*{EndIndent}
\algnewcommand\AlgOn[1]{\State\textbf{on} #1 \textbf{do}\Indent}
\algnewcommand\AlgEndOn{\EndIndent\State\textbf{end on}}

\begin{document}
\title[A fault-tolerant DDM based on space-filling curves]
{A fault-tolerant domain decomposition method based on space-filling curves}
\author{Michael Griebel}
\address{Michael Griebel,
Institut f\"ur Numerische Simulation, Universit\"at Bonn, Friedrich-Hirzebruch-Allee 7, 53115 Bonn,
and Fraunhofer Institute for Algorithms and Scientific Computing (SCAI), Schloss Birlinghoven, 
53754 Sankt Augustin, Germany}
\email{griebel@ins.uni-bonn.de}
\author{Marc-Alexander Schweitzer}
\address{Marc-Alexander Schweitzer,
Institut f\"ur Numerische Simulation, Universit\"at Bonn, Friedrich-Hirzebruch-Allee 7, 53115 Bonn,
and Fraunhofer Institute for Algorithms and Scientific Computing (SCAI), Schloss Birlinghoven, 
53754 Sankt Augustin, Germany}
\email{schweitzer@ins.uni-bonn.de}
\author{Lukas Troska}
\address{Lukas Troska,
Institut f\"ur Numerische Simulation, Universit\"at Bonn, Friedrich-Hirzebruch-Allee 7, 53115 Bonn,
and Fraunhofer Institute for Algorithms and Scientific Computing (SCAI), Schloss Birlinghoven, 
53754 Sankt Augustin, Germany}
\email{lukas.troska@scai.fraunhofer.de}

\keywords{Domain decomposition, parallelization, space-filling curve, fault tolerance}

\begin{abstract}
We propose a simple domain decomposition method for $d$-dimensional elliptic PDEs which involves an overlapping decomposition into local subdomain problems and a global coarse problem. It relies on a space-filling curve to create equally sized subproblems and to determine a certain overlap based on the one-dimensional ordering of the space-filling curve.  
Furthermore we employ agglomeration and a purely algebraic Galerkin discretization in the construction of the coarse problem. 
This way, the use of $d$-dimensional geometric information is avoided.
The subproblems are dealt with in an additive, parallel way, which gives rise to a subspace correction type linear iteration and a preconditioner for the conjugate gradient method.
To make the algorithm fault-tolerant we store on each processor, besides the data of the associated subproblem, a copy of the coarse problem and also the data of a fixed amount of neighboring subproblems with respect to the one-dimensional ordering of the subproblems induced by the space-filling curve. This redundancy then allows to restore the necessary data if 
  processors fail during the computation.  Theory from \cite{Griebel.Oswald:2019} supports that the convergence rate of such a linear iteration method stays the same in expectation, and only its order constant deteriorates slightly due to the faults. We observe this in numerical experiments for the preconditioned conjugate gradient method in slightly weaker form as well.
Altogether, we obtain a fault-tolerant, parallel and efficient domain decomposition method based on space-filling curves which is especially suited for higher-dimensional elliptic problems. 
	
\end{abstract}

\maketitle
\section{Introduction}
Higher-dimensional problems beyond four dimensions appear in many mathematical models in medicine, finance, engineering, biology and physics. 
Their numerical treatment poses a challenge for modern high performance compute systems. 
With the advent of petascale compute systems in recent years and exascale computers to arrive in the near future, there is tremendous parallel compute power available for parallel simulations. 
But while higher-dimensional applications involve a huge number of degrees of freedom, their efficient parallelization by e.g. conventional domain decomposition approaches is difficult. Load balancing and scalability might be dimension-dependent, especially for geometry-based domain decomposition approaches, and necessary communication might grow strongly with increasing dimensionality. 
Thus one aim is to find parallel algorithms with good load balancing, good scaling properties and moderate communication costs especially for higher-dimensional problems. 
Moreover systems with hundreds of thousand or even millions of processor units will be increasingly prone to failures, which can corrupt the results of parallel solvers or renders them obsolete at all. 
It is predicted that large parallel applications may suffer from faults as frequently as once every 30 minutes on future exascale platforms \cite{snir14}.
Thus a second aim is to derive not just balanced, scalable and fast parallel algorithms, but to make them fault-tolerant as well. 
Besides hard errors, for which hardware mitigation techniques are under development, there is the issue of soft errors \cite{kavi13,snir14,trea05}. For further details and literature on resilience and fault tolerance, see \cite{Agullo2020}.
Altogether, the development of fault-tolerant and numerically efficient parallel algorithms are of utmost importance for the simulation of large-scale problems.

In this article we focus on algorithm-based fault tolerance. 
Indeed, a fault-tolerant, parallel, iterative domain decomposition method can be interpreted as an instance of the {\em stochastic} subspace correction approach. 
For such methods there exists a general theoretical foundation for convergence, which was developed in a series of papers \cite{Griebel.Oswald:2011, Griebel.Oswald:2016,Griebel.Oswald:2017*1}. 
Moreover, for a conventional geometry-based domain decomposition approach, this theory was already employed in \cite{Griebel.Oswald:2019} to show algorithm-based fault tolerance theoretically and, for the two-dimensional case, also practically under independence assumptions on the random failure of subproblem solves. 
We now propose a simple domain decomposition method for $d$-dimensional elliptic PDEs which works for higher dimensions. 
Besides an overlapping decomposition into local subdomain problems, it also involves a global coarse problem. 
To create nearly equally sized subproblems, we rely on {\em space-filling curves}. 
This way, the overall number $N$ of degrees of freedom is partitioned for $P$ processors into $N/P$-sized subproblems regardless of the dimension of the PDE and the number of available processors. 
This is in contrast to many geometry-based domain decomposition methods, where -- e.g. in the uniform grid case with mesh size $h$ and $N\approx h^{-d}$ depending exponentially on $d$  -- the number of processors is usually to be chosen as a power of $d$. 
Furthermore, in our method, the overlap is determined based on the one-dimensional ordering of the space-filling curve as well.  
Moreover we employ agglomeration and a purely algebraic Galerkin discretization to again avoid $d$-dimensional geometric information in the construction of the coarse space problem. 
The subproblems and the coarse space problem are dealt with in an additive, parallel way, which leads to an additive Schwarz subspace correction method that resembles a block-Richardson-type linear iteration. To speed up convergence, we also employ this approach in a preconditioner for the conjugate gradient (CG) method for the fine grid discretization of the PDE.
To this end, we store the global coarse space problem redundantly on each processor and also solve it redundantly on each processor in parallel.
Moreover, to gain fault tolerance, we store on each processor not just the data of the associated subproblem and the coarse problem but also the data of a fixed amount of neighboring subproblems with respect to the {\em one-dimensional} ordering of the subproblems, which is induced by the space-filling curve. 
This results in sufficient redundancy of the stored data, whereas the amount of stored data is just enlarged by a constant. 
If a processor now fails in the course of the computation, a new replacement processor is invoked from a reserve batch instead of the faulty one, the corresponding necessary data is transferred to it from (one of) the neighboring processors, and the iterative method proceeds with its computation on this processor as well.
Altogether, we obtain an algorithm-based fault-tolerant, parallel, iterative algorithm, which can be interpreted as an instance of the {\em stochastic} subspace correction approach. 
Again, the theory in \cite{Griebel.Oswald:2019} supports that the convergence rate of the overall linear Richardson-type iteration stays in expectation the same, and only the order constant deteriorates slightly due to the faults, provided that the number of faults stays bounded and their occurrence among the processors is sufficiently well distributed. For the preconditioned conjugate gradient approach we do not have such a theory, but a similar, though slightly weaker behavior can nevertheless be observed in practice. 
Altogether, we obtain a fault-tolerant, well-balanced, parallel domain decomposition method, which is based on space-filling curves and which is thus especially suited for higher-dimensional elliptic problems.

The remainder of this paper is organized as follows:
In section \ref{sec:DDM} we discuss our domain decomposition method, which is based on a space-filling curve. We first give a short overview on domain decomposition methods and their properties for elliptic PDEs. Then we discuss space-filling curves and their peculiarities. Finally, we present our algorithm and its features.
In section \ref{sec:fault} we deal with algorithmic fault tolerance. Here we recall its close relation to randomized subspace correction for our setting. Then we present a fault-tolerant variant of our domain decomposition method. 
In section \ref{sec:numer} we discuss the results of our numerical experiments. We first define the model problem which we employ. Then we give convergence and parallelization results. Furthermore we show the behavior of our method under failure of processors.
Finally we give some concluding remarks in section \ref{sec:conclude}.

\section{A domain decomposition method based on space-filling curves}\label{sec:DDM}
\subsection{Domain decomposition}
The domain decomposition approach is a simple method for the solution of discretized partial differential equations and is typically used as a preconditioner for the conjugate gradient method or other Krylov iterative methods. Its idea can be traced back to Schwarz \cite{Schwarz}.
Depending on the specific choice of the subdomains, one can distinguish between overlapping and non-overlapping domain decomposition methods, where the subdomains geometrically overlap either to a certain extent or intersect only at their common interfaces. The latter are often also called iterative substructuring methods in the engineering community.
It turned out that such simple domain decomposition methods can not possess fast convergence rates and thus, starting in the mid 80s, various techniques had been developed to introduce an additional coarse scale problem, which provides a certain amount of global transfer of information across the whole domain and thus substantially speeds up the iteration. 
For the overlapping case it could be shown in \cite{dryawidlund87} that the condition number of the fine grid system preconditioned by such a two-level additive Schwarz method is of the order
\begin{equation}\label{AAA}
O(1+H/\delta),
\end{equation}
where $\delta$ denotes the size of the overlap and $H$ denotes the coarse mesh size. This bound also holds for small overlap \cite{dryawidlund94} and can not be improved further \cite{brenner20}.
Thus, if the quotient of the coarse mesh size $H$ and the overlap $\delta$ stays constant, the method is indeed optimally preconditioned and weakly scalable.
For further details on domain decomposition methods see e.g. the books \cite{smithborstedgropp96, quateronivalli99, toselliwidlund04, doleanjolivetnataf15}.

We obtain a two-level additive Schwarz method as follows: Consider an elliptic differential equation 
\begin{equation}\label{ellprob}
\mathcal{L} u=f \end{equation}
in the domain $\Omega \subset \mathbb{R}^d$, e.g. the simple Poisson problem on a $d$-dimensional cube. Using a conforming finite element, a direct finite difference or a finite volume discretization involving $N$ degrees of freedom and mesh size $h\approx N^{-1/d}$, we arrive at the system of linear equations
\begin{equation}\label{LES}
	A x = b
\end{equation}
with sparse  stiffness matrix $A \in \mathbb{R}^{N \times N}$, right hand side vector $b \in \mathbb{R}^N$ and unknown coefficient vector $x \in \mathbb{R}^N$, which needs to be solved.
Suppose that 
$$\Omega=\bigcup_{i=1}^P \Omega_i$$ 
is covered by a finite number $P$ of well-shaped subdomains $\Omega_i$ of diameter $\approx H$ which might locally overlap. It is silently assumed that $h<<H$ and that the subdomains are aligned with the fine mesh. 
Now denote by $N_i$ the number of grid points associated to each $\Omega_i$, i.e. the degrees of freedom associated to the subdomains $\Omega_i, i=1,\ldots,P$.
Then denote by $R_i: \mathbb{R}^{N} \to \mathbb{R}^{N_i}$ the restriction operators, which map the entries of the coefficient vector $x$ corresponding to the full grid on $\Omega$ to the coefficient vectors $x_i$ corresponding to the local grids on the subdomains $\Omega_i$.
Analogously denote by $R^T_i: \mathbb{R}^{N_i} \to \mathbb{R}^{N}$ the extension operators, which map the coefficient vectors from the local grid on the subdomains $\Omega_i$ to that of the full grid on $\Omega$ via the natural extension by zero.
Then the local stiffness matrices associated to the subdomains $\Omega_i$ can be denoted as $A_i \in \mathbb{R}^{N_i \times N_i}$  with ${A_i := R_i A R^T_i}$.
Finally, we add a coarse space problem with dimension $N_0$ as a second level via the restriction operator $R_0: \mathbb{R}^{N} \to \mathbb{R}^{N_0}$, which maps from the full grid on $\Omega$ to the respective global coarse space. The associated coarse stiffness matrix then can be generated via the Galerkin approach as 
$A_0 := R_0 A R^T_0$. Altogether, with the one-level additive Schwarz operator
\begin{equation}\label{AS2a}
C_{(1)}^{-1}:=   \sum_{i=1}^P  R_i^T A_i^{-1} R_i,
\end{equation}
this gives the two-level additive Schwarz operator
\begin{equation}\label{AS2}
	C_{(2)}^{-1}:= R_0^T A_0^{-1} R_0 + C_{(1)}^{-1} =   \sum_{i=0}^P  R_i^T A_i^{-1} R_i,
\end{equation}
which can be used, with a properly chosen relaxation parameter, directly for  a linear iterative method or as preconditioner within a steepest descent or conjugate gradient solver for (\ref{LES}). A notational variant based on space splittings is given in \cite{Griebel.Oswald:2019}.
Note that there are more sophisticated space splittings which follow the Bank-Holst technique \cite{BH2003},
where the coarse problem is formally avoided by including a redundant copy of it into each of the subdomain problems with $i=1,\ldots,P$. We will indeed follow this approach later on.

Now, if the condition number $\kappa(C_{(2)}^{-1}A)= \lambda_{\max}(C_{(2)}^{-1}A)/\lambda_{\min}(C_{(2)}^{-1}A)$ of the preconditioned system is
independent of the number $P$ of subproblems for fixed $N$, we obtain
{\em strong scalability}. If it is independent of the quotient $N/P$, i.e. the problem size per subdomain and thus per processor stays fixed, we obtain {\em weak scalability}. 
Moreover, if it is independent of the number $N$ of degrees of freedom, we would have an optimally preconditioned method, which however still may depend on $P$ and might thus not be scalable.
Note furthermore that we employ here for reasons of simplicity a direct solver for $A_i^{-1}$ on all subproblems and for $A_0^{-1}$ on the coarse scale, which involves Gaussian elimination and comes with a certain cost. However, the corresponding matrix factorization needs to be performed just once at the beginning and, in the plain linear iteration or in the preconditioned conjugate gradient iteration, only the cheaper backward and forward steps need to be employed. Alternatively, approximate iterative methods might be used as well, like the multigrid or BPX-multilevel method, which would even results in optimal linear cost for the subproblem solves. This given, to achieve a mesh-independent condition number for the preconditioned system $C_{(2)}^{-1}A$ with $C_{(2)}^{-1}$ as in (\ref{AS2}), one usually chooses for the coarse problem a suitable FE space on the mesh of domain partitions, where a linear FE space will do for a second-order elliptic problem such as (\ref{ellprob}). Mild shape regularity assumptions on the overlapping subdomains  $\Omega_i$ then guarantee robust condition number estimates of the form  
$
\kappa\le c(1+\frac H \delta),
$
see \cite[Theorem 3.13]{toselliwidlund04}.
Dropping the coarse grid problem, i.e. considering a one-level preconditioner as in (\ref{AS2a}) without the coarse problem $ R_0^T A_0^{-1} R_0 $,
would lead to the worse bound 
$\kappa\le c H^{-2}(1+H/\delta)$.
Note that, even though these estimates imply a deterioration of the condition number proportional to $\delta^{-1}$ if $\delta\to h$, in practice good performance has been observed when only a few layers of fine mesh cells form the overlap at the surface of the $\Omega_i$. 
With the use of an additional coarse grid problem based on piecewise linears, an optimal order of the convergence rate is then guaranteed for elliptic problems. The additional coarse grid problem results in a certain communication bottleneck,  which however can by principle not be avoided and is inherent in all elliptic problems.  Similar issues arise for multigrid and multilevel algorithms as well, but these methods are more complicated to parallelize in an efficient way on large compute systems. Moreover their achieved convergence rate and cost complexity is not better than for the domain decomposition approach with coarse grid, at least in order, albeit the order constant might be.

In the above two-level method,  the coarse problem is mostly determined as a kind of {\em geometric} coarse grid or mesh, which corresponds to the specifically chosen set of subdomains and also involves a geometric coarse mesh size. Moreover, on this geometric coarse grid, usually a set of  piecewise $d$-linear basis functions is employed, which directly gives a geometric $d$-dimensional prolongation to the fine grid space by e.g. interpolation. Furthermore a conventional discretization by finite elements (FE) is invoked on this coarse grid either by direct discretization or by Galerkin coarsening. This is the basis for both the proofs of the theoretical convergence estimates and the practical implementation of the algorithms. Besides, the wire basket type constructions \cite{bramblepasciakschatz89, smith90} and related techniques are used in theory and practice with good success. They result in somewhat more general coarse spaces, mimicking discrete harmonics to some extent, but are also based on geometric $d$-dimensional information of the fine mesh, the respective domain partitions and their boundaries. Altogether, such geometric coarse spaces work well for problems with up to three dimensions. Their extension to higher dimension, however, is neither straightforward nor simple and involves local costs which in general scale exponentially in $d$. More information on the development of coarse spaces for domain decomposition can be found in \cite{widlund09}.

Besides a mesh-based geometric coarse grid problem, we can derive a coarse problem in a purely {\em algebraic} way. To this end,  let $V_H$ be a coarse space with $N_0:=\dim(V_H)$ and let $Z$ be a basis for it, i.e. $V_H= \spn Z$. The restriction $R_0:\mathbb{R}^{N}\to \mathbb{R}^{N_0}$ from the fine grid space $V_h$ to the coarse space $V_H$ can be (algebraically) defined as the matrix $R_0:=Z^T\in \mathbb{R}^{N_0 \times N}$ and the coarse space discretization is again given via the Galerkin approach as
$A_0 := R_0 A R^T_0.$

There are now different specific choices for $Z$ or $R_0^T$, respectively.
In \cite{nicolaides87} it was suggested to employ the kernel of the underlying differential operator as coarse space, i.e. the constant space for the Laplace operator. Thus, with $N_0=P$,
$$R_0^T:=(R_i^T D_iR_i {\bf 1})_{1 \leq i \leq P}$$
with $ {\bf 1}=(1,\ldots,1)^T$ and with diagonal matrices $D_i$ such that a partition of unity results, i.e.
$$\sum_{i=1}^{P} R_i^T D_iR_i = I.
$$ 
Indeed, it is observed that the associated two-level preconditioner gives good results \cite{mansfield88, mansfield90} and weak scaling is achieved in practice. Note that this approach can be easily generalized to the case $N_0 = q P$ such that $q$ degrees of freedom are associated on the coarse scale to each subdomain instead of just one.
Improved variants, namely the balanced method \cite{mandel93} and the deflation method \cite{vuiknabben06}, had been developed subsequently.
With the definitions
\begin{equation}\label{balan}
 F:=R_0^T A_0^{-1}R_0 \quad \mbox{ and } \quad G:=I-AF
\end{equation}
and the plain one-level additive Schwarz operator $C_{(1)}^{-1}$ from (\ref{AS2a}),
we get the additively corrected operator $C_{(1)}^{-1}+F$ due to Nicolaides \cite{nicolaides87}, the deflated approach $G^TC_{(1)}^{-1} +F$ (\cite{vuiknabben06}) and the balanced version \cite{mandel93}
\begin{equation}\label{bala}
	C_{(2),bal}^{-1}:= G^TC_{(1)}^{-1}G+F.
\end{equation}
Closely related are agglomeration techniques inspired by the algebraic multigrid method, volume agglomeration methods stemming from multigrid in the context of finite volume discretizations and so-called spectral coarse space constructions, see e.g. \cite{alcinallain11} and \cite{doleanjolivetnataf15}.

\subsection{Space-filling curves}
A main question for domain decomposition methods is how to construct the partition $\{ \Omega_i\}_{i=1}^P$ in the first place. To this end, for a fixed number $P$, one aim is surely to derive a set of subdomains which involves equal size and thus equal computational load for each of the subdomains. If we just consider uniform fine grid discretizations, a simple uniform and geometric decomposition of the mesh would need $P$ to be a power of $d$, i.e. $P=\bar P^d$ with $\bar P$ being the amount of subdomain splitting in each coordinate direction. This however prohibits the use of a slowly growing number of processors and merely allows numbers of processors which are to be chosen as a power of $d$.
Similar issues arise for anisotropic mesh sizes $h=(h_1,\ldots, h_d)$ with $h_i \neq h_j$, more general quasi uniform finite element meshes (for which often  mesh partitioners like METIS or similar methods are employed) or adaptive meshes, where a well-balanced domain decomposition can be complicated to derive.
In this article, we follow the lead of \cite{griebelzumbusch00,griebelzumbusch01} and rely on {\em space-filling curves} to create nearly equally sized subproblems. 
This way, the overall number $N$ of degrees of freedom is partitioned for $P$ processors into $N/P$-sized subproblems (up to round-off deviations, i.e. up to a value of just one) regardless of the dimension of the PDE and the number of available processors. For further details see also \cite{GriebelKnapekZumbusch2007, Bader2013, zumbusch03}.

In general, we consider the situation where a domain $X$ in a one-dimensional space $X\subset \mathbb{R}$ is mapped to a domain $Y$ in a $d$-dimensional space $Y \subset \mathbb{R}^d$, $d \geq  1$, i.e. we consider the mapping
\[ s:X\to Y, x\to y := s(x).
\]
The question is: Is there a one-dimensional curve that runs through all points of the $d$-dimensional domain, i.e. that is indeed filling the whole domain $Y$?
In 1878 Cantor \cite{Cantor1878} discovered that there is a {\em bijective} function between  any  two  finite-dimensional  smooth  manifolds  independent  of  their  dimension. This is a remarkable and surprising result: For example, it is possible to map a one-dimensional curve 1:1 onto a two-dimensional surface and, consequently, the surface contains the same amount of points as the curve. 
Especially the points of the unit interval $[0,1]$ can be mapped onto the unit square $[0,1]^2$ and vice versa. 
Thus such a mapping from the interval to the square is  indeed filling the whole square. 
The second question then is: Is there a {\em continuous} mapping between any  two  finite-dimensional  smooth  manifolds $X$ and $Y$ independent  of  their  dimension?
In 1879 Netto \cite{Netto1879} settled this issue by proving that, if the dimensions of the two manifolds are different, such a function is necessarily discontinuous.
But what happens if one sacrifices bijectivity? Does a {\em surjective}, continuous map from the one-dimensional interval to the higher-dimensional manifold exist? This question was positively answered in 1890 by Peano \cite{Peano1890}. He was the first to present such a map, which is now known as Peano's space-filling curve. 
It maps the unit interval into the plane where the image has positive Jordan content.
One year later, Hilbert \cite{Hilbert1891} gave another map with such properties. 
While Peano's curve was defined purely analytically in the first place, Hilbert's approach was given by means of a geometric construction, see below.
In subsequent years, many other space-filling curves were discovered, for example
the curves by Lebesgue, 
Moore, Sierpi\'nski, Polya and Schoenberg.
For a detailed overview on space-filling curves we refer to the book \cite{Sagan1994}.

The construction of the various space-filling curves in general follows a specific recursive process. They differ in
the dimension $d$ of the space to be filled,
the shape of the space to be filled, i.e. the initial object (for example, a square or a triangle  in the case  $d=2$),
	the refinement rule for the decomposition of the respective objects (for example, subdivision by a factor of $1/2$ or $1/3$ in each direction),
	and the rule how the resulting sub-objects are to be connected or ordered.
The recursive construction process can be explained most easily for the two-dimensional case and for the refinement into four subsquares, 
i.e. for subdivision by a factor of $1/2$ in each coordinate direction. 
Indeed, this is the refinement rule which is used for the Hilbert curve, the Moore curve and the Lebesque curve.
Then the associated space-filling curve $s:[0,1]\to[0,1]^2$ is given as the limit of a sequence of curves $s_n:[0,1]\to[0,1]^2,  n=1,2,3,\ldots$.
Every curve  $s_n$ connects the centers of the $4^n$ squares that are created by successive subdivision of the unit square by line
segments in a specific way. 
The curve $s_{n+1}$ results from the curve $s_n$ as follows:  Each square is subdivided, the centers
of the newly created four smaller squares are connected in a given
order, and all the $4^n$ groups of $4^{n+1}$ centers of smaller squares 
are connected in the order given by the curve $s_n$. In this sense, the 
curve $s_{n+1}$ refines the curve $s_n$.

The various curves differ in the ordering of the centers
in each refinement step. In the case of the Lebesgue curve, the same
ordering is used everywhere, as shown in Figure~\ref{fig:leb}. 
For the Hilbert curve, the ordering is chosen 
in such a way that, whenever two successive  centers are connected by a straight line, only the 
common edge of the two squares is crossed. The construction
is made clearer in Figure~\ref{fig:hil}. 
There, from each iteration to the next, all existing subsquares are subdivided into four
smaller subsquares. These four subsquares are then connected by a pattern that is obtained
by rotation and/or reflection of the fundamental pattern given in Figure~\ref{fig:hil} (left). 
As it is known where the current discrete curve will enter and exit the existing subsquare, 
one can determine how to orientate the local pattern and can ensure that each square's first subsquare touches the previous square's last subsquare. 
One can show that the sequence $s_n$ for Hilbert's curve converges uniformly to a
curve $s$, which implies that this limit curve is continuous. 
For the Lebesgue curve, the sequence only converges pointwise
and the limit is discontinuous.
\begin{figure}
\begin{center}
    \leavevmode
    \includegraphics[width=0.20 \textwidth]{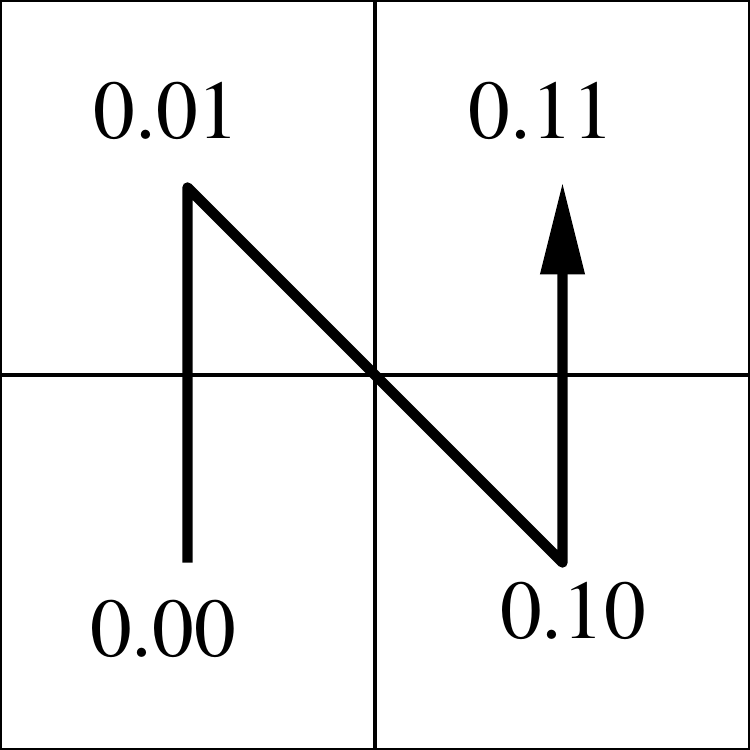} ~~~ 
    \includegraphics[width=0.20 \textwidth]{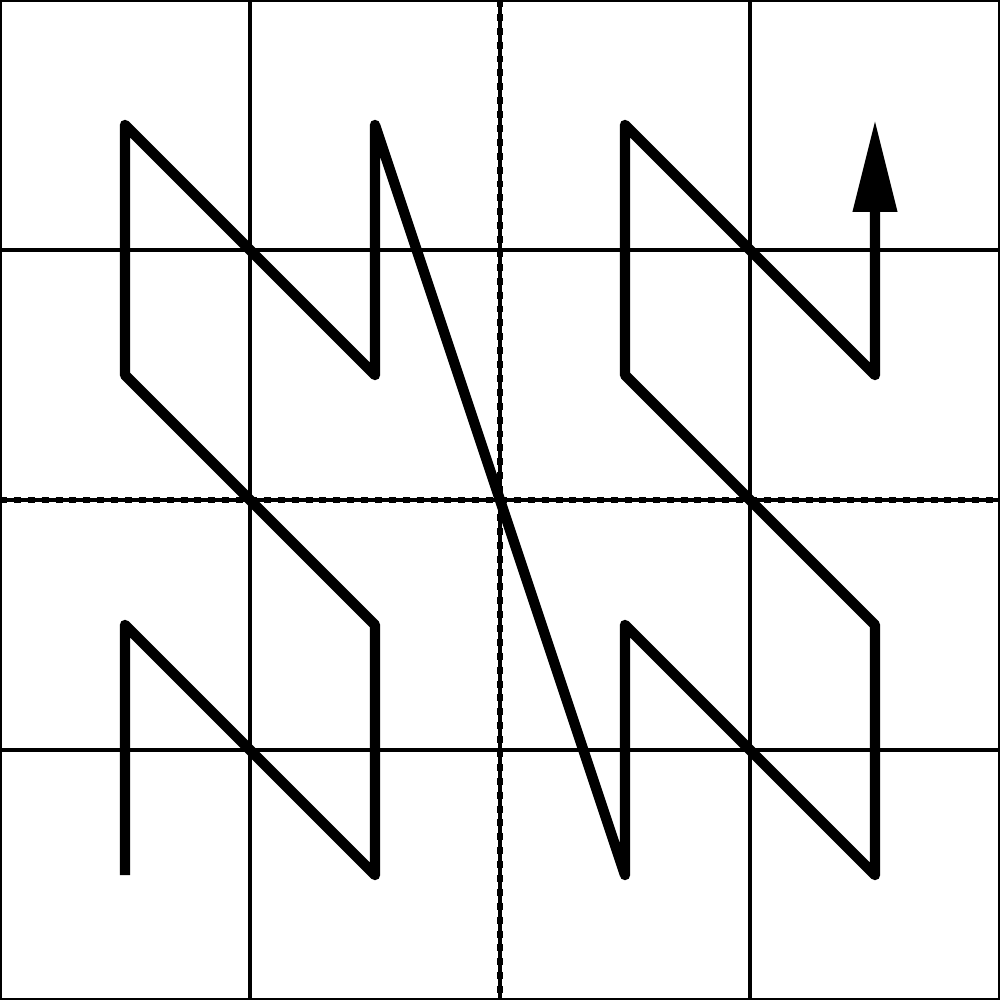} ~~~ 
    \includegraphics[width=0.20 \textwidth]{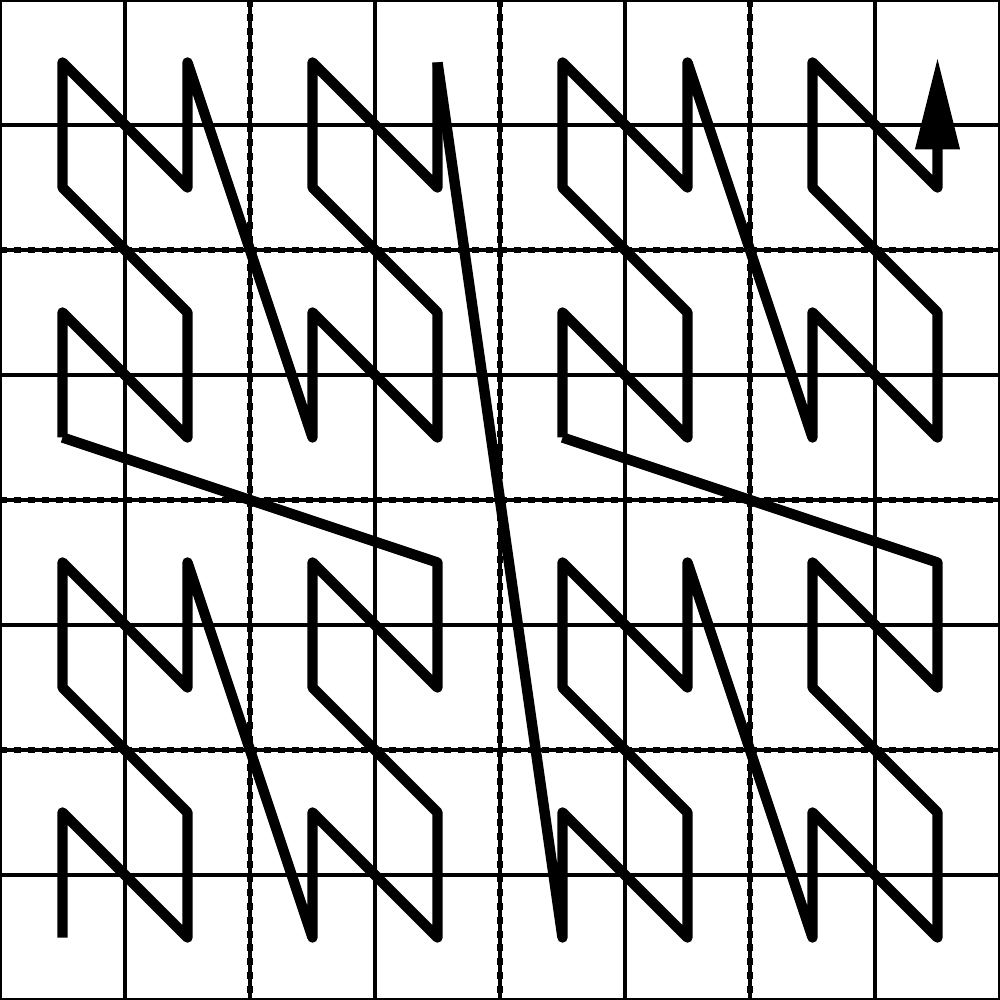}
    \caption{Three steps in the construction of the Lebesgue curve.
            \index{Lebesgue curve}}
    \label{fig:leb}
\end{center}
\end{figure}
\begin{figure}
\begin{center}
    \leavevmode
    \includegraphics[width=0.20 \textwidth]{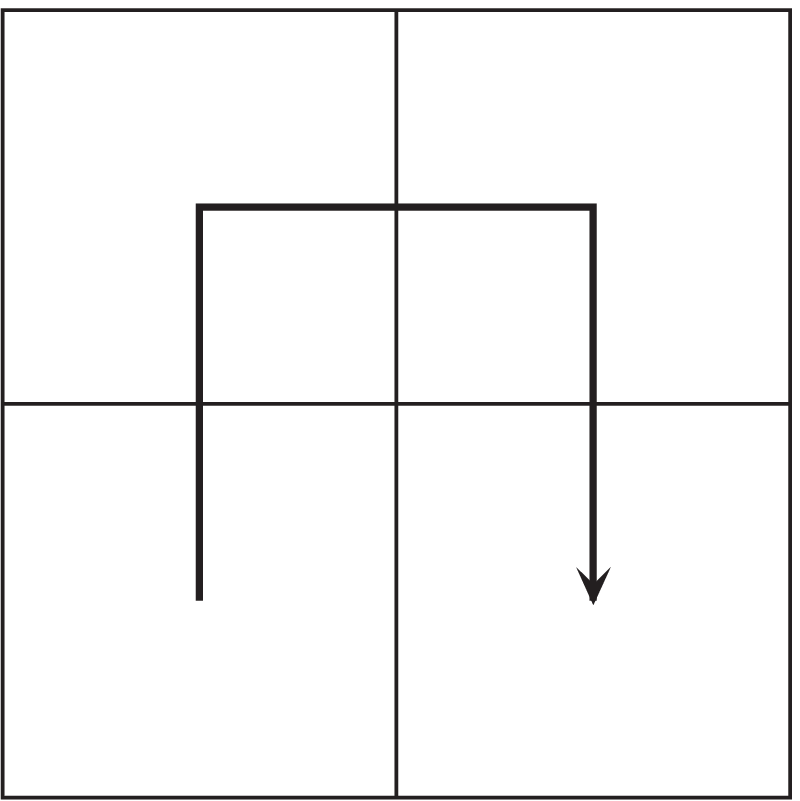} ~~~ 
    \includegraphics[width=0.20 \textwidth]{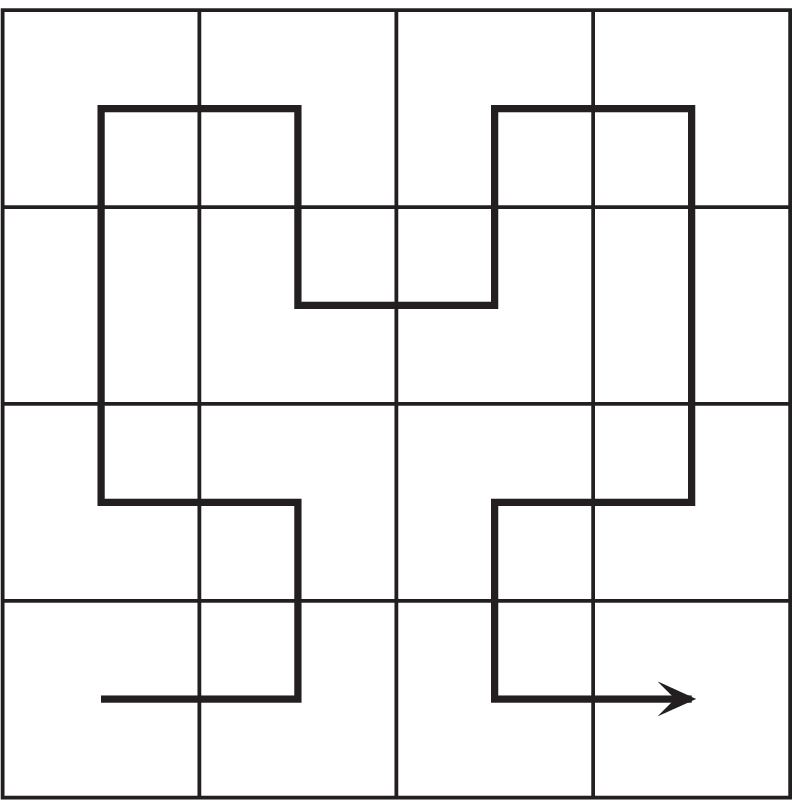} ~~~ 
    \includegraphics[width=0.20 \textwidth]{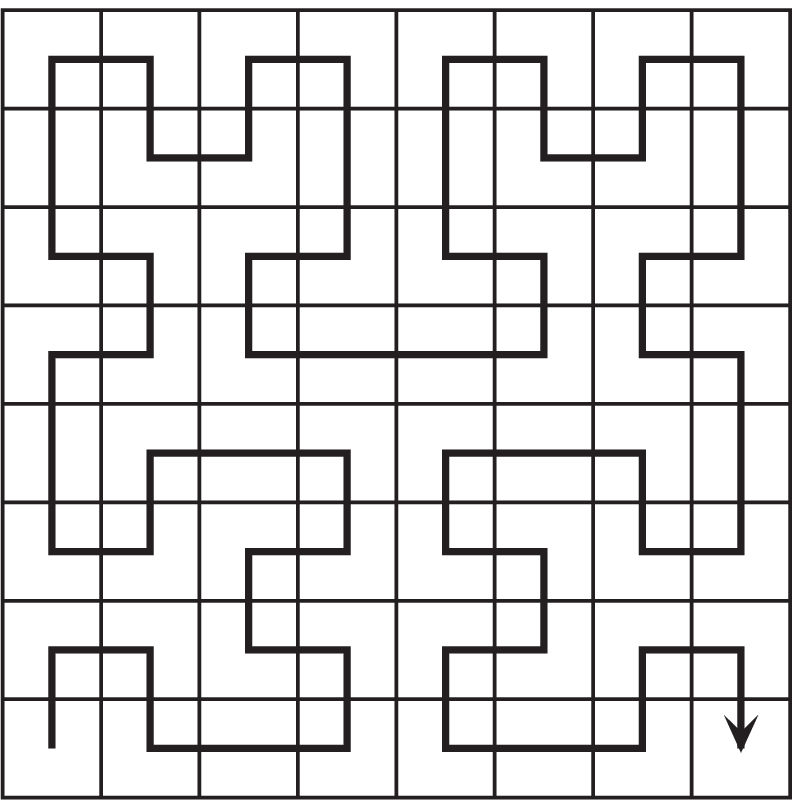}
    \caption{Three steps in the construction of the Hilbert curve.
            \index{Hilbert curve} }
    \label{fig:hil}
\end{center}
\end{figure}

An important aspect of any space-filling curve is its locality property, i.e. points that are close to each other in $[0,1]$ should tend to be close after mapping into the higher dimensional space. But how close or apart will they be?
To this end, let us consider the Hilbert curve. In \cite{zumbusch03} it was shown that the $d$-dimensional Hilbert curve $s(x)$ is H\"older continuous, but nowhere differentiable and that we have, for any $x,y \in [0,1]$, the H\"older condition
	\[	 |s(x)-s(y)|_2 \leq c_s |x-y|^{1/d} \quad \quad \mbox{ with } c_s = 2 \sqrt{d+3}. 	\]
The exponent $1/d$ of the H\"older condition is optimal for space-filling curves and cannot be improved. It also holds for most of the other curves (Peano, Sierpi\'nski, Morton,...), albeit with associated curve- and dimension-dependent constants.
It does not hold for the Lebesgue curve, since this curve linearly interpolates outside its defining Cantor set.
The Lebesgue curve is thus non-injective, almost everywhere differentiable and its H\"older exponent is merely $1/(d \log_2 3)$.

The two-dimensional recursive constructions can be generalized to higher space dimensions $d$, i.e. to curves  $s:[0,1]\to[0,1]^d$.
There exist codes for certain space-filling curves, and especially for the Hilbert curve. A recursive pseudo-code for $d$ dimensions is given in \cite{zumbusch03}. 
The approach in \cite{Butz1971} was further developed 
in \cite{Thomas1992, Moore2000, Lawder2000a} and \cite{Skilling2004}. 
Meanwhile, various code versions can be found on github.com \cite{chernoch, adishavit, pypi}. 
Our implementation of the $d$-dimensional Hilbert curve is based on \cite{Skilling2004}.

Space-filling curves were originally developed and studied for purely mathematical reasons. 
But since the 1980s they were successfully applied in numerics and computer science. 
This is due to the fact that in one dimension we have a {\em total ordering} for points whereas  
in higher dimensions there is only a partial ordering. 
Space-filling curves in principle allow to retract points from high dimensional space to  
one-dimensional space and to use the total order there. 
Moreover the implicit reduction of the dimension from $d$ to one is helpful in many situations. 
Since space-filling curves preserve locality and neighbor relations to some extent, partitions that contain neighboring points in one-dimensional space will (more or less) correspond to a local point set in the image space as well and it can be expected that a parallelization based on space-filling curves is able to satisfy the two main requirements for partitions, namely uniform load distribution and compact partitions.
Thus the space-filling approach is nowadays frequently used for partitioning and for static and adaptive mesh refinement, for the parallelization and dynamic load balancing of tree-based multilevel PDE-solvers and of tree-based fast integral transforms with applications in, e.g.,  molecular dynamics and computational astrophysics, compare \cite{GriebelKnapekZumbusch2007}. There, the underlying partitioning problem is NP-hard and can only be efficiently dealt with in an 
approximate heuristic manner. To this end, since space-filling curves provide a sequential order on a multidimensional computational domain and on the corresponding grid cells or particles used in simulations, the partitioning of the resulting list into parts of equal size is an easy task. 
A further advantage of the space-filling curve heuristic in this context is its simplicity: Parallel dynamic partitioning basically boils down to parallel bucket or radix sort, for further details see e.g. \cite{GriebelKnapekZumbusch2007,zumbusch03} and the references cited therein. 
Meanwhile, space-filling curves are also successfully applied in other areas of scientific computing where, besides parallelization,  cache-efficient implementation plays an increasingly important role. There exist now
cache-aware and cache-oblivious methods for matrix-matrix multiplication, for the handling of sparse matrices or for the stack\&stream-based traversal of space-trees using space-filling curves. For further details see, e.g., \cite{Bader2013} and the references given there.

\subsection{Our algorithm}\label{ssect}
Now we discuss the main features of our domain decomposition method for $d$-dimensional elliptic problems $\mathcal{L} u=f$.
For reasons of simplicity, we consider here the unit domain $\Omega=[0,1]^d$ and employ Dirichlet boundary conditions on $\delta \Omega$. 
The discretization is done with a uniform but in general anisotropic mesh size $h=(h_1,\ldots,h_d)$, where $h_j=2^{-l_j}$ with multivariate level parameter $l=(l_1,\ldots,l_d)$, which gives the global mesh $\Omega_h$. The number of interior grid points, and thus  the number of degrees of freedom is then 
\begin{equation}\label{dof}
	N:=\prod_{j=1}^d (2^{l_j}-1).
\end{equation}
The reason for considering the general anisotropic situation is the following: In a forthcoming paper, we will employ our specific domain decomposition method as a parallel and fault-tolerant inner solver within the combination method \cite{griebel92CombiTechnique, Griebel.Harbrecht:2014} for the sparse grid discretization \cite{Z, BG, nutshell} of high-dimensional elliptic PDEs. There, problems with in general anisotropic discretizations with the level parameters 
$$l=(l_1,\ldots,l_d)\in \mathbb{N}^d, \mbox{ where } |l|_1:= l_1 +\ldots +l_d = L+ (d-1)-i, i=0,\ldots, d-1,, l_j >0.$$
are to be solved.
The resulting solutions $u_l(x)$, $x=(x_1,\ldots,x_d)$, are then to be linearly combined as
\begin{equation}\label{eq:combi}
	u({ x}) \approx u^{(c)}_{L}({ x}):=
	\sum_{i=0}^{d-1} (-1)^i \left( \begin{array}{c} {d-1}\\{i}\end{array}\right) \sum_{|{l}|_1=L+(d-1)-i} u_{l}({x}).
\end{equation}
Figure~\ref{fig:combi_scheme} illustrates the construction of the combination method in the two-dimensional case with $ L=3$. Then five anisotropic coarse full grids 
$\Omega_{l}$ are generated on which we solve our problem. Once the problem is solved on each of the grids, we combine these solutions
with the appropriate weights (here either +1 or -1). This combination yields a sparse grid approximation (on the grid 
$\Omega_{L}^{(c)}$), which is our approximation to the problem on the (unfeasibly large) full grid $\Omega_{L}$. 
\begin{figure}[h]
	\begin{center}
		\includegraphics[width=0.99\textwidth]{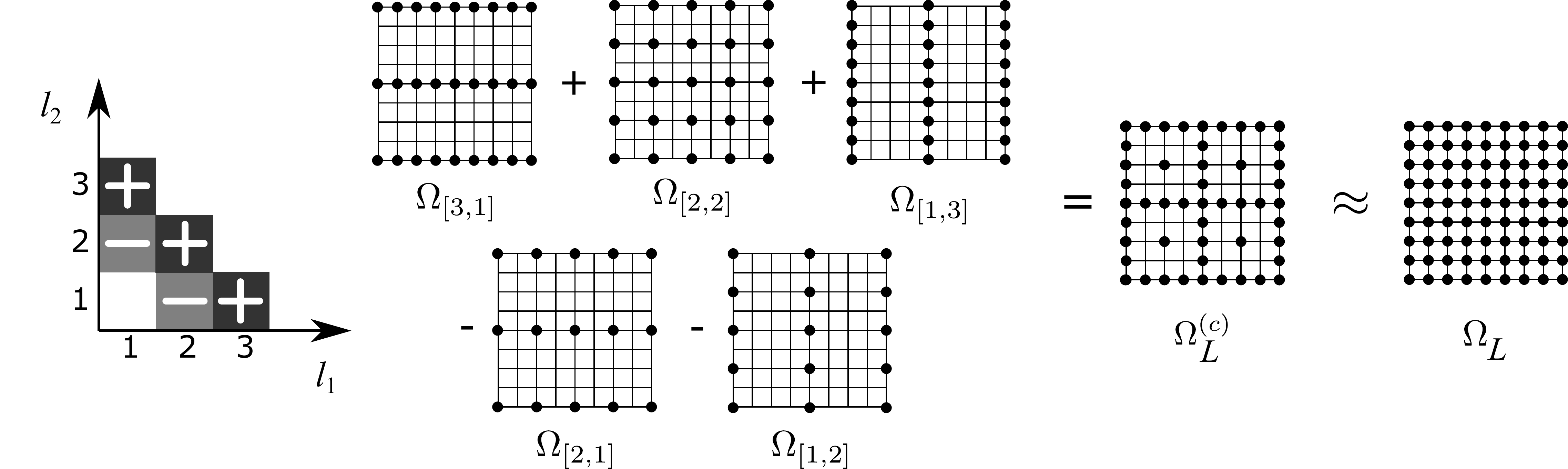}
	\end{center}
	\vspace{-1em}
	\caption{The combination method, two-dimensional case with $ L=3$.}
	\label{fig:combi_scheme}
\end{figure}

This results in a sparse grid approximation to the high-dimensional original problem thus breaking the curse of dimension of a conventional full grid discretization, provided that a certain bounded mixed regularity is present. For details see \cite{BG} and the references cited therein. We were able to show that the direct hierarchical sparse grid approach and the combination method indeed possess the same order of convergence, see also \cite{griebel92CombiTechnique,Bungartz.Griebel.Roschke.ea:1994*1,Griebel.Harbrecht:2014}. The combination method allows to reuse existing codes to a large extent. 
 Moreover the various discrete subproblems can be treated independently of each other \cite{Griebel:1992*2}, which introduces a second level of parallelization beyond the parallelism within each subproblem solver due to the domain decomposition. Furthermore, by means of a fault-tolerant domain decomposition method, also a fault-tolerant parallel combination  method for sparse grid problems is automatically induced. Thus it is necessary to be able to deal with the various anisotropic problems arising in the combination method in a simple and efficient manner with {\em one} single domain decomposition code, which runs straightforwardly and automatically for all these different problems and will not need tedious modifications and adaptions by hand.    

Now, for the discretization of $\mathcal{L}u=f$ on the grid $\Omega_h$, we employ the simple finite difference method (or the usual finite element method with piecewise $d$-linear basis functions) on $\Omega_h$ which results in the system of linear equations $A x = b$ with sparse system matrix $A \in \mathbb{R}^{N \times N}$ and right hand side vector $b \in \mathbb{R}^N$. 

Next, we consider the case of $P$ subdomains.  To generate a partition of $P$ overlapping subdomains $\{\Omega_i\}_{i=1}^P$ of equal size, we employ the space-filling curve approach and in principle just map our $d$-dimensional (interior) grid points $x_k \in \Omega_h$ by means of the inverse, discrete space-filling curve $s_n^{-1}$ with sufficiently large $n$ to the one-dimensional unit interval $[0,1]$. Then we simply partition the one-dimensional, totally ordered sequence of $N$ points into a consecutive one-dimensional set of disjoint subsets of approximate size $ N/P$ each. To this end, we first determine the remainder $r:=N- P \lfloor N/P\rfloor$. This gives us the number $r$ of subdomains which have to possess $\lfloor N/P\rfloor +1$ grid points, whereas the remaining $P-r$ subdomains possess just $\lfloor N/P\rfloor $ grid points. Thus, with 
\begin{equation}\label{sloc}
	\tilde N_i:=\lfloor N/P\rfloor +1, i=1,\ldots, r \mbox{ and } \tilde N_i=\lfloor N/P\rfloor, i=r+1, \ldots,P,
\end{equation}
we assign the first $\tilde N_1$ points to the set $\tilde \Omega_1$, the second $\tilde N_2$ points to the set $\tilde \Omega_2$, and so on. Since the $\tilde N_i$ differ at most by one, we obtain a perfectly balanced partition $\{\tilde \Omega_i\}_{i=1}^P$. The basic partitioning approach by means of the Hilbert curve is shown for the two-dimensional case in Figures \ref{sfcmap} and \ref{sfcmap2}. Note here that the resolution of the discrete isotropic space-filling curve is chosen as the one which belongs to the largest value $\max_{j=1,\ldots,d} l_j$ of the entries of the level parameter $l=(l_1,\ldots,l_d)$, i.e. to the finest resolution in case of an anisotropic grid. 
\begin{figure}[htb]
\centering
\begin{subfigure}{.47\textwidth}
  \centering
  \includegraphics[width=0.5\linewidth,height=0.5\linewidth]{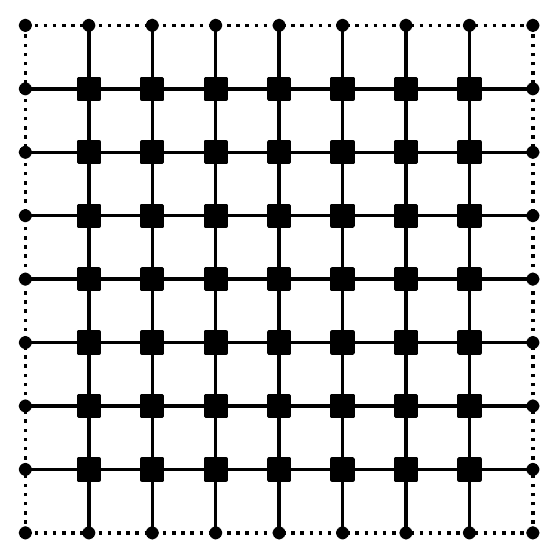}
\end{subfigure}%
{$\rightarrow{}$}%
\begin{subfigure}{.47\textwidth}
  \centering
  \includegraphics[width=0.5\linewidth,height=0.5\linewidth]{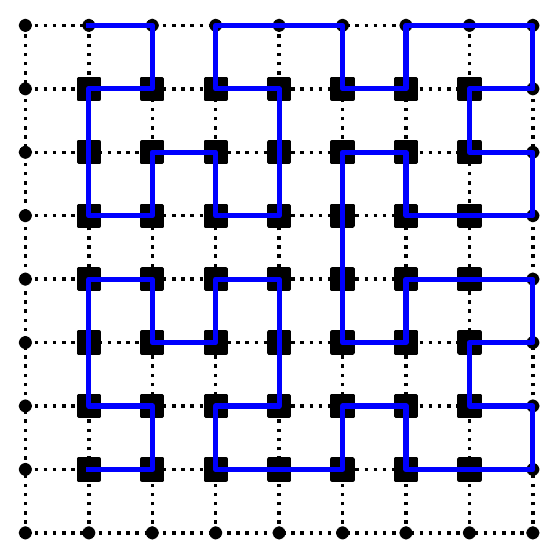}\\
\end{subfigure}
\begin{subfigure}{.47\textwidth}
  \hfill
\end{subfigure}%
\begin{subfigure}{.47\textwidth}
  \centering
\hspace{0.55cm}
  {$\downarrow{}$}%
\end{subfigure}
\vspace{-1.7cm}
\begin{subfigure}{.47\textwidth}
  \centering
  \includegraphics[width=0.5\linewidth,height=0.5\linewidth]{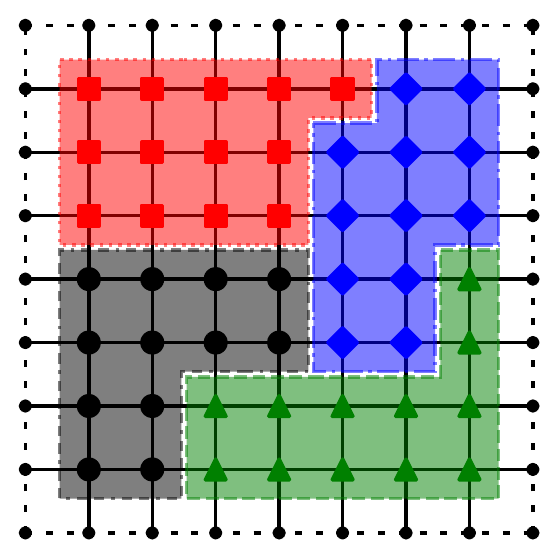}
\end{subfigure}%
{$\leftarrow{}$}%
\begin{subfigure}{.47\textwidth}
  \centering
  \includegraphics[width=0.6\linewidth,height=0.6\linewidth]{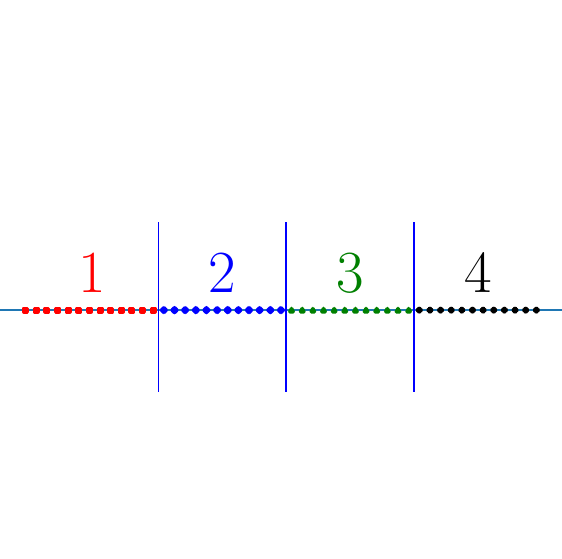}
\end{subfigure}
\vspace{1.7cm}
	\caption{Decomposition of an isotropic grid with ${l}=(3,3)$ by the Hilbert curve approach.}
\label{sfcmap}
\end{figure}
\begin{figure}[htb]
\centering
\begin{subfigure}{.47\textwidth}
  \centering
  \includegraphics[width=0.5\linewidth,height=0.5\linewidth]{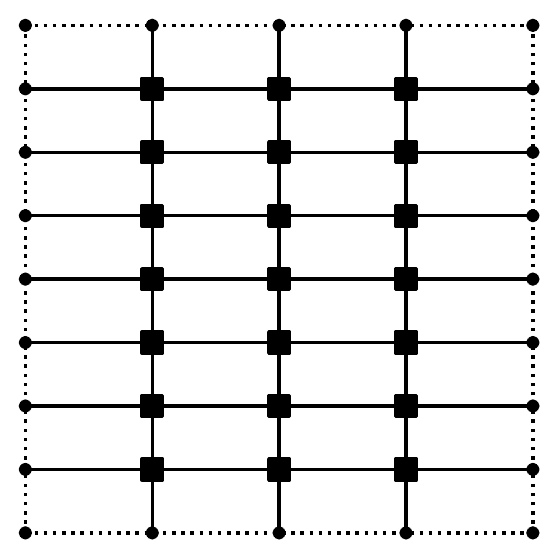}
\end{subfigure}%
{$\rightarrow{}$}%
\begin{subfigure}{.47\textwidth}
  \centering
  \includegraphics[width=0.5\linewidth,height=0.5\linewidth]{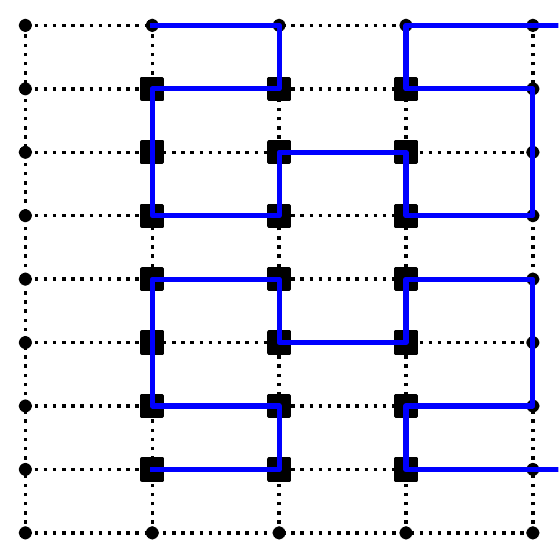}\\
\end{subfigure}
\begin{subfigure}{.47\textwidth}
  \hfill
\end{subfigure}%
\begin{subfigure}{.47\textwidth}
  \centering
\hspace{0.55cm}
  {$\downarrow{}$}%
\end{subfigure}
\vspace{-1.7cm}
\begin{subfigure}{.47\textwidth}
  \centering
  \includegraphics[width=0.5\linewidth,height=0.5\linewidth]{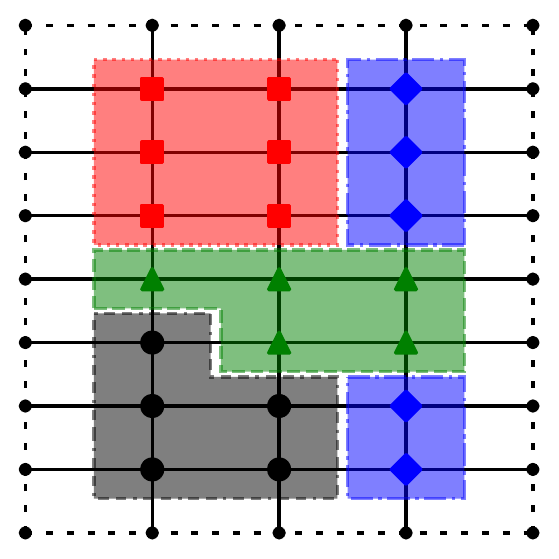}
\end{subfigure}%
{$\leftarrow{}$}%
\begin{subfigure}{.47\textwidth}
  \centering
  \includegraphics[width=0.6\linewidth,height=0.6\linewidth]{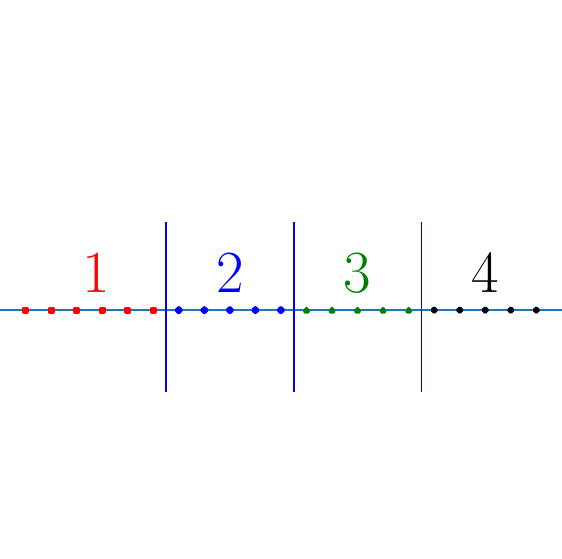}
\end{subfigure}
\vspace{1.7cm}
\caption{Decomposition of an anisotropic grid with ${l}=(2,3)$ by the Hilbert curve approach.}
\label{sfcmap2}
\end{figure}

In practice, we do not use the grid points $x_k=(x_{k_1},\ldots,x_{k_d}),\in \mathbb{R}^d$ with $ x_{k_j}=k_j 2^{-l_j}$ in the partitioning process, but merely their indices $k:=(k_1,\ldots, k_d) \in \mathbb{N}^d$, $k_j=1,\ldots 2^{l_j}-1$, which we store in an $N$-sized vector, e.g. in lexicographical order. The space-filling curve mapping then boils down to a simple {\em sorting} of the entries of this vector with respect to the order relation of the indices induced by the space-filling curve mapping. To this end, we just need the one-dimensional relation $<$ of the space-filling curve ordering of the mapped indices.
This is realized by means of a comparison function {\em cmp}$((k_1, ..., k_d), (k'_1, ..., k'_d))$ for any two indices $(k_1, ..., k_d)$ and $(k'_1, ..., k'_d)$,
which returns true if the index $(k_1, ..., k_d)$ is situated before the index $(k'_1, ..., k'_d)$ on the space-filling curve and which returns false in the else clause. For the case of the Hilbert curve, the implementation follows mainly \cite{Butz1971, Moore2000, spencer20, adishavit}. Our variant is based on bit-operations on the indices  $(k_1, ..., k_d)$ and $(k'_1, ..., k'_d)$ only and is non-recursive. It corresponds to a $2^d$-tree traversal with one rotation per depth of the tree on the bit level and a bit comparison. This avoids first explicitly calculating and then comparing the associated keys. Note here that, if we would first compute the full two keys, we would have to completely traverse the tree down to the finest scale of the discrete Hilbert curve and could only then compare the associated numbers. In our direct comparison we however descend iteratively down the tree and stop the traversal as soon as we detect on the current level that the two considered indices belong to different orthants. This is much faster. Moreover it avoids a problem which might occur for strongly anisotropic grids with the isotropic Hilbert curve:  For example, in the case of a grid with the multivariate level parameter $l=(L,1,\ldots,1)$ we still would have to deal with the universe of $2^{dL}$ possible indices and keys due to the {\em isotropy} of the $d$-dimensional Hilbert curve, but we employ for our anisotropic grid only $2^L$ indices altogether. This universe of keys for the Hilbert curve becomes, for larger $d$, with rising $L$ quickly too large for any conventional data type of the associated keys. Furthermore the keys would contain large gaps and voids in this universe, since only $2^L$ keys are present anyway. Our approach of using just a comparison relation without explicitly computing the two keys for the two indices still allows sorting of the indices. A key is then just given as the position in the sorted index vector of length $N=2^L$, i.e. we only need $2^L$ keys in our position-universe which now contains no voids or gaps at all. 
The sorting is done by introsort which has an average and worst-case complexity of $ O(N \log(N))$. We store the full vector on each processor redundantly and perform the sorting redundantly as well.
We will mainly consider the Hilbert curve ordering in our experiments. 

In a next step, we enlarge the subdomains $\tilde \Omega_j$, i.e. the corresponding subsets of grid point indices, in a specific way to create overlap.
This is not done as in conventional, geometry-based overlapping domain decomposition methods by adding a $d$-dimensional mesh-stripe with diameter $\delta$ of grid cells at the boundary of the $d$-dimensional subdomains that are associated to the sets $\tilde \Omega_i$. Instead, we deliberately stick to the one-dimensional setting which is induced by the space-filling curve: We choose an overlap parameter ${\gamma \in \mathbb{R}, \gamma >0,}$ and enlarge the index sets $\tilde \Omega_i$ as
\begin{equation}\label{enlarge}
	\Omega_i := \tilde \Omega_i \cup \bigcup_{k=1}^{\lfloor \gamma \rfloor} \left(\tilde \Omega_{i-k} \cup \tilde \Omega_{i+k} \right) \cup \tilde \Omega_{i-\lfloor \gamma \rfloor-1}^{\eta,+} \cup \tilde \Omega_{i+\lfloor \gamma \rfloor+1}^{\eta,-}.
\end{equation}
Here, with $\eta:=\gamma-\lfloor \gamma\rfloor$, the set $\tilde \Omega_{k}^{\eta,+}$ is the subset of $\tilde \Omega_{k}$ which contains its last $\lceil \eta N_{k}\rceil$ indices with respect to the space-filling curve ordering, while the set $\tilde \Omega_{k}^{\eta,-}$ is the subset of $\tilde \Omega_{k}$ which contains its first $\lfloor \eta N_{k}\rfloor$ indices.
For example, for $\gamma=1$, we add to $\tilde \Omega_i$ exactly the two neighboring index sets $\tilde \Omega_{i+1}$ and $\tilde \Omega_{i-1}$, for $\gamma=2$,  we add the four sets $\tilde \Omega_{i+1},\tilde \Omega_{i+2} $ and  $\tilde \Omega_{i-1},\tilde\Omega_{i-2}  $. 
For $\gamma=0.5$ we would add those halves of the indices of $\tilde \Omega_{i+1}$ and $\tilde \Omega_{i-1}$ which are the ones closer to $\tilde \Omega_i$, etc. 
Moreover, to avoid any special treatment for the first and last few  $\tilde \Omega_i, i=1,2,..  $ and $i=P,P-1,..$, we cyclically close the enumeration of the subsets, i.e. the left neighbor of $\tilde \Omega_1$ is set as $\tilde \Omega_P$ and the right neighbor of $\tilde \Omega_P$ is set as $\tilde \Omega_1$. Note that, besides $\gamma$, also the specific values of the $\tilde N_i$ enter here. Examples of the enlargement of the index sets from $\tilde \Omega_i$ to $\Omega_i$ with $\gamma=0.25$ are given in Figures \ref{sfccmap_overlap} and \ref{sfccmap_overlap2}. Here we show the induced domains $\Omega_1$ and $\Omega_3$ only.
\begin{figure}[htb]
\centering
\begin{subfigure}{.47\textwidth}
  \centering
  \includegraphics[width=0.5\linewidth,height=0.5\linewidth]{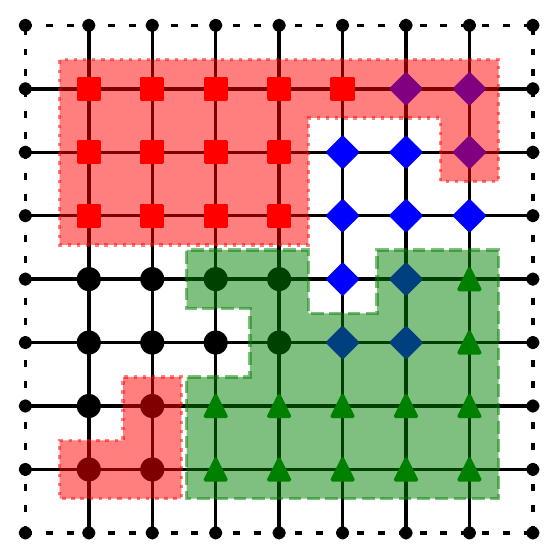}
\end{subfigure}%
{$\leftarrow{}$}%
\begin{subfigure}{.47\textwidth}
  \centering
	\includegraphics[width=0.6\linewidth,height=0.6\linewidth]{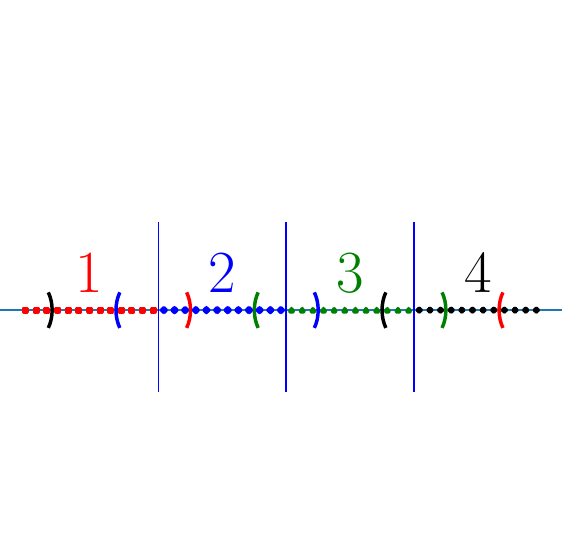}
\end{subfigure}
\caption{Enlargement of the subdomains with the Hilbert space-filling curve and associated overlapping domain decomposition, isotropic grid with ${l}=(3,3)$.}
	\label{sfccmap_overlap}
\end{figure}
\begin{figure}[htb]
\centering
\begin{subfigure}{.47\textwidth}
  \centering
  \includegraphics[width=0.5\linewidth,height=0.5\linewidth]{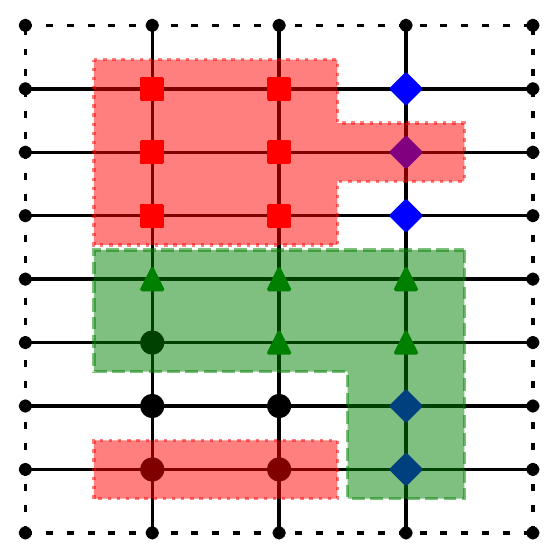}
\end{subfigure}%
{$\leftarrow{}$}%
\begin{subfigure}{.47\textwidth}
  \centering
	\includegraphics[width=0.6\linewidth,height=0.6\linewidth]{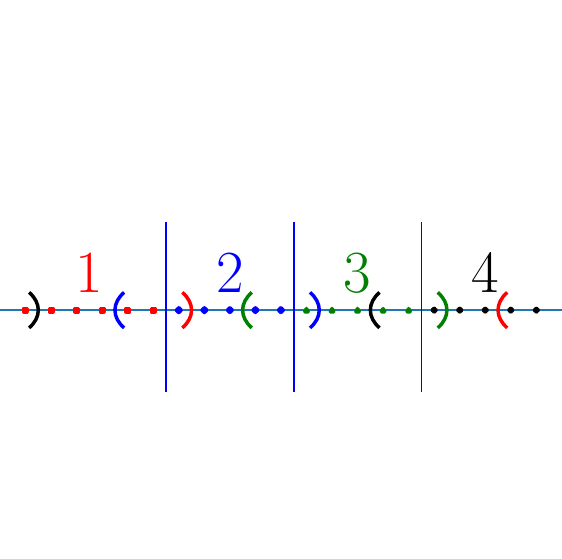}
\end{subfigure}
\caption{Enlargement of the subdomains with the Hilbert space-filling curve and associated overlapping domain decomposition,  anisotropic grid with ${l}=(2,3)$.}
	\label{sfccmap_overlap2}
\end{figure}

This way, an overlapping partition $\{\Omega_i\}_{i=1}^P$ is finally constructed. Note at this point that, depending on $N,P, \gamma$ and the respective space-filling curve type, each subdomain of the associated decomposition is not necessarily fully connected, i.e. there can exist some subdomains with point distributions that geometrically indicate some further separation of the subdomain, see e.g. $\Omega_1 $ in Figure \ref{sfccmap_overlap}. But since we merely deal with index sets and not with geometric subdomains, this causes no practical issues. 
Note furthermore that there is not necessarily always a complete overlap but sometimes just a partial overlap between two adjacent subdomains being created by our space-filling curve approach.
But this also causes no practical issues. In contrast to many other domain decomposition methods, where a goal is to make the overlap as small as possible, our approach rather aims at larger overlaps along the space-filling curve, which later gives the sufficient redundancy of stored data that will be needed for fault tolerance.
 
Finally, we set up our coarse space problem. To this end, the size $N_0$  of the coarse problem is given via the number $P$ of subdomains and the number of degrees of freedom $n_i$ considered on the coarse scale per subdomain, i.e.
$N_0 := \sum_{i=1}^P n_i.$
If we let all $n_i$ be equal to a fixed integer $q\in \{1,\ldots, \lfloor \frac N P \rfloor\}$, i.e. $n_i=q$, then $N_0= q \cdot P$. The mapping from the fine grid to the coarse space is now given by means of the restriction matrix $R_0 \in  \mathbb{R}^{N_0 \times N}$ and its entries. Again, we avoid any geometric information here, i.e. we do not follow the conventional approach of a coarse grid with associated coarse piecewise linear basis functions. Instead we derive the coarse scale problem in a purely {\em algebraic} way.
For that, we resort to the non-overlapping partition $\{\tilde \Omega_i\}_{i=1}^P$ and assign the values of zero and one to the entries of the restriction matrix as follows:
Let $q$ be the constant number of coarse level degrees of freedom per subdomain. With $\tilde N_i=\lfloor N/P\rfloor+1$ if $i \leq (N \mod P) $ and $\tilde N_i=\lfloor N/P\rfloor$ otherwise, which denotes the size of $\tilde \Omega_i$, we have the index sets $\tilde \Omega_i =\left\{ \sum_{j = 1}^{i-1} \tilde N_j + 1, \ldots, \sum_{j=1}^i \tilde N_j\right\} $. Now let $\tilde \Omega_{i, m}$ be the $m$-th subset of $\tilde \Omega_i$ with respect to the size $q$ in the space-filling curve ordering, 
i.e.
$$\tilde \Omega_{i, m} = \left\{\sum_{j=1}^{i-1} \tilde N_j + \sum_{n=1}^{m-1} \tilde N_{i, n} + 1, ..., \sum_{j=1}^{i-1} \tilde N_j + \sum_{n=1}^m \tilde N_{i, n}\right\},
$$
with associated size $\tilde N_{i,m}$ for which, with $q$ coarse points per domain, we have
\begin{equation*}
	\tilde N_{i, m} = \left( 
	\begin{array}{ll}
		\lfloor \tilde N_i/q \rfloor + 1  & \mbox{if } m \leq (\tilde N_i \mod q) \\ 
		\lfloor \tilde N_i/q \rfloor 	 & \mbox{otherwise}.
	\end{array}
\right.
\end{equation*}
Then,  for $i=1,...,P, m=1,...,q, j=1,...,N$, we have
\begin{equation}\label{restr}
	(R_{0})_{((i-1)q+m,j)}= \left( 
	\begin{array}{ll}
		1 & \mbox{if } j \in \tilde \Omega_{i,m} \\ 	0 & \mbox{otherwise}.
	\end{array}
\right.
\end{equation}
This way a basis is implicitly generated on the coarse scale: Each basis function is constant in the part of each subdomain of the non-overlapping partition which belongs to the
$\tilde \Omega_{i,m}$, where $q$ piecewise constant basis functions with support on $\tilde \Omega_{i,m}$ are associated to each $\tilde \Omega_i$.

The coarse scale problem is then set up via the Galerkin approach as 
\begin{equation}\label{csm}
A_0 = R_0 A R^T_0.
\end{equation}
Here we follow the Bank-Holst technique \cite{BH2003} and store a redundant copy of it on each of the $P$ processors together with the respective subdomain problem.  This way, the coarse problem is formally avoided. Moreover the coarse problem is redundantly solved on each processor. It interacts with the respective subproblem solver in an additive, parallel way, i.e. we solve the global coarse scale problem and the local fine subdomain problem independently of each other on the processor.

Finally, we have to deal with the overcounting of unknowns due to the overlapping of fine grid subdomains. To this end, we resort to the partition of unity  on the fine level
\begin{equation}\label{pum} 
	I = \sum_{i=1}^P R_i^T D_i R_i 
\end{equation}
with properly chosen diagonal matrices $D_i \in \mathbb{R}^{N_i \times N_i}$. 
This leads to the two-level domain decomposition operator
\begin{equation}\label{addSW}
	C_{(2),D}^{-1} :=  R_0^T A_0^{-1} R_0 + C_{(1),D}^{-1} = \sum_{i=0}^P  R_i^T D_i A_i^{-1} R_i, \quad D_0=I,
\end{equation}
with weighted one-level operator 
\begin{equation}\label{w1levprec}
C_{(1),D}^{-1} := \sum_{i=1}^P  R_i^T D_i A_i^{-1} R_i,
\end{equation}
and to the iteration in the $k$-th step
$$
x^{k+1} =x^k +C_{(2),D}^{-1}(b-Ax^k).
$$
Up to the coarse scale part, this resembles just the restricted additive Schwarz method of \cite{caisarkis99}, which is basically a weighted overlapping block-Jacobi/Richardson method. 
With the choice $D_0=I$, the associated weighted balanced variant is then
$$	C_{(2),D,bal}^{-1}:= G^T C_{(1),D}^{-1} G + F,
$$
with $G$ and $F$ from (\ref{balan}). The modifications of $F, G$ and thus $C_{(2),D,bal}^{-1}$ for general $D_0$ are obvious.

There are different choices of the $D_i$'s since the above condition (\ref{pum}) does not have a unique solution.
A natural choice is here
\begin{equation}\label{Di}
	(D_i)_{{j,j}} = 1/ |\left\{\Omega_{i'}, i'=1,\ldots P : j \in \Omega_{i'}\right\} |,
\end{equation}
which locally takes for each index $j\in \Omega_i$ the number of domains that overlap this index into account. Note that, for such general diagonal matrices $D_i$,  the associated 
$C_{(2),D}$ is not a symmetric operator. 
If however the $D_i$ are chosen as $\omega_i I_i$ with scalar positive weights $\omega_i$, then, on the one hand, the partition of unity property is lost, but, on the other hand, symmetry is regained and we have a weighted, two-level overlapping additive Schwarz method, which can also used as a preconditioner for the conjugate gradient method. To this end, a sensible choice is
\begin{equation}\label{omegai}
	\omega_i =\max_j (D_i)_{(j,j)}
\end{equation}
with $D_i$ from (\ref{Di}).

At this point a closer inspection of the relation of the overlap parameter $\gamma$ to the entries of $D_i$ or $\omega_i$, respectively, is in order. Of course we may choose any value for $\gamma$ and we may select quite freely the $D_i$ or $\omega_i$. However, if we want to achieve fault tolerance and thus employ a larger overlap of the subdomains, which are created as described above via our space-filling curve construction to allow for proper redundant storage and for data recovery, it is sensible to restrict ourselves 
to values of $\gamma$ that are integer multiples of $0.5$. In this case, every fine grid point is overlapped by exactly $2\gamma + 1$ subdomains, whereas, if $\gamma$ is not an integer multiple of 0.5, the number of subdomains that overlap a point can indeed be different for different points of the same subdomain. Additionally, integer multiples of 0.5 for $\gamma$ are the natural redundancy thresholds of our fault-tolerant space-filling curve algorithm. In particular for $\frac 1 2 n\leq\gamma < \frac 1 2 (n+1), n \in \mathbb{N}$, our fault-tolerant algorithm can recover from faults occurring for at most $n$ neighboring processors in the same iteration. Thus, with these considerations, overlap parameter values of the form $\gamma=\frac 1 2 n, n \in  \mathbb{N},$ are the ones that are most relevant for proper redundant storage, for data recovery and thus for fault tolerance in practice.
Additionally, such a specific choice of $\gamma$ has a direct consequence on the resulting $\omega_i$ and $D_i$. We have the following  Lemma.

\begin{lemma}\label{lemweig}
Let $d\ge1$ be arbitrary and let $\gamma = \frac{1}{2}n$, where $n\in\mathbb{N}, n\leq P - 1$. Then, with $c\coloneqq 2\gamma + 1$, there holds
\begin{equation*}
 D_i = \omega_i I = \frac{1}{c} I
\end{equation*}
for all $i=1,\ldots, P$ and any type of space filling curve employed.
\end{lemma}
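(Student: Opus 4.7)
The plan is to reduce everything to a single coverage-counting statement: I will show that, under the stated choice of $\gamma$, every fine grid index $j\in \Omega_h$ lies in exactly $c=2\gamma+1$ of the overlapping index sets $\Omega_i$. Once this is established, formula (\ref{Di}) gives $(D_i)_{j,j}=1/c$ for every $j\in\Omega_i$, hence $D_i=\frac{1}{c}I_i$; and (\ref{omegai}) then yields $\omega_i=1/c$, so $D_i=\omega_i I=\frac{1}{c}I$. Note that the type of space-filling curve never enters this argument: only the combinatorial recipe (\ref{enlarge}), which acts on the linear order of the blocks $\tilde\Omega_i$, is used. The assumption $n\le P-1$ is what guarantees that the cyclic closure of the indexing of the $\tilde\Omega_i$ does not cause any $\tilde\Omega_{i\pm k}$ to coincide, so that the $2\gamma+1$ blocks appearing below are genuinely distinct.

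I would split into two cases according to the parity of $n$. If $n$ is even, then $\gamma\in\mathbb{N}$, $\lfloor\gamma\rfloor=\gamma$ and $\eta=0$, so the terms $\tilde\Omega_{i\mp(\lfloor\gamma\rfloor+1)}^{\eta,\pm}$ are empty and (\ref{enlarge}) collapses to $\Omega_i=\bigcup_{k=-\gamma}^{\gamma}\tilde\Omega_{i+k}$ (indices taken cyclically mod $P$). Since the $\tilde\Omega_i$ form a disjoint partition of $\Omega_h$, any $j\in\tilde\Omega_{i_0}$ lies in $\Omega_i$ iff $i\equiv i_0-k\pmod P$ for some $k\in\{-\gamma,\ldots,\gamma\}$, giving exactly $2\gamma+1=c$ valid values of $i$ (the cyclic identifications are harmless because $2\gamma+1\le P$). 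This proves the coverage claim in the integer case.

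If $n$ is odd, write $\gamma=k+\tfrac12$ with $k=\lfloor\gamma\rfloor$ and $\eta=\tfrac12$. Here the key observation is that with $\eta=\tfrac12$ the set $\tilde\Omega_m$ is \emph{disjointly partitioned} into its first $\lfloor\tilde N_m/2\rfloor$ indices (the set $\tilde\Omega_m^{\eta,-}$) and its last $\lceil\tilde N_m/2\rceil$ indices (the set $\tilde\Omega_m^{\eta,+}$), since $\lfloor\tilde N_m/2\rfloor+\lceil\tilde N_m/2\rceil=\tilde N_m$. I will then count coverage of a point $j\in\tilde\Omega_{i_0}$ by inspecting (\ref{enlarge}): the ``bulk'' contribution from $k'\in\{-k,\ldots,k\}$ gives $2k+1$ subdomains containing $j$; of the two extra boundary terms, precisely one matches the half of $\tilde\Omega_{i_0}$ to which $j$ belongs (if $j$ sits in the first half of $\tilde\Omega_{i_0}$, then $j\in\Omega_{i_0-k-1}$ but $j\notin\Omega_{i_0+k+1}$, and symmetrically for the last half). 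In either subcase the total is $(2k+1)+1=2\gamma+1=c$.

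The combinatorial bookkeeping in the half-integer case is the only delicate point; everything else is immediate. Combining the two cases, every $j$ is covered by exactly $c$ of the $\Omega_i$, which finishes the proof via (\ref{Di}) and (\ref{omegai}) as indicated in the opening paragraph. I would also briefly remark that if $\gamma$ is \emph{not} a multiple of $\tfrac12$, then $\eta\in(0,1)\setminus\{\tfrac12\}$ and the ceiling/floor in the definition of $\tilde\Omega_m^{\eta,\pm}$ generally produce two partial slabs whose sizes do not sum to $\tilde N_m$, breaking the uniform coverage and so the uniformity of $D_i$; this is precisely why the hypothesis $\gamma=\tfrac{n}{2}$ is sharp for the conclusion.
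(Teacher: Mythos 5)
Your proposal is correct and follows essentially the same route as the paper's own proof: reduce to the claim that every fine grid index is covered by exactly $c=2\gamma+1$ of the $\Omega_i$, split on the parity of $n$, use that $\eta=0$ kills the boundary terms in the even case, and use the disjoint half-split $\tilde\Omega_m=\tilde\Omega_m^{\eta,-}\cup\tilde\Omega_m^{\eta,+}$ with $\lfloor\tilde N_m/2\rfloor+\lceil\tilde N_m/2\rceil=\tilde N_m$ to show exactly one of the two boundary terms catches $j$ in the odd case. The concluding remark on sharpness is a small, correct addition not present in the paper.
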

\begin{proof}
With the notation from above we need to show that we have $$|\left\{\Omega_{i}, i=1,\ldots P : j \in \Omega_{i}\right\}| =c=2\gamma + 1$$ for all $j=1,\ldots,N$, if $\gamma = \frac{1}{2}n$. 
To this end, let $j$ be arbitrary and $m\in\{1,\ldots,P\}$ such that $j\in\tilde\Omega_m$. Since the $\tilde\Omega_i$ are disjoint, there is exactly one such $m$. 
Now recall (\ref{enlarge}), i.e.
\begin{equation}\label{enlarge2}
	\Omega_i := \tilde \Omega_i \cup \bigcup_{k=1}^{\lfloor \gamma \rfloor} \left(\tilde \Omega_{i-k} \cup \tilde \Omega_{i+k} \right) \cup \tilde \Omega_{i-\lfloor \gamma \rfloor-1}^{\eta,+} \cup \tilde \Omega_{i+\lfloor \gamma \rfloor+1}^{\eta,-},
\end{equation}
	where $\eta = \gamma - \lfloor \gamma \rfloor$. 

	First consider the case where $n$ is even.
This implies that $\gamma$ is an integer and $\lfloor\gamma\rfloor=\gamma$. We obtain $\eta = 0$ and therefore $\tilde \Omega_{i-\lfloor \gamma \rfloor-1}^{\eta,+} = \tilde \Omega_{i+\lfloor \gamma \rfloor+1}^{\eta,-} = \emptyset$ for all $i$. Furthermore, due to the assumption $n\leq P - 1$, we have $\gamma \leq \frac{P-1}{2}$ and, since the $\tilde\Omega_i$ are disjoint by construction, the sets of the union in (\ref{enlarge2}) are disjoint. This implies that $j\in\Omega_i$ if and only if $j\in \tilde\Omega_i$ or $j \in \tilde\Omega_{i-k}$ or $j \in \tilde\Omega_{i-k}$ for some $k=1,\ldots,\gamma$. Equivalently, $j\in\tilde\Omega_i$ if and only if $i=m, i-k=m$ or $i+k=m$ for $k=1,\ldots,\gamma$. Hence there are exactly $1 + \gamma + \gamma = 2\gamma + 1$ values for $i$, namely $m$ and $m+1, \ldots, m+\gamma$ and $m-1, \ldots, m-\gamma$. Therefore, we obtain
\begin{equation*}
|\left\{\Omega_{i}, i=1,\ldots P : j \in \Omega_{i}\right\}| =2\gamma + 1.
\end{equation*}

	Now consider the case where $n$ is odd. Then $\eta=\frac{1}{2}$ and we have $\tilde \Omega_i^{\eta,-}\cup \Omega_i^{\eta,+} = \tilde\Omega_i$ and $\tilde\Omega_i^{\eta,-}\cap\tilde\Omega_i^{\eta,+}=\emptyset$ for all $i$, since $\tilde \Omega_i^{\eta,-}$ contains the first $\lfloor \eta N_i\rfloor=\lfloor \frac{N_i}{2}\rfloor$ and $\tilde \Omega_i^{\eta,+}$ contains the last $\lceil \eta N_i\rceil=\lceil \frac{N_i}{2}\rceil$ indices of $\tilde\Omega_i$, respectively. Thus there is exactly one $i$ such that $j\in\left(\tilde \Omega_{i-\lfloor \gamma \rfloor-1}^{\eta,+} \cup \tilde \Omega_{i+\lfloor \gamma \rfloor+1}^{\eta,-}\right)$. Furthermore, since $\gamma \leq \frac{P-1}{2}$, the sets of the union in (\ref{enlarge2}) are again disjoint. Using the same argument as for the previous case where $\gamma$ was an integer, there are now exactly $2\lfloor\gamma\rfloor + 1$ choices for $i$ such that $j\in\left(\tilde \Omega_i \cup \bigcup_{k=1}^{\lfloor \gamma \rfloor} \left(\tilde \Omega_{i-k} \cup \tilde \Omega_{i+k} \right)\right)$. Consequently, we also obtain for the case where $n$ is odd that 
\begin{equation*}
\begin{aligned}
|\left\{\Omega_{i}, i=1,\ldots P : j \in \Omega_{i}\right\}| &=2\lfloor\gamma\rfloor + 1 + 1 = 2(\lfloor\gamma\rfloor + 0.5) + 1\\
	&= 2(\lfloor\gamma\rfloor +\eta) + 1 = 2\gamma + 1. 
\end{aligned}
\end{equation*}
Altogether, this proves the assertion.
\end{proof}

We thus have seen that the general weightings $D_i$ and $\omega_i$ for the fine scale subdomains are the {\em same} and even boil down to a simple constant global scaling with the factor $1/c = 1/(2\gamma + 1) = 1/(n+1) $ uniformly for all subdomains $i=1,\ldots,P$, if we a priorily choose $\gamma = \frac{1}{2}n$, $n\in\mathbb{N}$. Recall that the single weight for the coarse scale problem was set to one in the first place. Thus we regain symmetry of the corresponding operator also for the $D_i$-choice, as long as we employ values for $\gamma$ that are integer multiples of $0.5$. This is a direct consequence of our one-dimensional domain decomposition construction due to the space filling curve approach involving the proper rounding in the definition of the overlapping subdomains $\Omega_i$ and using the value of $\gamma$ not absolutely but {\em relatively} with respect to the size of the non-overlapping subdomains $\tilde \Omega_i$, which are still allowed to be of different size. Moreover this property is independent of the respective type of space-filling curve and merely a consequence of our construction of the overlaps. Such symmetry of the operator can not be obtained so easily for the general $D_i$-weighting within the conventional geometric domain decomposition approach for $d>1$. Note also that the result of Lemma \ref{lemweig} holds analogously for more general geometries of $\Omega$ beyond the $d$-dimensional tensor product domain.
Note furthermore that, for the choice $\gamma = \frac{1}{2}n$, $n\in\mathbb{N}$, the weighted one-level operator (\ref{w1levprec}) becomes just a scaled version of the conventional 
one-level operator (\ref{AS2a}), i.e.
$$
C_{(1),D}^{-1}  =\frac 1 {2 \gamma+1}  C_{(1)}^{-1}.
$$
Consequently, if we choose $D_0=I$, we obtain
$$
C_{(2),D}^{-1} =  R_0^T A_0^{-1} R_0 + C_{(1),D}^{-1} = R_0^T A_0^{-1} R_0 + \frac 1 {2 \gamma+1}  C_{(1)}^{-1}.
$$
and 
$$
C_{(2),D,bal}^{-1}:= G^T C_{(1),D}^{-1} G + F =  \frac 1 {2 \gamma+1} G^T C_{(1)}^{-1} G + F,
$$
which now just resemble a fine-level-rescaled variant of the conventional two-level operator and of its balanced version, respectively.

In any case, we obtain the damped linear two-level Schwarz/Richardson-type iteration as given in Algorithm \ref{Alg:DDMAlgorithm2}, where the setup phase, which is executed just once before the iteration starts, is described in Algorithm \ref{Alg:DDMAlgorithm}. Its convergence property is well known, since it is a special case of the additive Schwarz iteration as studied in \cite{Griebel.Oswald:1995*1}. There, in Theorem 2, it is stated that the damped additive method associated to, e.g., our decomposition (\ref{addSW}) indeed converges in the energy norm for $0 < \xi  <2/\lambda_{\max}$ with convergence rate 
$$\rho =\max\{|1-\xi \lambda_{\min}|,|1-\xi \lambda_{\max}|\},
$$
where $\lambda_{\min}$ and $\lambda_{\max}$ are the smallest and largest eigenvalues of $C_{(2),D}^{-1}A$, provided that $C_{(2),D}^{-1}$ is a symmetric operator, which for example is the case for the choice (\ref{omegai}). Moreover the convergence rate takes the minimum
\begin{equation}\label{redrate}
\rho^* = 1- \frac 2 {1+\kappa} \quad \mbox{for} \quad \xi^* = \frac 2 {\lambda_{\min}+\lambda_{\max}}
\end{equation}
with $\kappa =\lambda_{\max}/\lambda_{\min}$. The proof is exactly the same as for the conventional Richardson iteration. 
To this end, the numbers $\lambda_{\max}$ and $\lambda_{\min}$ need to be explicitly known to have the optimal damping parameter $\xi^*$, which is of course not the case in most practical applications. Then good estimates, especially for $\lambda_{\max}$, are needed to still obtain a convergent iteration scheme, albeit with a somewhat reduced convergence rate. Note at this point that for the general non-symmetric case, i.e. for the general choice (\ref{Di}), this convergence theory does not apply. In practice however, convergence can still be observed.
\begin{algorithm}[hbt!]
\caption{Overlapping two-level additive Schwarz iteration with space-filling curve: Setup phase.}
\label{Alg:DDMAlgorithm}

\begin{algorithmic}[1]
\AlgOn{every processor $i=1,\ldots,P$}
    \State \parbox[t]{\dimexpr\linewidth-\algorithmicindent}{Set input parameters: $d$, $l=(l_1,\ldots,l_d)$, $P$, $\gamma$, $q$, $\xi$, type of space-filling curve.\strut}
    \State \parbox[t]{\dimexpr\linewidth-\algorithmicindent}{Derive $N$ from (\ref{dof}), set $\tilde N_i, i=1,\ldots,P$ as in (\ref{sloc}) and set $N_0=q\cdot P$.\strut}
    \State \parbox[t]{\dimexpr\linewidth-\algorithmicindent}{Compute the index vector $sfc\_index$ of length $N$ from the $d$-dimensional grid point indices $k=(k_1,\ldots,k_d)$, $k_j=1,\ldots,2^{l_j}-1$, $j=1,\ldots d,$ according to the space-filling curve by means of {\em cmp}$((k_1, ..., k_d), (k'_1, ..., k'_d))$  and introsort.\strut}
    \State \parbox[t]{\dimexpr\linewidth-\algorithmicindent}{Derive the disjoint subdomain index sets $\{\tilde \Omega_i\}_{i=1}^P$ by splitting the overall index set into $P$ subsets $\tilde\Omega_i$ of consecutive indices, each of size $\tilde N_i$. This is simply done by storing two integers $\tilde ta_i,\tilde tb_i$, which indicate where the local index sequence of $\tilde \Omega_i$ starts and ends in $sfc\_index$.\strut}
    \State \parbox[t]{\dimexpr\linewidth-\algorithmicindent}{Derive the overlapping subdomain index sets $\{\Omega_i\}_{i=1}^P$ by enlarging the $\tilde\Omega_i$ with $\gamma$ as in (\ref{enlarge}). Again, this is simply done by storing two integers  $ ta_i,tb_i$, which indicate where the local index sequence of $\Omega_i$ starts and ends in $sfc\_index$.\strut}
    \State \parbox[t]{\dimexpr\linewidth-\algorithmicindent}{Set up a map to neighboring grid points that are not in $\Omega_i$, i.e. store their global indices, to later determine the column entries of the stiffness matrix that are situated outside of $\Omega_i$.\strut}
    \State \parbox[t]{\dimexpr\linewidth-\algorithmicindent}{Set the rows of $A$ that belong to $\Omega_i$, i.e. store the rows of $A$ with indices $j \in [ta_i,tb_i]$ in CRS format.\strut}
    \State \parbox[t]{\dimexpr\linewidth-\algorithmicindent}{Initialize the part of the starting iterate $x^0$ and the part of $b$ that belong to $\Omega_i$.\strut}
    \State \parbox[t]{\dimexpr\linewidth-\algorithmicindent}{Derive the rows of the matrix $R_0$ from (\ref{restr}) with indices $j \in [(i-1)q+1, \dots,iq] $ and store them in CRS format.\strut}
    \State \parbox[t]{\dimexpr\linewidth-\algorithmicindent}{Compute the rows of the coarse scale matrix $A_0$ as in (\ref{csm}) that belong to $\Omega_i$, i.e. with the indices $j \in [(i-1)q+1, \dots,iq] $, and store them in CRS format.\strut}
\AlgEndOn
\end{algorithmic}

\end{algorithm}

\begin{algorithm}[hbt!]
\caption{Overlapping two-level additive Schwarz iteration with space-filling curve: Linear iteration.}
\label{Alg:DDMAlgorithm2} 

\begin{algorithmic}[1]
\State Set k=1.
\While{not converged}
\AlgOn{every processor $i=1,\ldots,P$}
\State \parbox[t]{\dimexpr\linewidth-\algorithmicindent}{Compute the part $r_i^k=R_i r^k$ of the residual $r^k=b-Ax^{k}$ that belongs to $\Omega_i$.\strut}
\State Solve the local subproblems
    \begin{equation*}
  A_i d_i^k=r_i^k. 
    \end{equation*}
\State Solve redundantly the coarse scale problem
                \begin{equation*}
                    A_0 d_0^k = R_0 r^k .
                \end{equation*}
\State \parbox[t]{\dimexpr\linewidth-\algorithmicindent}{Compute the part $x_i^{k+1}=R_ix^{k+1}$ of the correction
            \begin{equation*}
                   x^{k+1}= x^k+\sum_{i=0}^P \xi R_i^T D_i d_i^k
            \end{equation*}
            that belongs to $\Omega_i$.\strut}
\AlgEndOn
\EndWhile
\end{algorithmic}

\end{algorithm}

Here the following remarks are in order: 
We store and sort on each processor redundantly the  full index vector $sfc\_index$ of size $N$. 
The sorting could be done in parallel and non-redundantly, but the redundant sorting is very cheap anyway. 
The two numbers $ta_i,tb_i$ need, depending on the overlap factor $\gamma$, to be interpreted properly for the first and last few $i$, since we cyclically close the enumeration of the subsets $\Omega_i$ modulo $P$
to avoid any special treatment there.
The relevant parts of $A$ that belong to $\Omega_i$ are the full rows of $A$ with indices $j \in [ta_i,tb_i]$. They are stored in compressed row storage (CRS) format. But these full rows can also contain entries whose {\em column} index might belong to another processor, i.e. the column indices of such entries of the matrix need to be known to processor $i$ to be able to set up the full rows in the first place. Therefore, we store on each processor a map of the global indices which this processor does not own but which are geometric neighbors tied to an index on this processor due to the non-zero matrix entries of the respective rows. This information is then used to determine the non-zero entries of the complete rows associated to $\Omega_i$ in the next step and can be deleted after that.
It is not necessary to store the relevant parts of $R_i, i=1,\ldots,P$. The matrices $R^T_i,i=1,\ldots,P$, and the the corresponding restriction matrices $R_i, i=1,\ldots, P$, are implicitly available using the local part of $\Omega_i$ of the $sfc\_index$ vector, since $R_i^T$ is just the extension-by-zero map and $R_i$ the corresponding restriction.
Note here that the product of two matrices in CRS format can easily be stored in CRS format as well. 
This facilitates the setup of the coarse scale matrix. In this step, the $q$ rows of $A_0$ that belong to $\Omega_i$, i.e. those with the indices $j \in [(i-1)q+1, \dots,iq] $, are first locally generated. Then, to create the complete $A_0$ redundantly on all processors $i$, an all-to-all communication step is necessary.
We additionally need two types of communication: An overlap-based data exchange between processors $i$ and $i\pm 1, \ldots, i \pm \lceil \gamma \rceil$,
which are neighbors with respect to the space-filling curve ordering, for the update of the $R^T x^{k+1}$, and an $A$-based data exchange between processors 
for the parallel matrix-vector product $Ax^{k}$.
Note finally that we store on each processor the local part of $x$ and $b$, which belongs to $\Omega_i$ and not just to $\tilde \Omega_i$. This avoids one communication step of the overlap data, which would be necessary otherwise. Thus, in contrast to most conventional parallel domain decomposition implementations, we trade communication for (moderate) storage costs.

This linear two-level additive Schwarz iteration can also be used as a preconditioner for the conjugate gradient iteration, which results in a substantial improvement in convergence. In the symmetric case, an error reduction factor of $2(1-2/(1+\sqrt \kappa))$ per iteration step is then obtained in contrast to the reduction factor of $1-2/(1+\kappa)$ from (\ref{redrate}). Moreover a damping parameter is no longer necessary, since the respective two-level additive Schwarz operator from (\ref{addSW}) with the weights (\ref{omegai}) is merely used to improve the condition number and no longer needs to be convergent by itself.

Then the basic conjugate gradient iteration must additionally be implemented in parallel. This can easily be done in a way analogous to the overlapping Schwarz iteration above, involving a further  parallel vector product based on the overlapping decomposition $\{\Omega_i\}_{i=1}^P$. The details are obvious and are left to the reader. Note here again that, for general diagonal matrices $D_i$, the associated preconditioner is no longer symmetric, while it is in the case $D_i = \omega_i I$. This can cause both theoretical and practical problems for the conjugate gradient iteration. Then, instead of the conventional conjugate gradient method, we could resort to the {\em flexible} conjugate gradient method, which provable works also in the non-symmetric case, see  \cite{BDK15} and the references cited therein. But, as already shown in Lemma \ref{lemweig}, this issue is completely avoided 
with the choice $\gamma = \frac 1 2 n, n \in \mathbb{N}$.

\section{A fault-tolerant domain decomposition method}\label{sec:fault}

\subsection{Fault tolerance and randomized subspace correction}\label{sec:thft}
Now we will focus on algorithm-based fault tolerance. Usually, to obtain fault-tolerant parallel methods, techniques are employed which are  based on functional repetition and on check pointing. They are however prone to scaling issues, i.e. naive versions of conventional resilience techniques will not scale to the peta- or exascale regime, and more advanced techniques need to be devised. An overview of the state of the art  for resiliency in numerical algorithm design for extreme scale simulations is given in \cite{Agullo2020}. Depending on the PDE-problem to be treated, certain error-oblivious algorithms had been developed that can recover from errors without assistance as long as these errors do not occur too frequently. Besides time stepping methods with check pointing for parabolic and hyperbolic problems, various iterative linear methods and fixed-point solvers are able to execute to completion, especially in the setting of elliptic PDEs. To this end, the view of asynchronous iterative methods can be taken.
But to make, for example, a parallel multigrid or multilevel solver fault-tolerant is still a challenging task, see \cite{HuberGmeinerRuedeWohlmuth16, Stals2019AlgorithmbasedFR} for the two- and three-dimensional case. Furthermore, albeit numerical experiments often show good convergence and impressive fault-tolerance properties,
most asynchronous iterative methods are not just simple fixed-point iterations anymore and the development of a sound convergence theory for such algorithms is an issue.

For higher-dimensional time-dependent problems, fault mitigation on the level of the combination method has been experimentally tried for a linear advection equation in \cite{Harding14}, for the Lattice Boltzmann method and a solid fuel ignition problem in \cite{Ali2016ComplexSA}, and for a gyrokinetic electromagnetic plasma application in \cite{Pflueger.Bungartz.Griebel.ea.2014, ober17, Rentrop.Griebel.2020}, where existing (parallel) codes were used as black box solver for each of the subproblems and adaptive time stepping methods were employed. But again, the considered numerical approaches 
(Lax-Wendroff finite differences, the method of lines and Runge-Kutta 4th order in the GENE code, the two-step splitting procedure in the Taxila Lattice Boltzmann code) in general do not allow for a simple and clean convergence theory, nor do they allow for a Hilbert space structure due to the involved time and advection operators.  

However, if there is a direct Hilbert space structure as for elliptic problems, algorithm-based fault tolerance can be interpreted in the framework of
{\em stochastic subspace correction} algorithms for which in \cite{Griebel.Oswald:1995*1,Griebel.Oswald:2011, Griebel.Oswald:2016,Griebel.Oswald:2017*1} we recently developed a general theoretical foundation for their convergence rates in expectation. 
Indeed, for a conventional domain decomposition approach, we employed our stochastic subspace correction theory to show algorithm-based fault tolerance theoretically and in practice under independence assumptions for the random failure of subdomain solves in \cite{Griebel.Oswald:2019}.
The main idea is to switch from deterministic error reduction estimates of additive and multiplicative Schwarz methods as subspace splitting techniques to error contraction and thus to convergence {\em in expectation}. This way, convergence behavior and convergence rates can indeed be shown for certain iterative methods in a faulty environment, provided that specific assumptions on the occurrence and the distribution of faults and errors are fulfilled. 

To be precise, in \cite{Griebel.Oswald:2019}, we considered linear iterative, additive Schwarz methods for general stable space splittings, which we considered as subspace correction methods. For the setting of the two-level domain decomposition of (\ref{AS2}), the linear iteration reads as follows in our notation: In the $k$-th iteration step, a certain index set $I_k \subset \{0,1,\ldots,P\}$ is selected.  For each $i \in I_k$ the corresponding subproblem 
\begin{equation}\label{VPo}
	A_i d_i^{k} = R_i (b-A x^{k})
\end{equation} 
is solved for $d_i^{k}$, and an update of the form 
\begin{equation}\label{Rec0}
	x^{k+1}=x^{k}+\sum_{i \in I_{k} } \xi_{k,i} R_i^T d_i^{k} \quad \quad k=0,1,\ldots
\end{equation}
is performed. At the beginning we may set $x^{0}$ to zero without loss of generality. In the simplest case the relaxation parameters $ \xi_{k,i} $ are chosen independently of $x^{(k)}$ and we then obtain an, in general, non-stationary but linear iteration scheme. 
 The iteration (\ref{Rec0}) subsumes different standard algorithms such as the multiplicative (or sequential) Schwarz method,
where in each step a single subproblem (\ref{VPo}) is solved ($|I_k| =1$), the additive (or parallel) Schwarz method, where all $P+1$ subproblems are solved simultaneously ($I_k=\{0,1,\ldots,P\}$), and intermediate block-iterative schemes ($1<|I_k| <P+1$). Here and in the following, $|I|$ denotes
the cardinality of the index set $I$. The recursion (\ref{Rec0}) therefore represents a one-step iterative
method, i.e. only the current iterate $x^{k}$ needs to be available for the update step. 

In \cite{Griebel.Oswald:2019}, we specifically discussed stochastic versions of (\ref{Rec0}), where the sets $I_k$ are chosen {\em randomly}. To this end, we assumed that
\begin{itemize}
\item[{\bf A}] $I_k$ is a uniformly at random
chosen subset of size $p_k$ in $\{0,1,\ldots,P\}$, i.e., $|I_k|=p_k$ and  $\mathbb{P}(i\in I_m)=\mathbb{P}(i'\in I_k)$ for all $i,i'\in \{0,1,\ldots,P\}$. 
\item[{\bf B}] The choice of $I_k$ is independent for different $k$.
\end{itemize}
We then considered expectations of squared energy error norms for iterations with any fixed but arbitrary sequence $\{p_k\}$ and derived the following result:
\begin{theorem}\label{theo1}
	Let the relaxation parameters in (\ref{Rec0}) be given by $\xi_{k,i}:=\xi \omega_i$, ${i=0,1,\ldots,P,}$
where $0<\xi < 2/ \lambda_{\max}$. Furthermore let the random sets $I_k$ of size $p_k$ be selected in agreement with {\bf A} and let
{\bf B} hold. Then the algorithm (\ref{Rec0}) converges in expectation for any $x\in \mathbb{R}^N$ and
	\begin{equation}\label{ECC}
\mathbb{E}(\|x-x^{k}\|^2) \le \prod_{s=0}^{k-1}\left(1-\frac{ \lambda_{\max}~ \xi ~ (2-  \lambda_{\max}~ \xi) ~ p_s}{\kappa ~ (P+1)}\right) \|x-x^0\|^2,\qquad k=1,2,\ldots
	\end{equation}
where $\kappa:=\lambda_{\max}/\lambda_{\min}$ is the condition number of the underlying $\omega_i$-weighted decomposition.
\end{theorem}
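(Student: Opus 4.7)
The plan is a textbook stochastic-subspace-correction argument, carried out in the energy inner product $\langle u,v\rangle_A:=\langle Au,v\rangle$ with associated norm $\|\cdot\|_A$. Write the error as $e^{k}:=x-x^{k}$. Because $R_i(b-Ax^{k})=R_iAe^{k}$, the local correction in (\ref{VPo}) is $d_i^{k}=A_i^{-1}R_iAe^{k}$, so the operators $P_i:=R_i^TA_i^{-1}R_iA$ are $A$-self-adjoint projections, and the update (\ref{Rec0}) reads
\begin{equation*}
 e^{k+1}=\bigl(I-\xi T_{I_k}\bigr)e^{k},\qquad T_{I}:=\sum_{i\in I}\omega_i P_i.
\end{equation*}
The full additive operator $T:=T_{\{0,\dots,P\}}=C^{-1}_{(2),D}A$ is $A$-self-adjoint with spectrum in $[\lambda_{\min},\lambda_{\max}]$, and the standard stability identity $\langle Tv,v\rangle_A=\sum_{i=0}^P\omega_i\|P_iv\|_A^2$ gives the lower bound $\sum_{i=0}^P\omega_i\|P_iv\|_A^2\ge\lambda_{\min}\|v\|_A^2$.

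Next I would expand the energy norm of the error using $A$-self-adjointness of the projections:
\begin{equation*}
 \|e^{k+1}\|_A^2=\|e^{k}\|_A^2-2\xi\sum_{i\in I_k}\omega_i\|P_ie^{k}\|_A^2+\xi^{2}\|T_{I_k}e^{k}\|_A^2.
\end{equation*}
The key step is to control the quadratic term by the linear one. Since $T-T_{I_k}=\sum_{i\notin I_k}\omega_iP_i\succeq 0$ in the $A$-sense, we have $T_{I_k}\preceq T\preceq\lambda_{\max}I$, and the elementary inequality $S^{2}\preceq\lambda_{\max}(S)\,S$ for PSD self-adjoint $S$ yields
\begin{equation*}
 \|T_{I_k}e^{k}\|_A^2=\langle T_{I_k}^{2}e^{k},e^{k}\rangle_A\le\lambda_{\max}\langle T_{I_k}e^{k},e^{k}\rangle_A=\lambda_{\max}\sum_{i\in I_k}\omega_i\|P_ie^{k}\|_A^2.
\end{equation*}
Substituting, one obtains the pathwise bound
\begin{equation*}
 \|e^{k+1}\|_A^2\le\|e^{k}\|_A^2-\xi(2-\xi\lambda_{\max})\sum_{i\in I_k}\omega_i\|P_ie^{k}\|_A^2,
\end{equation*}
and the factor $\xi(2-\xi\lambda_{\max})$ is strictly positive by the hypothesis $0<\xi<2/\lambda_{\max}$.

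Now I would take conditional expectation given $e^{k}$. Assumption {\bf A} implies $\mathbb{P}(i\in I_k)=p_k/(P+1)$ uniformly in $i$, hence
\begin{equation*}
 \mathbb{E}\Bigl[\sum_{i\in I_k}\omega_i\|P_ie^{k}\|_A^2\,\Bigm|\,e^{k}\Bigr]=\frac{p_k}{P+1}\sum_{i=0}^{P}\omega_i\|P_ie^{k}\|_A^2\ge\frac{p_k\,\lambda_{\min}}{P+1}\|e^{k}\|_A^2.
\end{equation*}
Combining the last two displays, and rewriting $\lambda_{\min}=\lambda_{\max}/\kappa$, gives
\begin{equation*}
 \mathbb{E}\bigl[\|e^{k+1}\|_A^2\,\bigm|\,e^{k}\bigr]\le\left(1-\frac{\lambda_{\max}\,\xi\,(2-\lambda_{\max}\xi)\,p_k}{\kappa\,(P+1)}\right)\|e^{k}\|_A^2.
\end{equation*}
Finally, assumption {\bf B} together with the tower property of conditional expectation allows me to unroll the recursion over $s=0,\dots,k-1$, producing the asserted product bound (\ref{ECC}). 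The only real obstacle is the quadratic-term estimate; everything else is routine expansion, a probabilistic averaging step, and induction. A secondary technicality is checking that $T_{I_k}\preceq T$ in the $A$-sense, which is immediate once one notices that each $\omega_iP_i$ is $A$-PSD.
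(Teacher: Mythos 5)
Your proposal is correct, and it is essentially the argument the paper relies on: the paper does not prove Theorem \ref{theo1} in-text but defers to the general space-splitting proof in \cite{Griebel.Oswald:2019}, which proceeds exactly as you do — pathwise expansion of the energy error using the $A$-orthogonal projections $P_i$, the bound $\|T_{I_k}v\|_A^2\le\lambda_{\max}\langle T_{I_k}v,v\rangle_A$ via $0\preceq T_{I_k}\preceq T$, the inclusion probability $p_k/(P+1)$ from assumption {\bf A}, the lower spectral bound $\lambda_{\min}=\lambda_{\max}/\kappa$, and unrolling by the tower property using {\bf B}. Your version is simply the specialization of that abstract argument to the matrix setting, with no gaps.
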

Here $\|x\|:= \sqrt{x^T A x}$ denotes the discrete energy norm which is associated to the FEM-matrix $A$ of the discretized Laplace operator.
For a detailed proof in the case of general space splittings see \cite{Griebel.Oswald:2019}. 
Here the norm equivalency
\begin{equation}\label{NE}
	\lambda_{\min} ~ v^T C_{(2),\omega}  A v  \le v^TAv \le \lambda_{\max} ~ v^T C_{(2),\omega}  A v,\qquad v\in  \mathbb{R}^N
\end{equation}
has to hold with $0<\lambda_{\min}\le \lambda_{\max}\le \infty$ and positive
weights $\omega=\{\omega_i>0\}_{i=0,1,\ldots,P}$, where
we employ the $\omega$-weighted, symmetric,  additive Schwarz operator 
\begin{equation}\label{weigschwarz}
	C_{(2),\omega}^{-1}:=\sum_{i=0}^P \omega_i R_i^T A_i^{-1} R_i,
\end{equation}
compare (\ref{addSW}) with $D_i=\omega_i I$. Now any application of the linear iteration (\ref{Rec0}) with theoretical guarantees according to Theorem \ref{theo1} requires
knowledge of suitable weights $\omega_i$, and  an upper bound $\bar{\lambda}$ for the stability constant $\lambda_{\max}$ in order to choose the value of $\xi$, whereas information about $p_k$, i.e. the size of $I_k$, is not crucial. Numerical experiments for model problems with different values  $\xi\in  (0,2/\lambda_{\max})$ suggest that the iteration count is sensitive to the choice of $\xi$ and that overrelaxation often gives better results. The weights $\omega_i$ can be considered as scaling parameters that can be used to
improve the stability constants $\lambda_{\max}$, $\lambda_{\min}$, and thus the condition number $\kappa$ of the splitting.

The application of the above convergence estimate to fault tolerance will focus on the situation  
$$
1 << p_k\le p\le  P+1,
$$
where $p$ is the number of processors available in the compute network, and $p_k$ is a sequence of random integers denoting the number of correctly working processors in iteration $k$. 
For such a setting, the average reduction of the expectation of the squared error per iteration is approximately given by
\begin{equation}\label{faultrate}
\left(\prod_{s=0}^{k-1} \left(1-\frac{p_s}{(P+1) ~ \kappa}\right)\right)^{1/k}
\approx 1-\frac{\sum_{s=0}^{k-1} p_s}{k~(P+1)~\kappa} \approx
1-\frac{r_p}{\kappa},\qquad r_p:=\mathbb{E}(p_k)/(P+1),
\end{equation}
if we set $\xi=1/{\lambda}_{\max}$ and take a sufficiently large $k$.
The number $r_p$ can be interpreted as the average rate of subproblem solves
per iteration (\ref{Rec0}) and the linear dependence on $\kappa$ is what can be expected from a linear iteration in the best case.
Altogether, this convergence theory covers a stochastic version of Schwarz methods based on generic splittings, where
in each iteration a random subset of subproblem solves is used. On the one hand, this theory shows that randomized Schwarz iterative
methods are competitive with their deterministic counterparts. On the other hand, there are situations where randomness in the subproblem selection is naturally occurring and is not a matter of choice in the numerical method. An important example is given by
algorithm-based methods for achieving fault tolerance in large-scale distributed and parallel computing applications. 

This theory was already successfully applied to two different domain decomposition methods in a faulty environment, see \cite{Griebel.Oswald:2019} for further details. There, concerning the nature of faults, we assumed that faults are detectable and represent subproblem solves that were unreturned or declared as incorrect, i.e., we ignored soft errors such as bit flips in floating point numbers even if they were detectable. Moreover we assumed that the occurrence of a fault is not related to some load imbalance, i.e., slightly longer execution or communication times for a particular subproblem solve do not increase the chance of declaring such a process as faulty.  
Furthermore it should not matter if faults are due to node crashes or communication failures, nor did we pose any restrictions on spatial patterns (which and how many nodes fail) or temporal correlations of faults (distribution of starting points and idle times
of failing nodes). 
Then meaningful convergence results under such a weak fault model follow almost directly from the results above, as long as we can select, uniformly at random and independently of previous iterations, $p$ subproblems out of the $P+1$ available ones at the start of each iteration, assign them in a one-to-one fashion to the $p$ nodes,
and send the necessary data and instructions for processing the assigned subproblem solve to each of the $p$ nodes.
Indeed, if the time available for a solve step is tuned such that there is no correlation between faults and individual subproblem solves,
then one can safely assume that, with $f_k$ denoting the number of faulty subproblem solves, the index set $I_k$ corresponding to the $p_k=p-f_k$ subproblem solutions detected as non-faulty at the end of a cycle is still a uniformly at random chosen subset of $\{0,1,\ldots,P\}$ that is independent of the index sets $I_0,\ldots,I_{k-1}$ used in the updates of the previous iterations. The latter independence property is the consequence of our scheme of randomly assigning subproblems to processor nodes, and not an assumption on the fault model.
Thus Theorem \ref{theo1} applies and yields the estimate 
\begin{equation}\label{ECm}
\mathbb{E}(\|x-x^{k}\|^2) \le \prod_{s=0}^{k-1}\left(1-\frac{p_s}{\kappa (P+1)}\right) \|x-x^0\|^2,\qquad k=1,2,\ldots,
\end{equation}
for the expected squared error if we formally set $\xi={\lambda}^{-1}_{\max}$. 

In \cite{Griebel.Oswald:2019}, this approach was first applied for a simple manager-worker network example with a reliable manager node that possesses enough storage capacity to keep all necessary data, and a fixed number $p$ of unreliable worker nodes, which perform the calculations. Here communication only takes place between the manager and worker nodes, but not between worker nodes. There, the $P$ subproblems and also the coarse scale problem are randomly assigned to one of the available worker nodes for each iteration, the worker node receives the necessary data from the manager node for the corresponding subproblem, solves it and sends the solution back to the manager node. For simplicity $p$ was set to $P+1$. For example, a constant failure rate $r_f$ then results in a constant $p^*=p_k=\lfloor (1-r_f)(P+1)\rfloor$ and in each iteration a fixed number $f^* =P+1-p^*$ of compute nodes fail to return correct subproblem solutions, which finally means that the index set $I_k$ in the iteration was selected as a random subset of size $p^*$ from $\{0,1,\ldots,P\}$ and our assumptions in Theorem \ref{theo1} are indeed fulfilled.

Subsequently, our theory was also employed to a more general local communication network with fixed assignment of the subproblems $i=1,\ldots,P$ to $P$ unreliable compute nodes, decentralized parallel data storage with local redundancy, and treatment of the coarse scale problem on an additional reliable server node. Moreover communication was possible between the server node and the compute nodes but now also between geometrically neighboring compute nodes. Altogether, there were thus $p=P+1$ nodes available for the computation. Moreover the data of the subdomain problem associated to a processor were also redundantly stored on a fixed number of neighboring processors. This allowed, in case of a fault occurring on a processor, to proceed with the associated computations on one of these neighboring processors while delaying the computations of the neighboring processor. This behavior of the iterative method then could again be matched to our convergence theory with minor modifications, compare Corollary 1 in \cite{Griebel.Oswald:2019}. 

The deterioration of the convergence rate with the condition number $\kappa$ of the associated weighted splitting in (\ref{weigschwarz}) is typical for one-step iterations such as (\ref{Rec0}). Note that the convergence rate can be improved to a dependence on only $\sqrt \kappa$ rather than on $\kappa$ by using multi-step strategies. This was indeed shown in \cite{Griebel.Oswald:2019} for a two-step variant of the basic linear iteration. Note however that there is presently no similar theory for the conjugate gradient method with a stochastic two-level Schwarz preconditioner. Moreover we are presently not aware of simple, tight estimates for the values ${\lambda}_{\min}$ and ${\lambda}_{\max}$ that are associated to our specific splitting from Subsection \ref{ssect} of overlapping subdomains based on space-filling curves in the case of discretizations for general level parameters $l=(l_1,\ldots,l_d)$ and a coarse scale problem associated to agglomeration like (\ref{restr}). This is future work. 
In the following, we nevertheless apply our algorithm and report its behavior. 

\subsection{Our fault-tolerant algorithm}\label{ssfault}
Now we discuss the main features of our two-level domain decomposition algorithm based on space-filling curves in a faulty environment. The main idea is again to exploit redundancy to recover data in case of a fault and to resume computation. This redundancy is now provided by means of the $\gamma$-overlap, which is present in our construction of overlapping domains $\Omega_i$ based on space-filling curves. Furthermore recall that we employ the Bank-Holst paradigm, i.e. the coarse scale problem is stored and treated redundantly on each of the $P$ processors together with the respective subdomain problem. In the linear iteration, it interacts with the respective subproblem in an additive, parallel way, i.e. we solve the global coarse scale problem {\em and} the local fine subdomain problem independently of each other on one processor. The coarse scale problem is thus completely protected against faults due to this redundancy, i.e. as long as there is at least one processor running, it is always solved.
Thus, for reasons of simplicity, we focus on failures in the computation of the local subproblems and just record if a processor fails within its local subproblem solver in Algorithm \ref{Alg:DDMAlgorithm2}. In case of a fault of a certain processor we have to recover the necessary data of the associated subproblem from some other processor, which is a $\gamma$-dependent neighbor with respect to the one-dimensional space-filling curve ordering of the subdomains. This way, we can proceed with the iteration albeit delaying the corrections of faulty processors to a certain extent. Again, our above theory can be modified to cover this situation for the linear iteration case provided that faults are detectable, represent subproblem solves that were declared as unsuccessful or incorrect, and are happening independently of each other. We then have the following result:
\begin{corollary}\label{cor1} 
Let $I^1=\{0\}$ and $I^2=\{1,\ldots,P\}$. 
For fixed $x^{k}$, let  $x^{k+1}$ be given by (\ref{Rec0}) with $I_k=I^1\cup I_k^2$, where the
	$I^2_k$ are uniformly at random selected subsets of $I^2$ of size $p_k \le |I^2|$. Let furthermore $\xi_{k,i} := \xi_k \omega_i, i=0,\ldots,P$.
Then taking $\xi_k=p_k/(P~ \lambda_{\max})$ yields the estimate
	\begin{equation}\label{CorEneu}
		\mathbb{E}(\|x-x^{k+1}\|^2|x^{k}) \le \left(1-\frac{p_k^2}{\kappa~ P^2} \right) \|x-x^{k}\|^2.
	\end{equation}
\end{corollary}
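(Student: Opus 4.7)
My plan is to adapt the proof of Theorem \ref{theo1} to the asymmetric situation in which index $0$ is always contained in $I_k$ while the remaining $p_k$ indices form a uniform random subset of $I^2$. Setting $e^k := x - x^k$, $T_i := \omega_i R_i^T A_i^{-1} R_i A$, and $S_k := \sum_{i \in I_k} T_i$, the recursion (\ref{Rec0}) becomes $e^{k+1} = (I - \xi_k S_k) e^k$. Each $T_i$ is $A$-self-adjoint and $A$-positive semi-definite, hence so is $S_k$, as well as the full weighted additive Schwarz operator $T := \sum_{i=0}^P T_i$, which by (\ref{NE}) has $A$-spectrum contained in $[\lambda_{\min}, \lambda_{\max}]$.

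Expanding the energy norm and taking the conditional expectation given $x^k$ yields
\begin{equation*}
\mathbb{E}(\|e^{k+1}\|^2 \mid x^k) = \|e^k\|^2 - 2\xi_k \langle \mathbb{E}(S_k\mid x^k) e^k, e^k\rangle_A + \xi_k^2 \, \mathbb{E}(\|S_k e^k\|^2 \mid x^k).
\end{equation*}
Since $0 \in I_k$ deterministically while each $i \in \{1,\ldots,P\}$ lies in $I_k^2$ with probability $p_k/P$, one finds $\mathbb{E}(S_k\mid x^k) = T_0 + (p_k/P)\sum_{i=1}^P T_i = (1-p_k/P)T_0 + (p_k/P)T \ge (p_k/P) T$ as $A$-self-adjoint operators, producing the first-order bound $\langle \mathbb{E}(S_k\mid x^k) e^k, e^k\rangle_A \ge (p_k/P)\lambda_{\min}\|e^k\|^2$. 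For the quadratic term, I would use the operator inequality $S_k \le T$ (their difference $\sum_{i\notin I_k}T_i$ is $A$-positive), which combined with the standard positive-operator identity $\|S_k v\|^2 = \langle S_k^2 v, v\rangle_A \le \lambda_{\max}(S_k)\langle S_k v, v\rangle_A$ yields the pathwise bound $\|S_k e^k\|^2 \le \lambda_{\max}\langle S_k e^k, e^k\rangle_A$; averaging then gives $\mathbb{E}(\|S_k e^k\|^2\mid x^k) \le \lambda_{\max}\langle \mathbb{E}(S_k\mid x^k) e^k, e^k\rangle_A$. Substituting both estimates into the identity above produces
\begin{equation*}
\mathbb{E}(\|e^{k+1}\|^2\mid x^k) \le \|e^k\|^2 - \xi_k(2-\xi_k\lambda_{\max})\langle \mathbb{E}(S_k\mid x^k) e^k, e^k\rangle_A,
\end{equation*}
and with $\xi_k = p_k/(P\lambda_{\max})$ the factor $(2-\xi_k\lambda_{\max}) = 2 - p_k/P \ge 1$ may simply be dropped, so that the first-order lower bound delivers the asserted contraction rate $1 - p_k^2/(\kappa P^2)$.

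The step I expect to require the most care is the treatment of the quadratic term $\mathbb{E}(\|S_k e^k\|^2\mid x^k)$: a crude replacement by $\lambda_{\max}^2 \|e^k\|^2$ would sever the link to $\langle \mathbb{E}(S_k\mid x^k)e^k, e^k\rangle_A$ and destroy the quadratic $p_k^2$ scaling that constitutes the improvement of Corollary \ref{cor1} over a direct specialization of Theorem \ref{theo1}. Preserving this coupling is what allows the deterministic presence of the coarse correction ($0 \in I_k$) together with the adaptive step size $\xi_k \propto p_k$ to be converted into the improved quadratic dependence on $p_k/P$.
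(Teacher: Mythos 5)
Your proof is correct. Note that the paper does not actually write out a proof of Corollary \ref{cor1}: it is justified only by reference to Corollary 1 and the bound (32) in \cite{Griebel.Oswald:2019}. What you have produced is a faithful, self-contained reconstruction of that argument, adapted to the asymmetric index set $I_k=\{0\}\cup I_k^2$: the energy-norm expansion of $e^{k+1}=(I-\xi_k S_k)e^k$, the computation $\mathbb{E}(S_k\mid x^k)=T_0+\tfrac{p_k}{P}\sum_{i=1}^P T_i\ge \tfrac{p_k}{P}T$, and, crucially, the pathwise bound $\|S_ke^k\|^2\le\lambda_{\max}\langle S_ke^k,e^k\rangle_A$ (valid since $0\le S_k\le T$ in the $A$-sense), which keeps the quadratic term coupled to the first-order term and is exactly what preserves the $p_k^2$ scaling. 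Substituting $\xi_k=p_k/(P\lambda_{\max})$ then gives $1-(2-\tfrac{p_k}{P})\tfrac{p_k^2}{\kappa P^2}\le 1-\tfrac{p_k^2}{\kappa P^2}$, so dropping the factor $2-p_k/P\ge1$ is legitimate and in fact shows your intermediate bound is marginally sharper than the one stated. Your closing remark correctly identifies the step where a cruder estimate (replacing $\mathbb{E}(\|S_ke^k\|^2\mid x^k)$ by $\lambda_{\max}^2\|e^k\|^2$) would destroy the result; this matches the structure of the argument in \cite{Griebel.Oswald:2019}, so there is no substantive divergence from the intended proof.
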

This is a simple instance of the result in Corollary 1 in \cite{Griebel.Oswald:2019} for the local communication network adapted to our specific situation, see also the bound (32) in \cite{Griebel.Oswald:2019}. Here 
$\mathbb{E}(\|x-x^{k+1}\|^2|x^{k})$ denotes the expectation of $\|x-x^{k+1}\|^2$ conditioned on $x^{k}$.
If no faults occur in the $k$-th iteration, then the contraction rate stays $1-1/\kappa$ of course.

If we now assume, as it is often done in the literature, that a faulty processor in iteration $k$ comes back to life (or is replaced by a new processor) in a relatively short time and thus is active again in the next iteration, and if we assume that faults occur independently of each other, we obtain the following result for our algorithm involving the Bank-Holst paradigm:
\begin{theorem}\label{theo2}
	Let $\xi_{k,i} := \xi_k \omega_i, i=0,\ldots,P$ where $0< \xi_k <2/\lambda_{\max}$. Furthermore let $I^1=\{0\}$ and $I^2=\{1,\ldots,P\}$ and let $I_k=I^1\cup I_k^2$, where in each iteration $k$ of (\ref{Rec0}) the
$I^2_k$ are uniformly at random selected subsets of $I^2$ of size $p_k \le |I^2|$, i.e. let  $I^2_k$ be selected in agreement with {\bf A} and {\bf B}. 
Then the algorithm (\ref{Rec0}) converges in expectation for any $x\in \mathbb{R}^N$ with
	\begin{equation}\label{ECCBH}
\mathbb{E}(\|x-x^{k}\|^2) \le \prod_{s=0}^{k-1}\left(1-\frac{ p_s^2 }{\kappa ~ P^2}\right) \|x-x^0\|^2,\qquad k=1,2,\ldots
	\end{equation}
where $\kappa:=\lambda_{\max}/\lambda_{\min}$ is the condition number of the underlying $\omega$-weighted decomposition.
\end{theorem}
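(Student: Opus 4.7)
The plan is to derive Theorem \ref{theo2} by iterating the one-step conditional contraction bound of Corollary \ref{cor1} and then invoking the tower property of conditional expectation, with Assumption \textbf{B} playing the decisive role of decoupling iteration $s$ from the past. First, I would fix an iteration index $s$ and apply Corollary \ref{cor1} with the (admissible) choice $\xi_s = p_s/(P\lambda_{\max})$, which lies in $(0,2/\lambda_{\max})$ since $p_s \le P$. This yields the one-step conditional bound
$$\mathbb{E}\bigl(\|x-x^{s+1}\|^2 \,\bigm|\, x^s\bigr) \le \left(1 - \frac{p_s^2}{\kappa P^2}\right)\|x-x^s\|^2.$$

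Next, I would justify why this conditional bound actually applies pathwise to the full iteration. Since the iterate $x^s$ is a deterministic function of the initial iterate $x^0$ and the history $I_0^2,\ldots,I_{s-1}^2$, and since by Assumption \textbf{B} the random set $I_s^2$ is independent of $I_0^2,\ldots,I_{s-1}^2$, the conditional law of $I_s^2$ given $x^s$ coincides with its unconditional law specified in \textbf{A}. Thus Corollary \ref{cor1}, which was stated for fixed $x^{k}$, is indeed applicable with $x^k$ replaced by the random iterate $x^s$. Taking total expectations via the tower property then gives
$$\mathbb{E}(\|x-x^{s+1}\|^2) \le \left(1-\frac{p_s^2}{\kappa P^2}\right)\mathbb{E}(\|x-x^s\|^2),$$
and iterating this inequality from $s=0$ to $s=k-1$ produces the product bound (\ref{ECCBH}), with convergence in expectation following from the fact that each factor is strictly less than one whenever $p_s \ge 1$.

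I do not anticipate a serious technical obstacle: the result is essentially a direct transcription of the proof scheme that turns the one-step contraction of the additive Schwarz iteration into the multi-step estimate (\ref{ECC}) of Theorem \ref{theo1}, adapted to the Bank--Holst setting in which index $0$ is always present in $I_k$. The one subtle point to verify carefully is the conditional/unconditional distinction in the first step, which is exactly where Assumption \textbf{B} is consumed. The structural reason why the contraction factor becomes $p_s^2/(\kappa P^2)$ rather than the $p_s/(\kappa (P+1))$ of (\ref{ECC}) is also worth commenting on in the write-up: the deterministic inclusion of the coarse index $0$ removes the $1/(P+1)$ sampling loss for that component, while the data-adapted choice $\xi_s = p_s/(P\lambda_{\max})$ contributes the additional $p_s/P$ factor through the Lyapunov-type estimate underlying Corollary \ref{cor1}.
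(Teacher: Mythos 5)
Your proposal is correct and follows exactly the route the paper intends: the paper states Theorem \ref{theo2} as the direct consequence of iterating the one-step conditional bound of Corollary \ref{cor1} under assumptions \textbf{A} and \textbf{B}, which is precisely your tower-property argument with the choice $\xi_s = p_s/(P\lambda_{\max})$. Your added care about why the conditional law of $I_s^2$ given $x^s$ equals its unconditional law is the one step the paper leaves implicit, and you handle it correctly.
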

Now, for our setting, the average reduction of the expectation of the squared error per iteration is approximately given by
$$
\left(\prod_{s=0}^{k-1} \left(1-\frac{p_s^2}{(  \kappa ~ P^2)}\right)\right)^{1/k}
\approx 1-\frac{\sum_{s=0}^{k-1} p_s^2}{k~P^2~\kappa} \approx
1-\frac{\hat r_p}{\kappa},\qquad \hat r_p:=\mathbb{E}(p_k^2)/P^2,
$$
if we take $k$ to be sufficiently large. 
Compared to $r_p$ in (\ref{faultrate}) we have lost the simple interpretation as the average rate of subproblem solves per iteration. The squaring of the value $p_k/P$ in the expectation is due to the imbalance of fault probabilities between the coarse problem (never faulty) and fine level local problems (potentially faulty)
in comparison to those in Corollary 1 of \cite{Griebel.Oswald:2019}, which no longer allows to reduce the fraction and thus gives a slightly weaker upper bound. 

Altogether, we now have an estimate for the conditional expectation of the squared error in one iteration with possible faults for some of the $P$ processors and thus an asymptotic convergence theory for our specific additive, two-level domain decomposition method based on space filling curves for the faulty situation which involves the Bank-Holst approach. This is under the assumption that we have a good upper estimate of the value $\lambda_{\max}$ of the associated Schwarz operator, i.e. the associated splitting, and that we have independent occurrences of faults in each iteration and also independent occurrences between two different iterations. Note here that this gives -- as already discussed in Theorem \ref{theo1} -- not necessarily an optimal rate as for example for the non-faulty iteration method in (\ref{redrate}) with optimal damping parameter, but merely an upper bound. It nevertheless shows that the asymptotic convergence rate depends on $\kappa$ in a linear fashion, which is as good as we can hope for with a linear iterative method after all. 

We now describe the details of the fault-tolerant version of our domain decomposition method of Algorithm \ref{Alg:DDMAlgorithm} and \ref{Alg:DDMAlgorithm2}.
At the beginning, we assume to have $P$ processors available in our compute system, where each processor will be assigned to treat one of the $P$ subdomain problems (plus the redundant coarse scale problem) in our parallel domain decomposition method. This way, subdomain $i$ is uniquely associated to the processor with number $proc_i$.
Furthermore we have two different components in our approach, namely the failure process and the reconstruction process, which can be treated independently of each other.

In the {\em failure process} we decide which of the $P$ processors fail and which stay active. 
Here, for reasons of simplicity, we focus on failures in the computation of the local subproblems. 
Note that if a fault would occur during another part of the overall algorithm, for example in the coarse scale problem or in the computation of the residual, the data can be recovered directly. This is due to the redundancy of the coarse scale problem with the Bank-Holst approach in the first case and due to the redundancy induced by the overlap and the global nature of the involved operations in the second case. 
This is different for the treatment of the local subproblems in our domain decomposition approach.
In the following we measure the failures in terms of cycles. To this end, we define one cycle as one application of the additive Schwarz operator. 
For each processor we assume that the processor fails in a {\em random} fashion or successfully completes this cycle.  Once a processor is discovered to be faulty, its local subproblem will not contribute to the iteration for that cycle. 

Furthermore we assume that a faulty processor will be instantly repaired and is available again for the next iteration, i.e. a processor only stays faulty for the iteration in which the fault was detected. Indeed, that faulty processors come back to life relatively quickly or are quickly replaced is a common assumption in many algorithm-based fault-tolerant methods, see e.g. \cite{Harding14,Rentrop.Griebel.2020,ober17, Pflueger.Bungartz.Griebel.ea.2014}. Note here that this does in principle not exclude longer fault times of a processor since it may happen that the same processor is faulty again in the next iteration. But this happens with the same probability $p_\text{fault}$ in each iteration, which results in the much smaller overall probability $p_\text{fault}^s$ for a processor to be faulty for $s$ successive iterations. This way, we ensure the independence of faults between iterations as required for Corollary \ref{cor1} and Theorem \ref{theo2}. The more realistic scenario of a longer lasting fault is not directly treated by our theory, but, nevertheless, our assumptions and thus our theory still give a crude worst case bound for such cases by adjusting the failure rate $p_\text{fault}$ accordingly.

To simulate the failures of processors described above in our real compute system, we do the following:
In each cycle we determine the faulty processors via $P$ independently drawn 
Bernoulli-distributed random variables
with fault probability $p_\text{fault}$, which is the same across all processors.

Once a failed processor $proc_i$ returns to the computation in the next cycle, we first employ the {\em reconstruction process} to restore its local data. To this end, we reassign the interval limits of processor $proc_i$. This can be done without problem since, even as $proc_i$ has failed and its data is lost, its interval information can be taken from any other running processor since we stored the index vector $sfc\_index$ and the interval limits $ta_i,tb_i, \tilde ta_i,\tilde tb_i$ redundantly on all processors.
The processor $proc_i$ then locally recalculates his part of the index vector, 
i.e.\ it executes steps (4)-(6) of Algorithm \ref{Alg:DDMAlgorithm} with the input data of processor $proc_i$.
Consequently, processor $proc_i$ is aware which other processor(s) possess(es) indices which correspond to its lost local data. There exists at least one such healthy processor if $\gamma \geq 0.5$, since this ensures that at least two processors cover each index. These other processors then also possess, due to the overlapping nature of the local subproblem, the rows of $A$ as well as the entries of all vectors (iterate, residual, etc.) that correspond to the data lost by processor $proc_i$. 
Here we always choose the first processor that is not $proc_i$ along the space-filling curve ordering on which the required data is available.
Additionally, the redundant coarse level problem, in particular the matrices $R_0, P_0$ and $A_0$, can be fetched from any living processor since it is stored redundantly on all of them. Alternatively, it can be locally recalculated as in step (10) of Algorithm \ref{Alg:DDMAlgorithm} to avoid communication.

Altogether, we obtain the failure and recovery process as given in Algorithms \ref{Alg:DDMAlgorithm3} and \ref{Alg:DDMAlgorithm4}.
\begin{algorithm}[hbt!]
\caption{Overlapping two-level additive Schwarz using space-filling curves: Failure Cycle.}
\label{Alg:DDMAlgorithm3}
\begin{algorithmic}[1]
\Statex \textit{Input:} number of subproblems $P$, cycle number $k$
\Statex
\AlgOn{all processors $proc_i$ independently}
\State Determine if $proc_i$ has failed in cycle $k$.
\If{$proc_i$ has failed in cycle $k$}
\State Remove $proc_i$ from the computation for cycle $k$.
\ElsIf{$proc_i$ has not failed in cycle $k$ but has failed in cycle $k-1$}
\State \textit{Reconstruction} of the data of processor $proc_i$.
\EndIf
\AlgEndOn
\end{algorithmic}
\end{algorithm}

The determination of failures in Line 2 of Algorithm~\ref{Alg:DDMAlgorithm3} will simply be done by drawing independent random numbers from a Bernoulli distribution, i.e. $X \sim Bernoulli(p_\text{fault})$
with parameter $p_\text{fault}\in[0,1]$.
Moreover line 4 in Algorithm~\ref{Alg:DDMAlgorithm3} in the context of our DDM Algorithm~\ref{Alg:DDMAlgorithm2} essentially boils down to setting the local subproblem update $d^k_i$ to zero if the processor corresponding to subproblem $i$ failed in the current iteration $k$.
\begin{algorithm}[hbt!]
\caption{Overlapping two-level additive Schwarz using space-filling curves: Reconstruction.}
\label{Alg:DDMAlgorithm4}
\begin{algorithmic}[1]
\Statex \textit{Input:}  processor $proc_i$
\Statex
\AlgOn{processor $proc_i$}
	\State \parbox[t] {\dimexpr\linewidth-\algorithmicindent}{Recover interval bounds from a processor $proc_s \ne proc_i$ and \\ 
	recompute locally owned indices.\strut}
\For{locally owned indices $j=ta_i,\ldots,tb_i$}
    \State Find some processor $proc_t \ne proc_i$ which also owns index $j$.
    \State \parbox[t]{\dimexpr\linewidth-\algorithmicindent}{Copy necessary data (vector entries, matrix rows) corresponding to $j$ from \\
	processor $proc_t$ to processor $proc_i$.\strut}
\EndFor
\AlgEndOn
\end{algorithmic}
\end{algorithm}

Here the following remarks are in order: 
We employ the Bernoulli distribution which directly gives the independence of faults between two different iterations and thus fulfills the prerequisites of Corollary \ref{cor1} and Theorem \ref{theo2}. 
This is in contrast to the usual assumption on the temporal distribution of faults  
where a Weibull distribution for the failure arrival times is used instead, see \cite{Harding14,Ali2016ComplexSA,PAS2014,Rentrop.Griebel.2020}.
The usage of a Weibull distribution stems from 
the observations in \cite{SG10}. There, in 2010, failure data had been collected over 9 years at Los Alamos National Laboratory, which include about 23.000 failures recorded on more than 20 different compute systems. The subsequent analysis of inter-arrival times of failures then gave a good agreement with the Weibull distribution, especially when the system was already in production for some time and the processors had ''burnt in''. Not so much is known about the length of failures of a processor (it could be Weibull-, Gamma- or even lognormally-distributed). More recent studies can be found in \cite{YWWYZ12, JYS19}. There however, the picture is much less clear. Furthermore the data of fault distributions for current, substantially larger compute systems are not publicly known. 

Note that a Weibull distribution is not memoryless and involves dependencies of the fault time points. Thus the prerequisites of our Corollary \ref{cor1} and our Theorem \ref{theo2} are not fulfilled for faults with inter-arrival times according to a Weibull distribution. In contrast, the use of a Bernoulli distribution for the failures adheres to the prerequisites of our Corollary \ref{cor1} and our Theorem \ref{theo2} and allows proven convergence rates in the faulty situation. If the true fault distribution would indeed be Weibull-distributed, our approach using a Bernoulli distribution nevertheless gives a crude upper bound on the convergence rate of a fault-tolerant method, since it can be seen as a worst case scenario (but with existing convergence theory at least for the linear iteration case, while for fault-tolerant conjugate gradient iterations no convergence theory is presently available anyway). Moreover the time scale for the occurrence of faults is unrealistically large in most Weibull-based fault models. For more realistic fault values (like one fault a day on a big compute system) we would not see much difference at all, as the occurrence of faults during the relatively short computation time needed in our parallel domain decomposition method for any subdomain problem within the sparse grid combination method is scarce in case of an elliptic problem. The benefit of the fault-tolerant repair mechanism will rather be visible for time-dependent parabolic problems with long time horizons and long compute times, where elliptic DDM solvers for the subproblems arising in the combination method are invoked in each time step of an implicit time discretization method. But even then the number of faults during a long time simulation is rather small and there is thus not much difference between the choice of a Bernoulli or a Weibull distribution for the fault model. In any case, the Bernoulli choice gives an upper bound for the convergence and the usage of a Weibull distribution can only lead to better results in practice.

\section{Numerical experiments}\label{sec:numer}
\subsection{Model problem}
We will consider the elliptic diffusion-type model problem 
$$
- \nabla \alpha ({\bf x}) \nabla u({\bf x}) = f({\bf x}) \quad \mbox{ in } \Omega=[0,1]^{d} 
$$
with right hand side $f({\bf x})$ and appropriate boundary conditions on $\partial \Omega$.
Since we are merely interested in the convergence behavior, the scaling properties and the fault tolerance quality of our approach and not so much in the solution itself, we resort to the simple
Laplace problem, i.e. we set $\alpha= I$, $f=0$, and employ zero Dirichlet boundary conditions. Consequently, the solution is zero as well. For the discretization we employ finite differences on a grid with level parameter $l=(l_1,\ldots,l_d)$, which leads to $N$ interior grid points and thus $N$ degrees of freedom, compare (\ref{dof}), and which results in the associated matrix $A$.  Now any approximation $x^k$ during an iterative process directly gives the respective error in each iteration. We measure the error in the discrete energy norm associated to the matrix $A$ that stems from the finite difference discretization , i.e. we track 
$$\lVert x^k\rVert_A := \sqrt{(x^k)^TAx^k}$$
for each iteration of the considered methods. Note here that we run the iterative algorithms for the symmetrically transformed linear system ${\hat A \hat x= \hat b}$ with ${\hat A=T^TAT}$, ${\hat b= T^Tb}$, ${\hat x=T^{-1}x}$ and ${T= diag(A)^{-1/2}}$,  whereas we measure the error in the untransformed representation, i.e. for $x-x^k$. 
For the initial iterate $x^0$ we uniformly at random select the entries ${\tilde x_i^0, i=1\ldots,N,}$ of $\tilde x^0 $ from $[-1,1]$ and rescale them via $x_i^0 := \tilde x_i^0/\lVert\tilde x^0\rVert_A$ such that $\lVert x^0\rVert_A=1$ holds. To this end, we employed the routine $uniform\_real\_distribution$ of the C++ STL (Standard Template Library).
We then run our different methods until an error reduction of the initial error $x^0$ by at least a factor $10^{-8}$ is obtained and record the necessary number $K$ of iterations. 
We employ two types of convergence measures: First, we consider the average convergence rate $\rho^{ave} =( \lVert x^K\rVert_A/ \lVert x^0\rVert_A)^{1/K}$, which contains both a fast initial error reduction due to smoothing effects of the employed domain decomposition iteration on the highly oscillating random initial error in the first iterations
and the asymptotic linear error reduction later on. Secondly, we consider the asymptotic convergence rate $\rho^{asy}$. To this end, we use the maximum of the last 5 iterations and the last 5 percent of the iterations, i.e. we set $\tilde K := \max( 5, \lceil 0.05 \cdot K\rceil)$ and define $\rho^{asy}:=( \lVert x^K\rVert_A/ \lVert x^{K-\tilde K}\rVert_A)^{1/\tilde K}$ which gives the average convergence rate over the last $\tilde K$ iterations. 
Note that we usually have $\rho^{ave} \leq \rho^{asy}$. The quotient $\rho^{ave} / \rho^{asy}$ reflects the influence of the preasymptotic regime on the convergence. If the quotient is close to one, we have an almost linear decay of the iteration error starting from the very beginning, whereas, if the quotient is much smaller than one, we have a large and fast preasymptotic regime before the linear asymptotic error reduction sets in. Note furthermore that instead of $\rho^{ave}$ we may just give the necessary number $K$ of iterations. It bears the same amount of information as $\rho^{ave}$, since ${K=\lceil \log(10^{-8})/\log(\rho^{ave})\rceil}$. Note finally that, instead of the random initial iterate and zero right hand side, we could have chosen zero as initial guess and a right hand side which results from the application of the discrete Laplacian to an a priorily given non-zero solution. 
We then would converge to the discrete solution instead of zero. 
This approach might be more suitable from a practical point of view, since it eliminates the randomness of the initial guess altogether and also eliminates certain smoothing effects of our algorithms in the first few iterations (as discussed in more detail later on). Therefore, it almost completely eliminates the corresponding preasymptotic regime in the convergence and directly gives the asymptotic properties of our algorithms. 
However, picking a sufficiently general smooth solution in an arbitrary dimension is not an easy task and it may even happen that, for a too simple solution candidate, 
the convergence may look deceptively good. For our choice of a random initial guess we will observe the strongest influence of the preasymptotic regime in any case. 

In the following, we present the results of our numerical experiments. First, in Subsection \ref{subcp}, we study the convergence and parallel scaling behavior of both the 
linear two-level Schwarz/Richardson-type iteration as given in Algorithm \ref{Alg:DDMAlgorithm2} and the associated preconditioned conjugate gradient method. Then, in Subsection \ref{subft}, we report on the behavior of the fault-tolerant version of these two iterative methods in the presence of faults. 

All calculations have been performed on the parallel system 
{\em Drachenfels} of the Fraunhofer Institute for Algorithms and Scientific Computing (SCAI).
It provides, among others, $1.824$ Intel Sandy Bridge cores on $114$ compute nodes, each one with $2$ Xeon E5-2660 processors ($8$ cores per CPU, disabled SMT) at $2.20$ GHz and $32$ GB RAM, i.e. 2GB RAM per core,
and  $2.272$ Ivy Bridge (Xeon E5-2650 v2) cores on $142$ compute nodes, each one with $2$ Xeon E5-2650 processors ($8$ cores per CPU, disabled SMT) at $2.60$ GHz and $64$ GB RAM, i.e. 4GB RAM per core. Drachenfels is equipped with a Mellanox Technologies MT27500 Family [ConnectX-3] 4x (56Gbps) connection. Altogether, we may use $4.096$ cores on this system but we restricted ourselves to $256$ cores and thus subdomains in our experiments for practical reasons.

\subsection{Convergence and parallelization results}\label{subcp}

We are initially interested in the weak scaling situation, i.e. we set 
$$N:= 2^S P,$$
where $S$ denotes the weak scaling parameter. This results in the size $2^S$ for each subproblem, whereas $P$ denotes the number of subproblems. Thus the size of each subproblem stays fixed for rising $P$ and the overall number $N$ of unknowns grows linearly with $P$.
Before we turn to the results of our numerical experiments, let us shortly put this weak scaling situation into the conventional geometric perspective:
There, with the isotropic fine scale mesh width $h\approx N^{-1/d}$ and the isotropic coarse grid mesh size $H\approx (q\cdot P)^{-1/d}$, we get
\begin{equation}\label{Hh}
\frac H h \approx \left( \frac {q\cdot P}{N}\right)^{-1/d}= \left(q \cdot 2^{-S} \right)^{-1/d},
\end{equation}
which is independent of $P$.
From conventional domain decomposition theory we have (for geometric coarse grid problems) a condition number of the order $O(1+H/h)$, compare (\ref{AAA}) with $\delta=c h$. Now, with (\ref{Hh}), we obtain a condition number and a convergence rate which is independent of $N$ and $P$, i.e. we achieve weak scaling of the respective domain decomposition method. Moreover (\ref{Hh}) suggests the choice 
\begin{equation}\label{qchoice}
	q := c \cdot 2^S,
\end{equation}
which then gives $H/h \approx c^{-1/d}$ independent of $N$, $P$ and $S$.
We are interested in also observing such a weak scaling behavior for our non-geometric overlapping domain decomposition method, which is based on the space-filling curve approach and which involves a coarse scale problem set up purely algebraically via agglomeration. 

We study the convergence behavior of the two-level additive Schwarz/Richardson-type iteration from Algorithm \ref{Alg:DDMAlgorithm2} 
with optimal damping parameter $\xi^*=2/(\lambda_{min}+\lambda_{max})$ as a solver. To this end, we numerically determine the two eigenvalues by the Krylov-Schur method beforehand. Note that the eigenvalues correspond to their specific subspace splitting, i.e. to the specific two-level domain decomposition operator under consideration. The splitting depends on the dimension $d$, on the discretization level $ l=(l_1,\ldots,l_d)$, on the number $P$ of subdomains, on the weak scaling parameter $S$, on the coarse level parameter $q$, on the overlap parameter $\gamma$, on the respective scalings $D_i$, on the specific space-filling curve, and finally on the specific type of Schwarz operator (plain additive, balanced). 

First, we consider the one-dimen\-sio\-nal case in our numerical experiments. Surely there is no need to employ a parallel iterative domain decomposition solver and a direct sequential Gaussian elimination would be sufficient. However, this is a good starting point to study the convergence and parallel scaling properties of the various algorithmic variants. 
Moreover it will turn out that the one-dimensional case is indeed the most difficult one for good convergence and scaling results. 
This behavior stems from the relative ''distance'' of the fine scale to the coarse scale, which is maximal for $d=1$.
Here we study the convergence and scaleup behavior of the standard additive two-level Schwarz/Richardson-type approach (\ref{addSW}) as well as its balanced variant (\ref{bala}). Furthermore we consider the conjugate gradient method with these two associated variants of the respective two-level domain decomposition operators as preconditioner. 
We set $N:= 2^S P$ where $S:=8$. Thus the size of each subproblem stays fixed at $2^8$ with growing $P$, whereas the overall number $N$ of unknowns grows linearly with $P$. Moreover we fix $q=16$ and $\gamma=0.5$. We compare the different methods for the three scalings $D_i=I$ (no scaling at all), $D_i=\omega_i I$ with $\omega_i$ according to (\ref{omegai}), and $D_i$ according to (\ref{Di}), $i=1,\ldots,P$.
For our special choice of $\gamma$ we know from Lemma \ref{lemweig} that the weighting with $\omega_i$ and the weighting with $D_i$ are indeed the same and differ from the unweighted case by just the constant scaling $\frac 1 c I$ with $c=2 \gamma+1$. For the coarse scale problem, we always set $D_0=I$. 
The results for the two types of convergence rates of the standard additive two-level Schwarz/Richardson-type approach (\ref{addSW}) and the associated conjugate gradient method are shown in Figure \ref{compare1_rho}. The results for the two types of convergence rates of the balanced variants according to (\ref{bala}) are given in Figure \ref{compare2_rho}. 
\begin{figure}[ht]
\centering
	\includegraphics[width=0.82\textwidth,height=0.33\textheight]{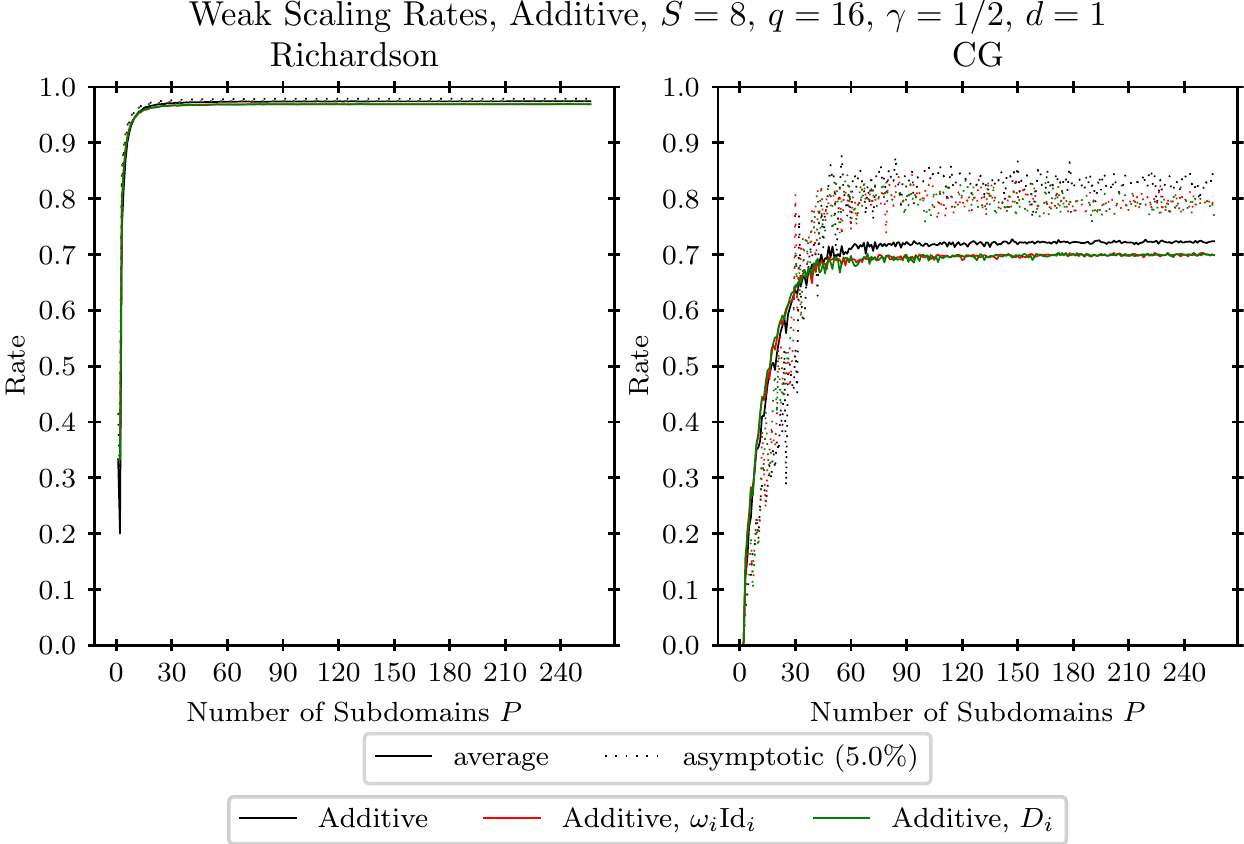}
	\caption{Weak scaling: Convergence rates $\rho^{ave}$ and $\rho^{asy}$ versus number of subdomains for the three weighted versions of the {\em plain} two-level additive Schwarz/Richardson iteration (left) and the associated preconditioned conjugate gradient method (right), $d=1$, $q=16$, $\gamma=0.5$.}
\label{compare1_rho}
\end{figure}
\begin{figure}[ht]
\centering
	\includegraphics[width=0.82\textwidth,height=0.33\textheight]{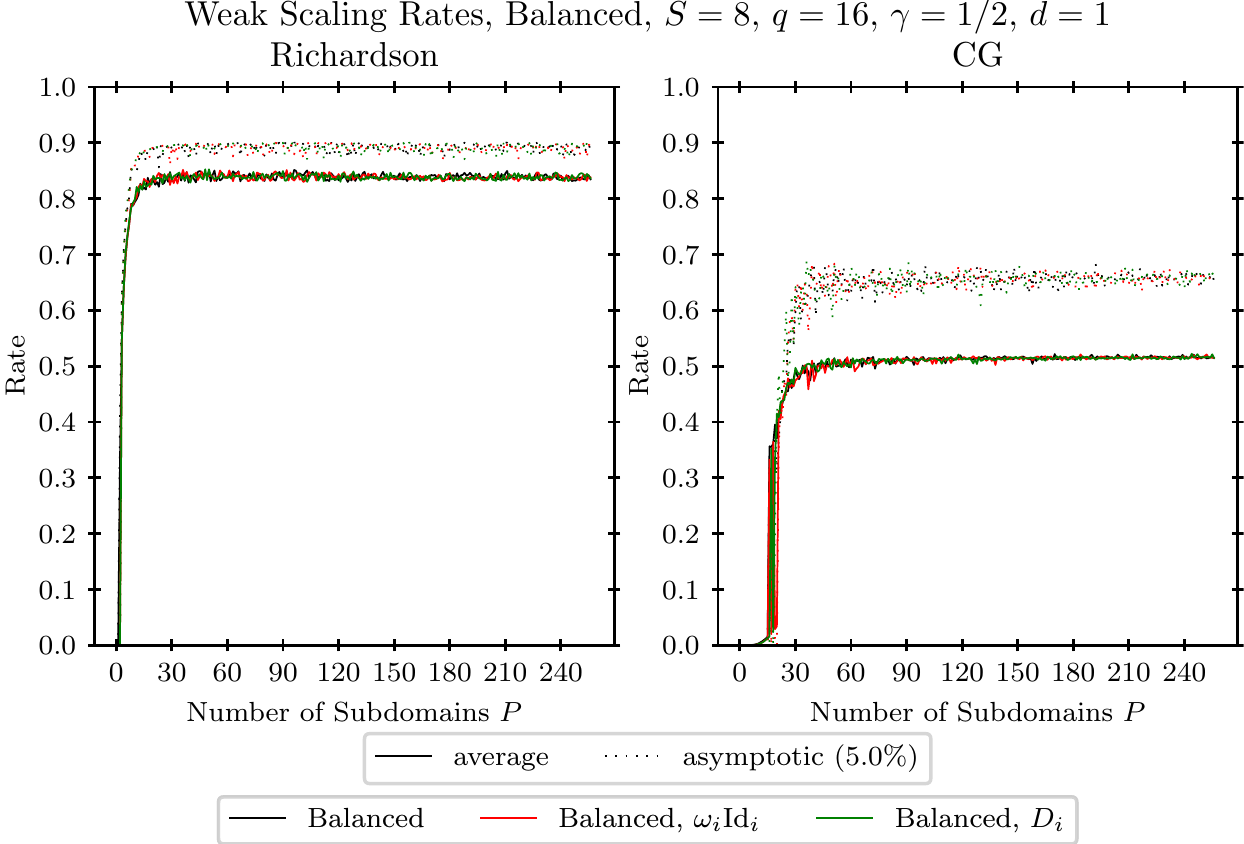}
	\caption{Weak scaling: Convergence rates $\rho^{ave}$ and $\rho^{asy}$ versus number of subdomains for the three weighted versions of the {\em balanced} two-level additive Schwarz/Richardson iteration (left) and the associated preconditioned conjugate gradient method (right), $d=1$, $q=16$, $\gamma=0.5$.}
\label{compare2_rho}
\end{figure}

We see that the scaling with $\omega_i$ and the scaling with $D_i$ indeed give the same results, as was expected. 
Moreover the case of no scaling at all only gives slightly worse results  for the plain additive variant and even gives the same results for the balanced variant for this one-dimensional situation. Furthermore we observe that $\rho^{ave}$ and $\rho^{asy}$ are approximately the same in the Richardson case, i.e. there is not much difference to be seen.
This is surely due to the large number of iterations needed to reach an error reduction of $10^{-8}$ and a moderate preasymptotic regime.  
The rates are quite high with a value of about 0.98 for larger values of $P$.
Moreover, in the conjugate gradient case, we see a slight difference between $\rho^{ave}$ and $\rho^{asy}$ of about 0.08, which is due to the preasymptotic regime incorporated in $\rho^{ave}$ and the lower number of iterations involved.
For the balanced variants, there is still not much difference for the Richardson iteration, i.e. there is just a difference of about 0.04 between the $\omega_i$-scaling and no-scaling at all, but the rates are a improved with values of about 0.9. In the conjugate gradient case, we now observe an improvement to a rate of 0.6 for $\rho^{ave}$ due to balancing. Thus balancing indeed improves convergence substantially. Moreover we obtain a more distinct difference between $\rho^{ave}$ and $\rho^{asy}$ with a value of 0.13, which is again due to the preasymptotic regime incorporated in $\rho^{ave}$ and the lower number of iterations involved. Nevertheless, for rising values of $P$, constant convergence rates are obtained in any case, which indicates weak scaling.

We conclude that the average convergence rate $\rho^{ave}$ is a sufficient measure to unveil the weak scaling properties of our various algorithms, even though the asymptotic convergence rate $\rho^{asy}$ is a bit larger.
Since $\rho^{ave}$ is related via $K=\lceil \log(10^{-8})/\log(\rho^{ave})\rceil$ to the number $K$ of required iterations, we will report the number $K$ of iterations instead of $\rho^{ave}$ for the remainder of this subsection. As we are interested in weak scaling properties and since a fixed reduction of the initial error is considered anyway, the necessary number $K$ of iterations is sufficient for our purpose and we may neglect the influence of the preasymptotic regime. This will be different in the next Subsection \ref{subft}, where we will consider the behavior of our algorithms for different fault probabilities. Then the size of the preasymptotic regime can be influenced by faulty subproblem solves and plays a certain role, as we will see later on.  

Thus, from now on in this subsection, the number $K$ of iterations necessary to reduce the initial error by a factor of $10^{-8}$ is shown. For the plain additive two-level Richardson-type approach (\ref{addSW}) and the associated conjugate gradient method, they are given in Figure \ref{compare1_it}. For the balanced variants according to (\ref{bala}), they are given in Figure \ref{compare2_it}. 
\begin{figure}[ht]
\centering
	\includegraphics[width=0.82\textwidth,height=0.33\textheight]{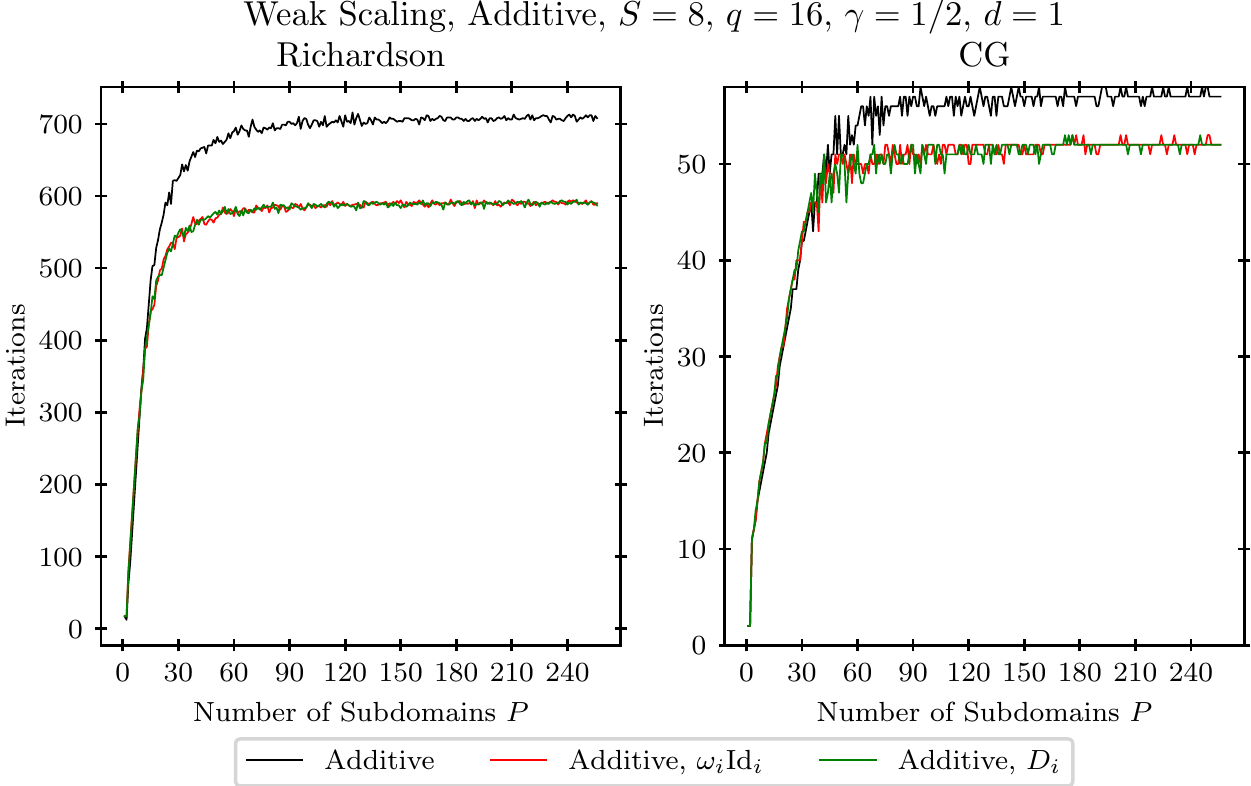}
	\caption{Weak scaling: Number of iterations $K$ versus number of subdomains for the three differently weighted versions of the {\em plain} two-level additive Schwarz/Richardson iteration (left) and the associated preconditioned conjugate gradient method (right), $d=1$, $q=16$, $\gamma=0.5$.}
\label{compare1_it}
\end{figure}
\begin{figure}[ht]
\centering
	\includegraphics[width=0.82\textwidth,height=0.33\textheight]{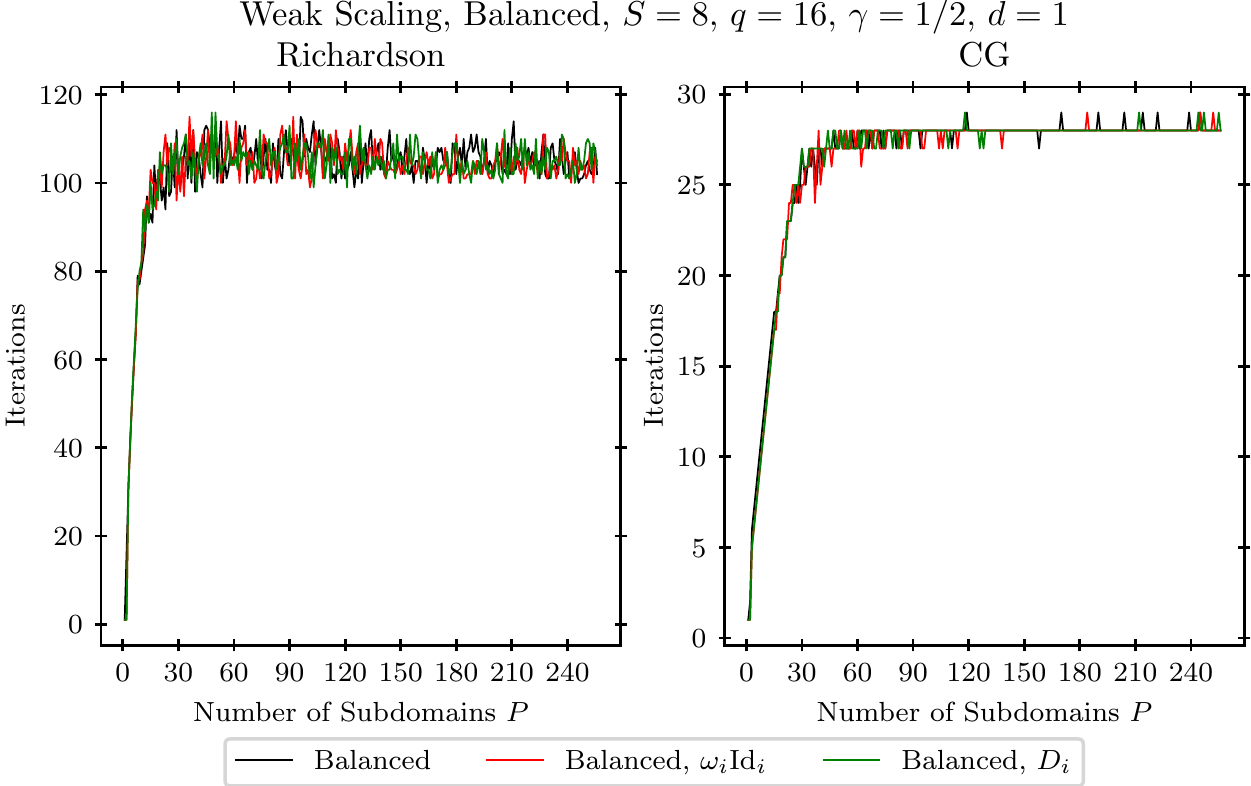}
	\caption{Weak scaling: Number of iterations $K$ versus number of subdomains for the three differently weighted versions of the {\em balanced} two-level additive Schwarz/Richardson iteration (left) and the associated preconditioned conjugate gradient method (right), $d=1$, $q=16$, $\gamma=0.5$.}
\label{compare2_it}
\end{figure}

Of course we observe the same behavior as in the two previous figures for $\rho^{ave}$, albeit now in a different representation which is due to the measured number of iterations. Moreover we again obtain weak scaling behavior in all cases, i.e. the necessary number of iterations stays constant for growing values of $P$. This constant depends on the respective splitting:
For the plain additive Schwarz/Richardson iteration (left) we see that a scaling with $\omega_i$ (and equally with $D_i$) reduces this constant compared to the no-scaling case, albeit a large number of iterations is still needed. Moreover the results get substantially improved by the balanced variants: All three scalings now give the same results and the weak scaleup constant is reduced by a factor of approximately seven for the unweighted case and by a factor of approximately six for the other two variants.
For the associated preconditioned conjugate gradient methods (right) we observe a further reduction of the necessary number of iterations. 
This reflects roughly the $\kappa$-versus-$\sqrt \kappa$ effect of the conjugate gradient method in its convergence rate.
For the balanced version, we additionally see a substantial improvement of the scaleup constant compared to the plain additive preconditioned conjugate gradient method by a factor of nearly one half.
Note here that for values of $\gamma$ that are not integer multiples of 1/2, the $D_i$-scaling does not lead to a symmetric operator, which renders a sound and robust convergence theoretically questionable and in further experiments gave considerably worse iteration numbers with oscillating behavior for the corresponding preconditioned conjugate gradient method in practice.

We conclude that the balanced variant of both the Schwarz/Richardson iteration and the associated preconditioned conjugate gradient method is substantially faster than the plain version. We also see that, for our choice $\gamma=1/2$, balancing eliminates the difference of the unscaled and the $\omega_i$-scaled (and $D_i$-scaled) cases. Moreover the preconditioned conjugate gradient version is nearly quadratically faster and gives good weak scaling constants. 
In further comparisons with the Nicolaides method (\ref{balan}) it was found that the balanced approach was again superior. Also, a comparison with the deflated variant (\ref{balan}) revealed that the balanced method was more robust.
Therefore, we will from now on focus on the optimally damped, {\em balanced} Schwarz iteration and the associated preconditioned conjugate gradient method. The type of damping we will choose, i.e. none at all, $\omega_i$ according to (\ref{omegai}) or $D_i$ according to (\ref{Di}) is still to be determined. We refrain from employing the $D_i$-weighting for general choices of $\gamma$, since in general this results in a non-symmetric operator for which our theory is not valid anymore. 
It now remains to study the behavior of the unweighted and the $\omega_i$-weighted algorithms in more detail.

So far, we kept the value of the overlap parameter $\gamma$ fixed. Now we vary $\gamma$ and consider its influence on the weak scaling behavior of our two algorithms in the balanced case. First we consider the one-dimensional situation, where we set $S=8$, $q=16$ and vary the number $P$ of subdomains. The resulting number of iterations for different values of $\gamma$ ranging from $1/5$ up to $5$ are shown in Figure \ref{weak_vary_gamma_1d} for the unweighted case and in Figure \ref{weak_vary_gamma_omegai_1d} for the $\omega_i$-weighted case.
\begin{figure}[ht]
\centering
	\includegraphics[width=0.82\textwidth,height=0.38\textheight]{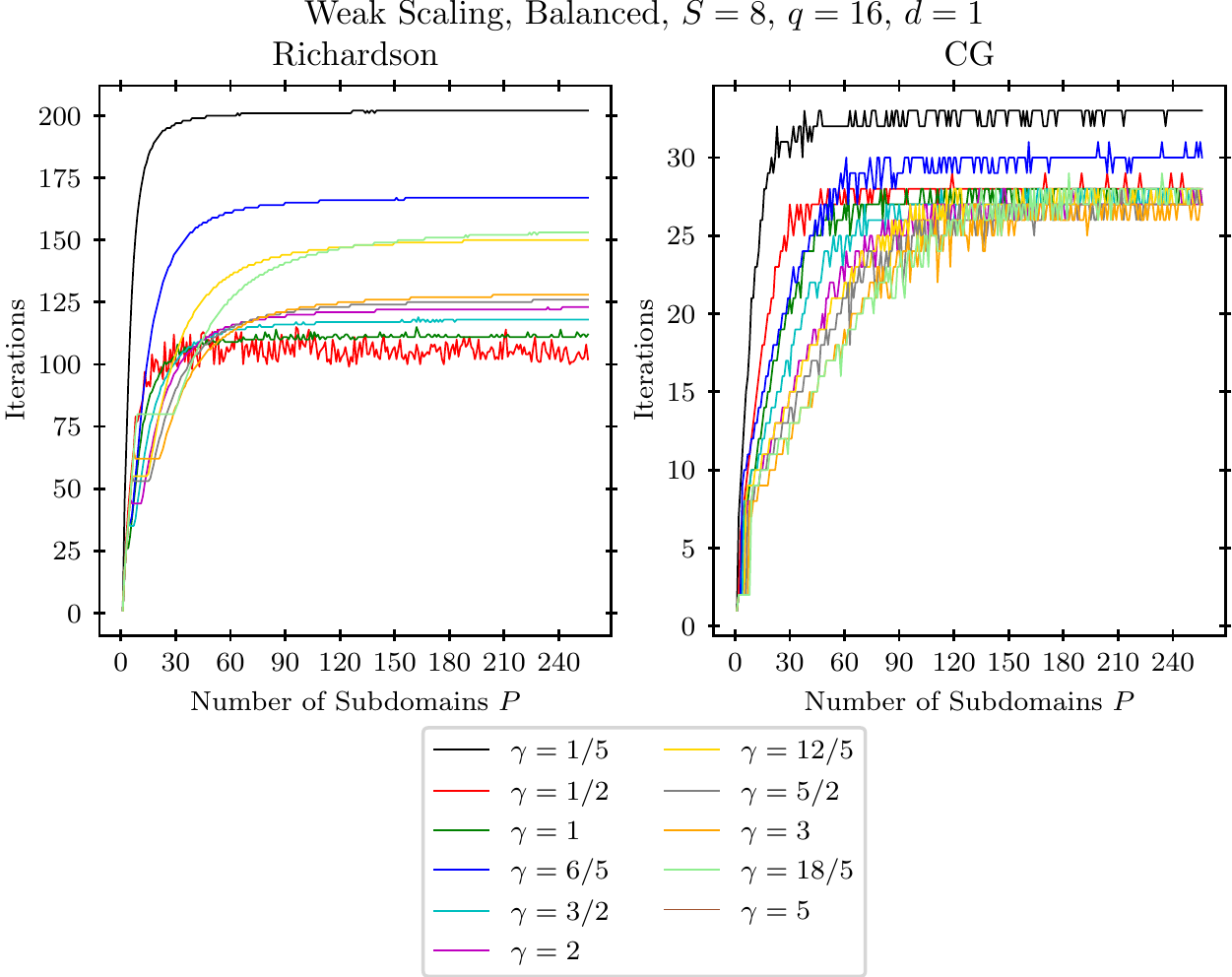}
	\caption{Weak scaling: Number of iterations versus number of subdomains for the unweighted balanced two-level additive Schwarz/Richardson iteration (left) and the associated preconditioned conjugate gradient method (right) for different values of $\gamma$ with varying $P$, $d=1$, $q=16$, $S=8$.}
	\label{weak_vary_gamma_1d}
\end{figure}
\begin{figure}[ht]
\centering
	\includegraphics[width=0.82\textwidth,height=0.38\textheight]{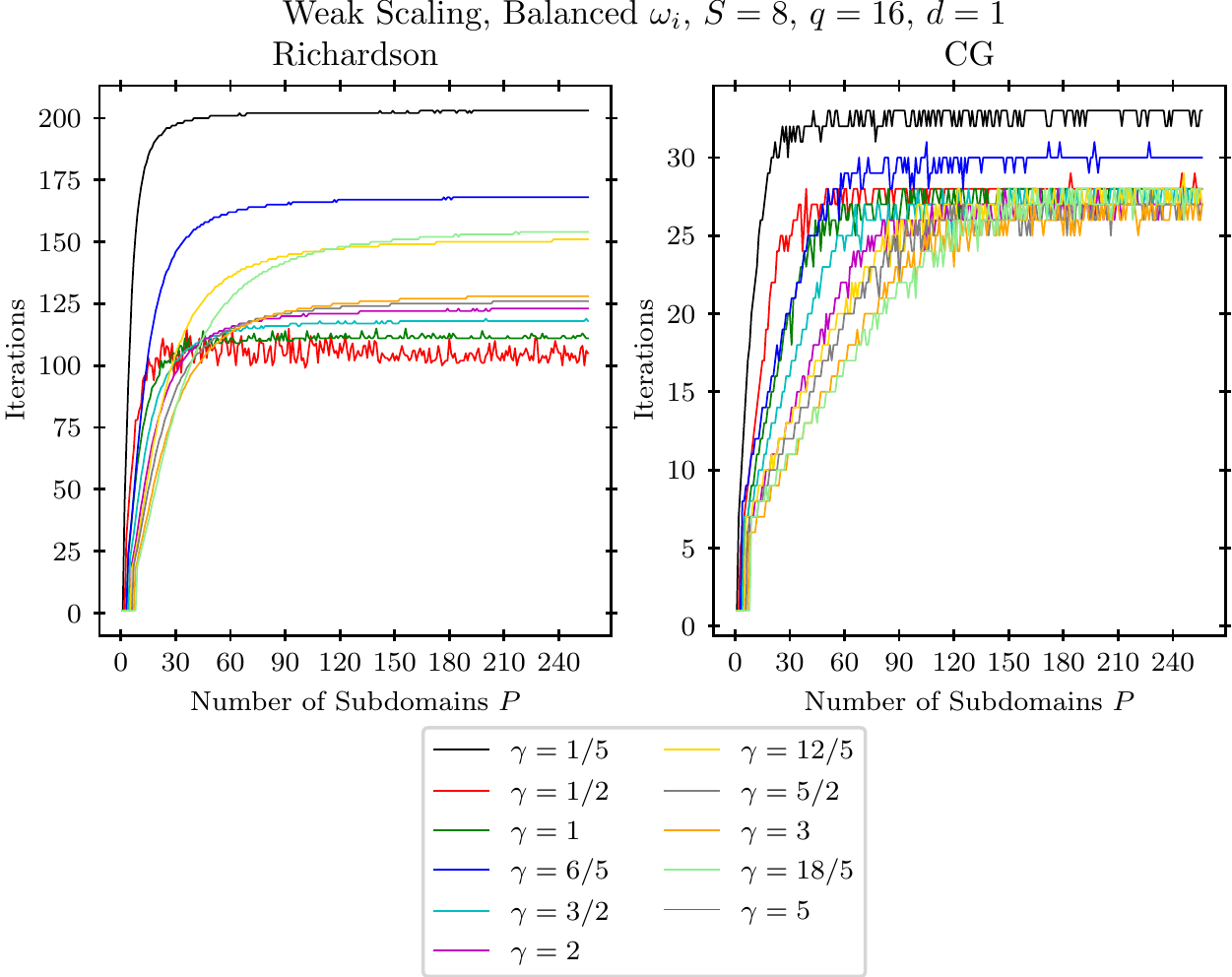}
	\caption{Weak scaling: Number of iterations versus number of subdomains for the $\omega_i$-weighted balanced two-level additive Schwarz/Richardson iteration (left) and the associated preconditioned conjugate gradient method (right) for different values of $\gamma$ with varying $P$, $d=1$, $q=16$, $S=8$.}
	\label{weak_vary_gamma_omegai_1d}
\end{figure}

Comparing the $\omega_i$-weighted case to the unweighted case, there is not much visible difference at all.
We again clearly see weak scaling behavior.
The scaling constant now depends on the respective value of $\gamma$. 
In the Richardson case, it is interesting to observe that, in any case, a constant number of iterations is quickly reached for rising values of $P$. 
Moreover, for a small overlap value of $\gamma=1/5$, it is quite bad. The number of iterations is seen to be optimal for $\gamma=1/2$ and then deteriorates for larger values of $\gamma$. 
Note at this point that, starting with $\gamma=1/2$, we observe a slight deterioration of the convergence curves and of the weak scaling constant, where this deterioration is {\em monotone} in $n$ if we restrict ourselves to values of $\gamma$ that are integer multiples of $0.5$, i.e. $\gamma=\frac 1 2 n, n \in \mathbb{N}_+$. The non-integer multiples give worse results.
In the conjugate gradient case, the scaling constant is reached increasingly later for rising values of $P$ (which is desirable). It is improved in a nearly monotonic way for rising values of $\gamma$. 
Furthermore the absolute value of necessary iterations is again much smaller than in the Richardson case.  

Next, we consider the three-dimensional case and again vary the value of $\gamma$. Then, in contrast to the one-dimensional situation, the Hilbert curve structure comes into play. The resulting iteration numbers for different values of $\gamma$ in the unweighted and the $\omega_i$-weighted case are shown in Figures \ref{weak_vary_gamma_3d} and \ref{weak_vary_gamma_omegai_3d}, respectively.
\begin{figure}[ht]
\centering
	\includegraphics[width=0.82\textwidth,height=0.38\textheight]{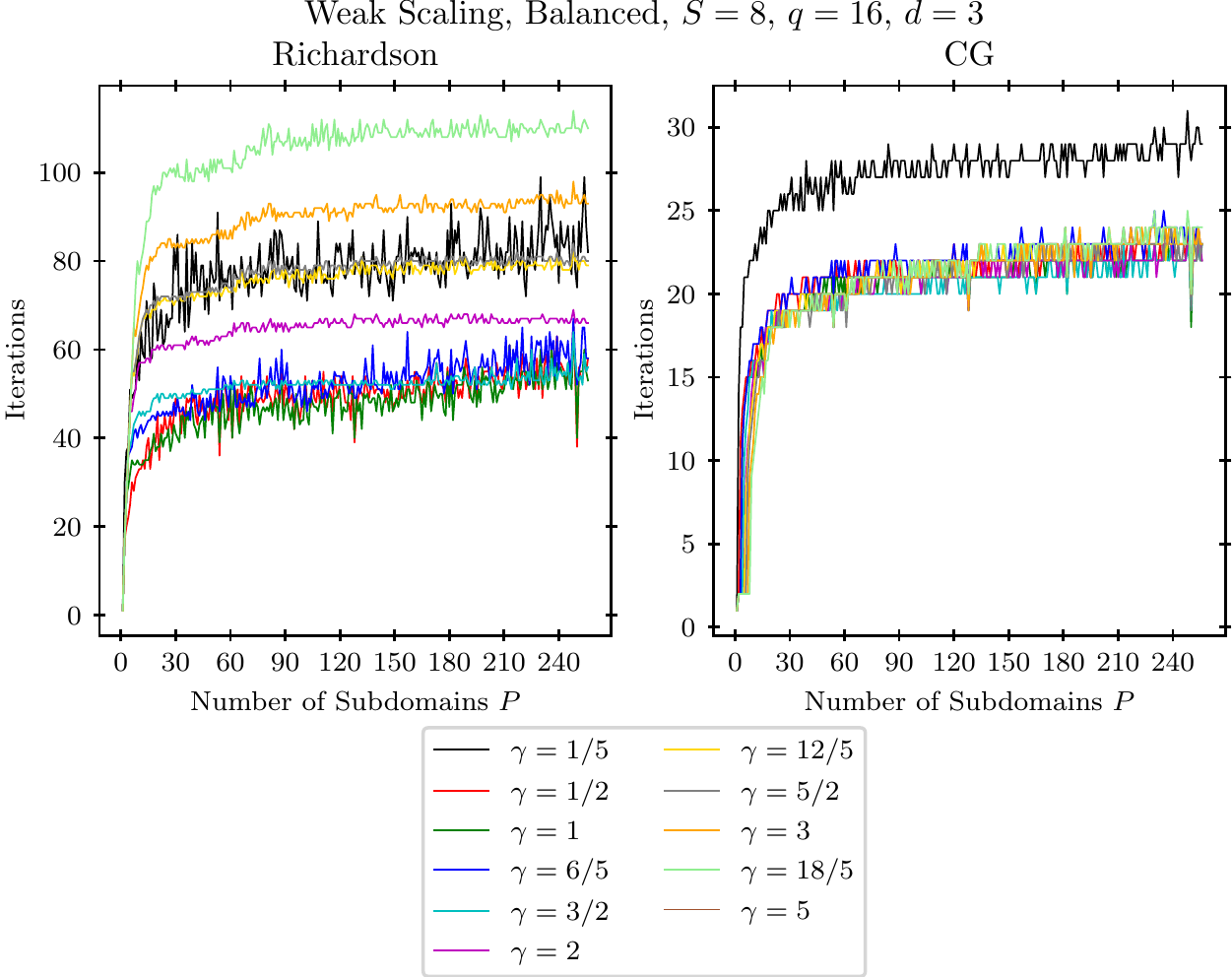}
	\caption{Weak scaling: Number of iterations versus number of subdomains for the unweighted balanced two-level additive Schwarz/Richardson iteration (left) and the associated preconditioned conjugate gradient method (right) for different values of $\gamma$ with varying $P$, $d=3$, $q=16$, $S=8$.}
	\label{weak_vary_gamma_3d}
\end{figure}
\begin{figure}[ht]
\centering
	\includegraphics[width=0.82\textwidth,height=0.38\textheight]{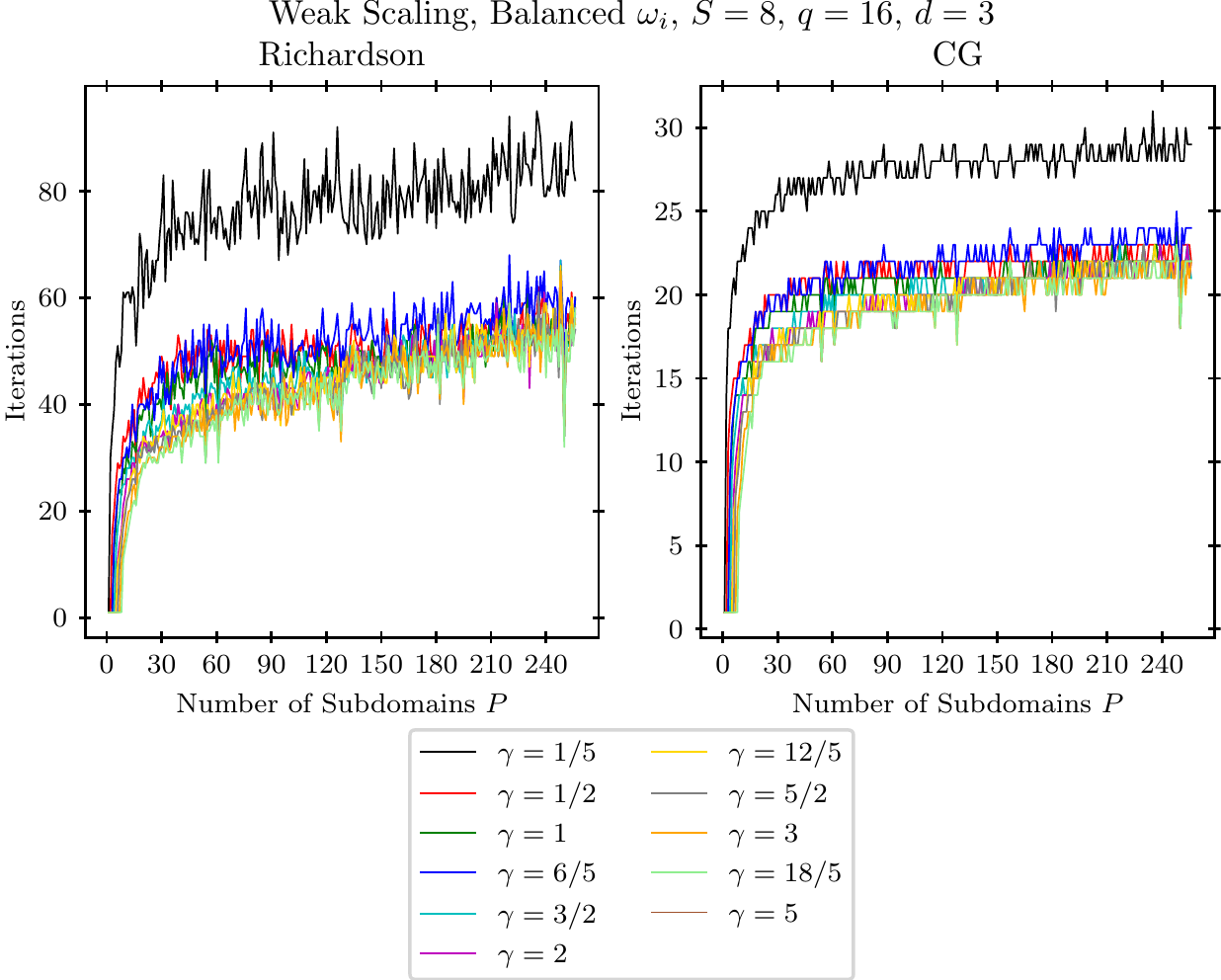}
	\caption{Weak scaling: Number of iterations versus number of subdomains for the $\omega_i$-weighted balanced two-level additive Schwarz/Richardson iteration (left) and the associated  preconditioned conjugate gradient method (right) for different values of $\gamma$ with varying $P$, $d=3$, $q=16$, $S=8$.}
	\label{weak_vary_gamma_omegai_3d}
\end{figure}

For the Richardson iteration we again see an analogous behavior for the weak scaling constant but with a much lower number of iterations compared to the one-dimensional case. Furthermore we observe that the $\omega_i$-scaling improves the convergence: Except for $\gamma=1/5$, all required numbers of iterations are now close together for both the values of $\gamma$ that are integer multiples of $1/2$ and the other values, and their respective number of iterations are in any case reduced to around $50$. 
This shows that $\omega_i$-weighting is able to deal with the decomposition based on the Hilbert curve, which appears for $d>1$, in a proper way.
In the unweighted conjugate gradient case, the curves are approximately the same for all values of $\gamma$, except for $\gamma=1/5$ which is too small again. 
They are successively improved for rising values of $\gamma$ in the $\omega_i$-weighted case, as is expected intuitively. Again, the conjugate gradient method is much faster than the Richardson scheme. Similar observations could be made for other dimensions.

Altogether, $\omega_i$-weighting stabilizes the iteration numbers against variations of $\gamma$ and improves the convergence behavior. In light of the larger costs involved for higher values of $\gamma$, a good choice for $\gamma$ is given by the value $1/2$ in both the Richardson and the conjugate gradient case. 
However, larger values of $\gamma$ may be needed to attain fault tolerance due to redundancy later on.
The $\omega_i$-weighted case then indeed results in slightly better convergence results for larger values of $\gamma$ and shows no deterioration, which the unweighted case does.    
From now on we therefore will focus on the optimally damped, $\omega_i$-weighted balanced Schwarz iteration and the associated preconditioned conjugate gradient method.
Moreover we  will restrict ourselves in this subsection to the value $\gamma=1/2$. 
In the next subsection, where we deal with fault tolerance, we will also employ values of $\gamma$ that are larger integer multiples of $1/2$. 

So far, we kept the weak scaling parameter $S$, i.e. the size $2^S$ of each subproblem, fixed and only varied the number $P$ of subdomains. But what happens if we also vary the subproblem size? For fixed $q=16$, the results are shown in Figure \ref{fixq}.
\begin{figure}[ht]
\centering
	\includegraphics[width=0.82\textwidth,height=0.33\textheight]{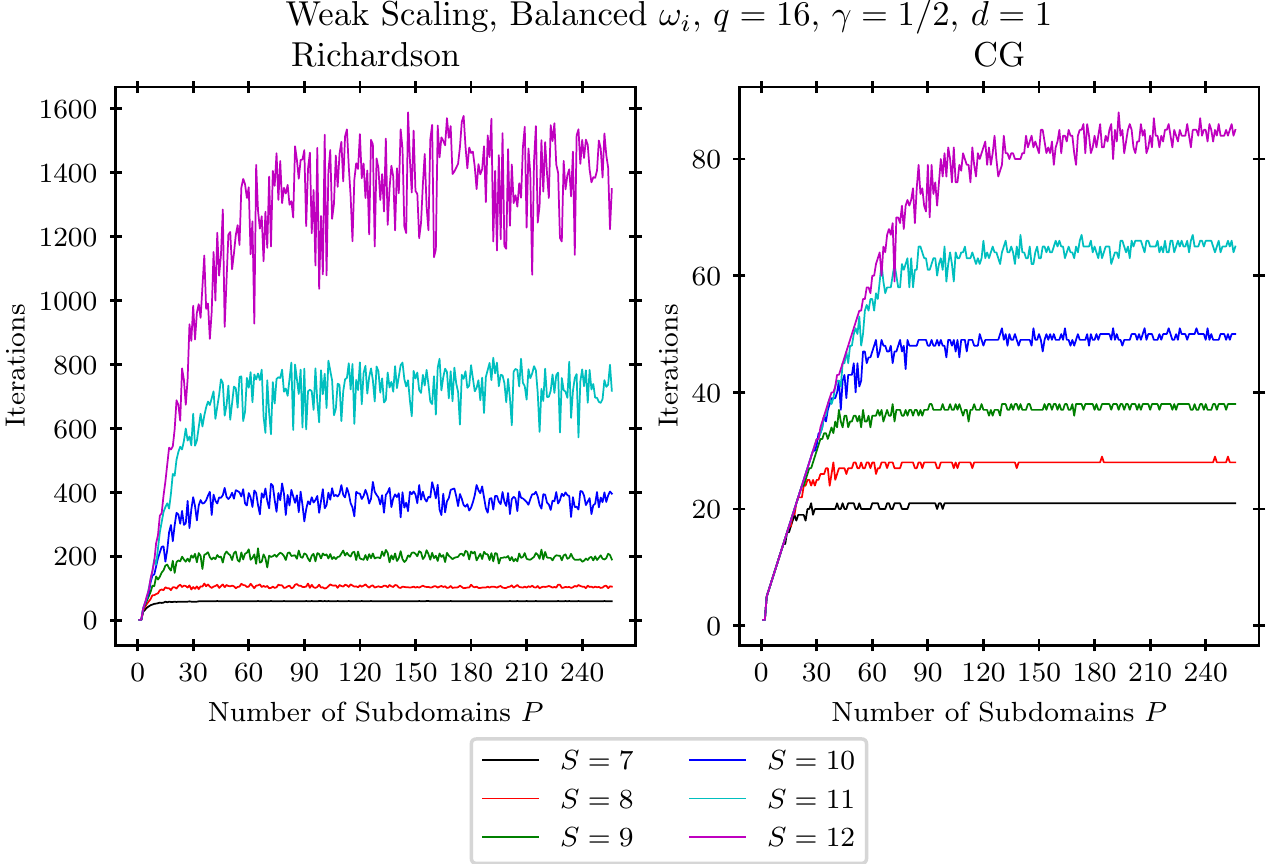}
	\caption{Weak scaling: Number of iterations versus number of subdomains for the $\omega_i$-weighted balanced two-level additive Schwarz/Richardson iteration (left) and the associated preconditioned conjugate gradient method (right) for different values of $S$, $d=1$, $q=16$, $\gamma=0.5$.}
\label{fixq}
\end{figure}

We again clearly see weak scaling behavior.
However the weak scaling constant now depends on the subproblem size, i.e. it grows with rising $S$. This holds for both the Richardson-type iteration and the conjugate gradient approach. This behavior stems from the fixed value of $q$ and thus the fixed size of the coarse scale problem for fixed $P$. In this case the difference between fine scale and coarse scale increases with growing $S$ and, consequently, the additive coarse scale correction is weakened compared to the fine scale.

We now let the coarse scale parameter $q$ be dependent on $S$, compare also the discussion leading to (\ref{qchoice}). To be precise, we set $q=2^{S-4}$, which gives a coarse problem of size $P \cdot 2^{S-4}$, where we vary the parameter $S$. This way, we double $q$ while doubling the subdomain size $2^S$, i.e. $N/P$.
The obtained results are shown in Figure \ref{varq}. 
\begin{figure}[ht]
\centering
	\includegraphics[width=0.82\textwidth,height=0.33\textheight]{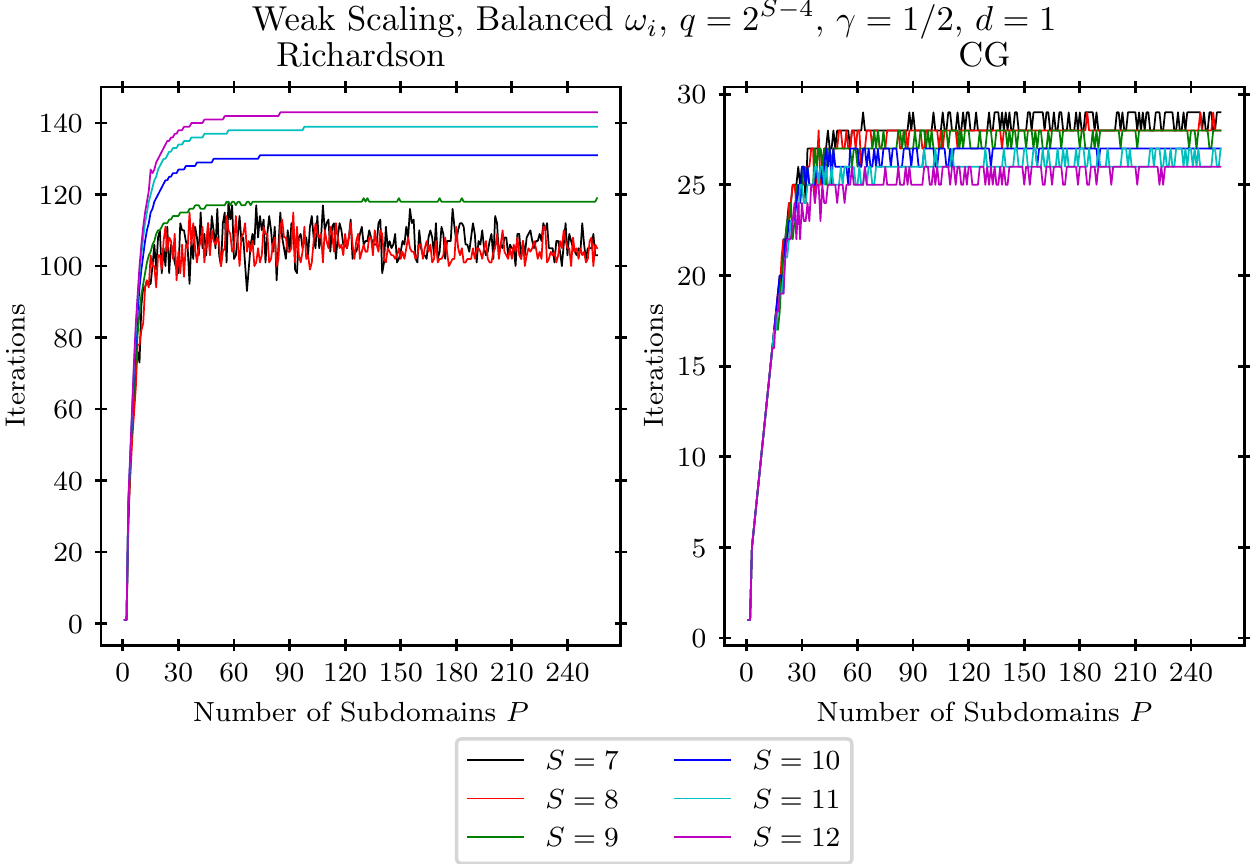}
	\caption{Weak scaling: Number of iterations versus number of subdomains for the $\omega_i$-weighted balanced two-level additive Schwarz/Richardson iteration (left) and the associated preconditioned conjugate gradient method (right) for different values $S$, $d=1$, $q=2^{S-4}$, $\gamma=0.5$.}
\label{varq}
\end{figure}

In all cases, we obtain substantially improved results compared to the fixed choice ${q=16}$ from Figure \ref{fixq}. This was to be expected since now the coarse scale correction is improved for rising $S$. We again observe an asymptotically constant number of iterations for growing values of $P$ and we obtain a weak scaling constant which is, compared to the fixed choice ${q=16}$, now only slightly growing with $S$ for the Richardson iteration and seems to approach a limit of about 145 for rising values of $S$. Moreover it is now completely independent of $S$ for the conjugate gradient approach, for which we need at most 29 iterations in all cases. This shows that $q$ should scale proportional to $N/P$. In further experiments we quadrupled $q$ while doubling $N/P$ and found that the weak scaling constant then even shrank with growing $S$. 

Note at this point that the size of the coarse scale problem is now $2^{-4} \cdot P \cdot 2^S$, while the size of each subdomain problem is $2^S$ (whereas the size of the overall problem is $N=P\cdot 2^S$).
Thus the cost of solving the coarse scale problem tends to dominate the overall cost with rising $P$, which is the price to pay for a more uniform convergence behavior. This calls for further parallelization of the coarse scale problem itself via our $P$ processors to remedy this issue. Thus, in contrast to the present implementation via the Bank-Holst paradigm where we redundantly keep the coarse grid problem on each processor (besides the associated subdomain problem), we should partition the coarse scale matrix to the $P$ processors and therefore solve the coarse scale problem in a parallel way.
This however will be future work.

Next, we consider the weak scaling behavior for varying dimensions $d=1,2,3,4,5,6$.
For the discretization we stick to the isotropic situation, i.e. we set
$${l} =(\lfloor (S+\log(P))/d\rfloor, \ldots, \lfloor (S+\log(P))/d \rfloor).$$ 
Thus the overall number of degrees of freedom is independent of $d$, since 
$$N \approx \prod_{j=1}^d 2^{(S+\log(P))/d} =2^{S+\log(P)}=P \cdot 2^S,$$ 
and the size of each subdomain is again approximately $2^S$ for all values of $P$. Furthermore we choose $\gamma=0.5$, $S=8$ and $q=2^{S-4}$ and consider only the $\omega_i$-weighted balanced methods. The resulting weak scaling behavior is shown in Figure \ref{dimvarq}.
\begin{figure}[ht]
\centering
	\includegraphics[width=0.82\textwidth,height=0.33\textheight]{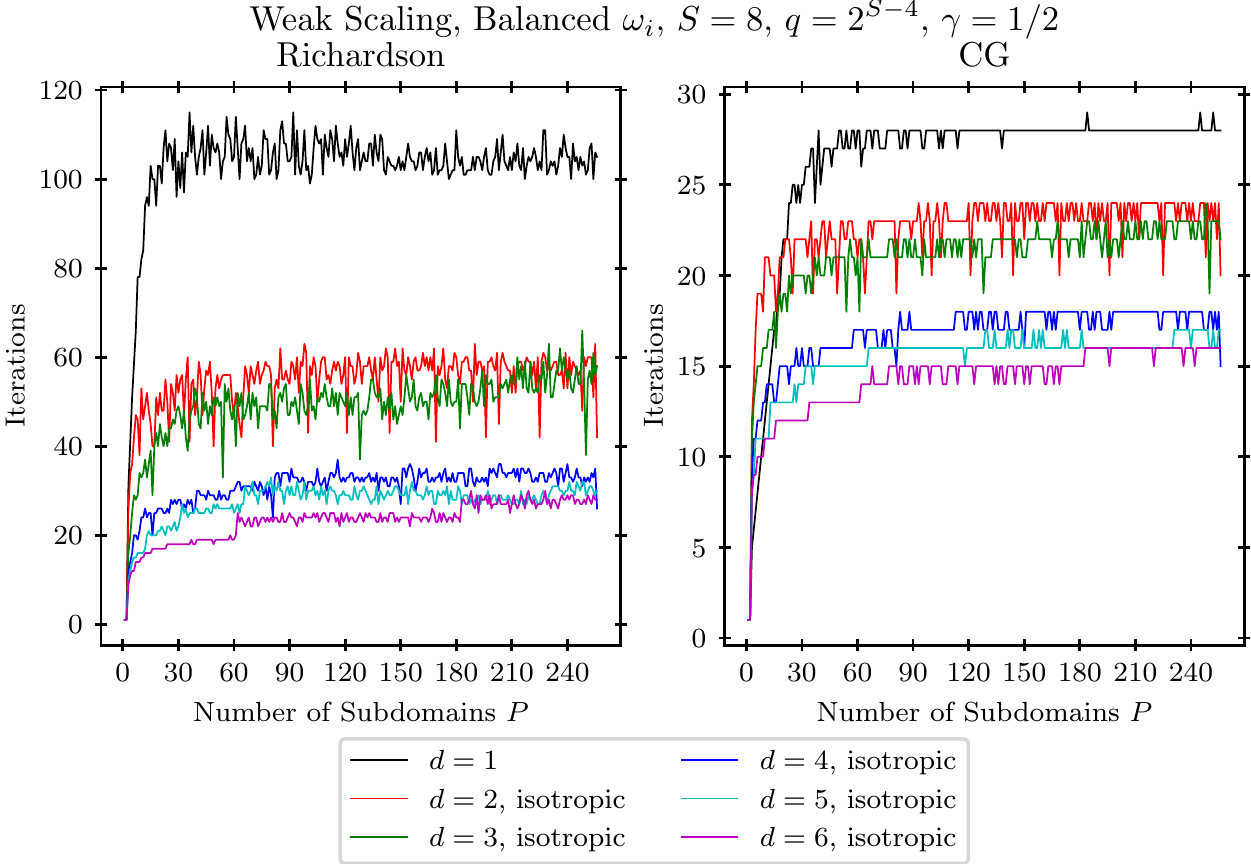}
	\caption{Weak scaling: Number of iterations versus number of subdomains for the $\omega_i$-weighted balanced two-level additive Schwarz/Richardson iteration (left) and the associated  preconditioned conjugate gradient method (right) for different dimensions $d$, $q=2^{S-4}$, $\gamma=0.5$.}
\label{dimvarq}
\end{figure}

We always obtain weak scaling behavior, where now the constant depends on $d$. But it improves for growing values of $d$ and we see that the one-dimensional case is indeed the most difficult one. This is due to the relative ''distance'' of
the fine scale to the coarse scale in the two-level domain decomposition method, which is largest for $d=1$  and decreases for larger values of $d$. Furthermore a stable limit of 26 and 16 iterations, respectively, is reached  for $d\geq 6$.
Such a behavior could be observed not only for the isotropic discretizations in $d$ dimensions but also for all the various anisotropic discretizations, which arise from (\ref{eq:combi}) in the sparse grid combination method. This becomes clear when we consider the simple case of the anisotropic discretization $l=(\lfloor S+\log(P)\rfloor,1,\ldots, 1)$ in $d$ dimensions: With our homogeneous Dirichlet boundary conditions we obtain the same fine scale finite difference discretization as for the case $d=1$ with differential operator $-\partial^2/\partial^2_{x_1} + 8 (d-1) \cdot I_{x_1}$. We then have an additional reaction term of size $8 (d-1),$ which merely improves the condition number compared to the purely one-dimensional Laplacian. Consequently, the one-dimensional convergence results impose an upper limit to the number of iterations needed for all the subproblems arising in the combination method.

Now let us shortly consider the strong scaling situation as well. There we have $N=2^L$, where the size of each subdomain is $2^L /P$, i.e. it decreases with growing values of $P$. Moreover values of $P$ larger than $2^L$ are not feasible. We consider again the one-dimensional situation, set $q=2^{L-12}$, $\gamma=0.5$, and vary the number $P$ of subdomains. The resulting strong scaling behavior is shown for $L=16,18,20$ in Figure \ref{strongfix}.
\begin{figure}[ht]
\centering
	\includegraphics[width=0.82\textwidth,height=0.30\textheight]{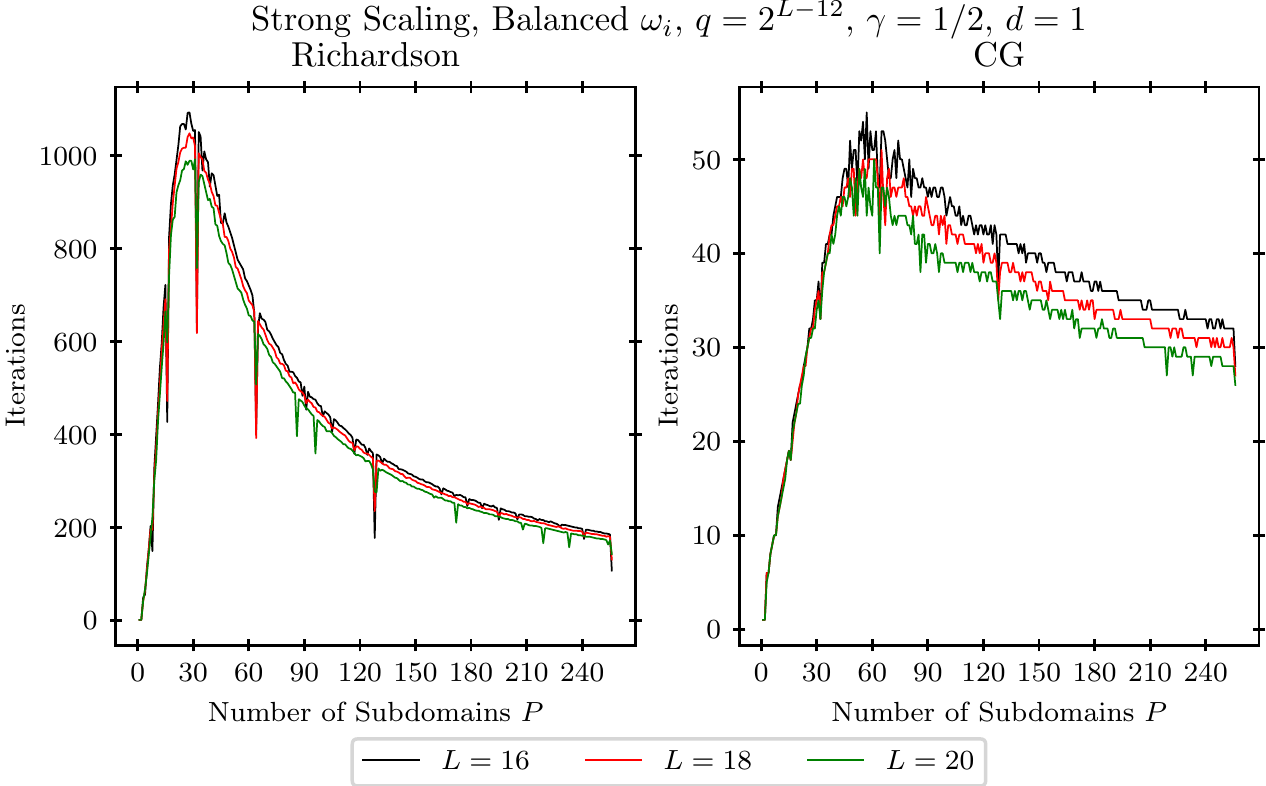}
	\caption{Strong scaling: Number of iterations versus number of subdomains for the $\omega_i$-weighted balanced two-level additive Schwarz/Richardson iteration (left) and the associated preconditioned conjugate gradient method (right), $d=1$, ${L=16,18,20}$, $q=2^{L-12}$, $\gamma=0.5$.}
\label{strongfix}
\end{figure}

We see that the necessary number of iterations first grows with rising values of $P$ and then, after its peak, steadily declines, as is expected. This is due to the fact that now the size of each subproblem shrinks with rising $P$, whereas the size $P \cdot 2^{L-12}$ of the coarse scale problem grows linearly with $P$. There is not much of a difference for the three curves $L=16$, $L=18$ and $L=20$ since  for each $P$ the coarse scale problem has the same relative distance to the fine grid for all values of $L$, i.e. $ 
2^{16}/(2^{16-12} \cdot P) = 2^{18}/(2^{18-12} \cdot P)=2^{20}/(2^{20-12} \cdot P)$. Note here that the downward peaks of the curves in Figure \ref{strongfix} correspond to the situation
where all subdomains are perfectly balanced, i.e. where $N \mod P=0$.  
Furthermore the parameter $q$ was chosen such that the case $P=256$ for $L=16,18,20$ in Figure \ref{strongfix} results in exactly the same situation as the case $P=256$ for $S=8,10,12$ in Figure \ref{varq}, respectively.

We finally consider strong scaling for the situation where $q= 2^{\lfloor \log(N/P)\rfloor-4}$, i.e. where the choice of $q$ is $P$-dependent and is chosen such that the relative distance between the coarse and the fine grid is roughly four levels for all $P$, as was the case in the weak scaling experiments in Figure \ref{varq}. The results are given in Figure \ref{strongvar}. Due to the involved round off there are now jumps at the points where $\log_2(P)$ is an integer. 
However we observe the expected steady decline of the necessary number of iterations in between these jump points. Note that, for this choice of $q$, the necessary numbers of iterations for the end points of these intervals are approximately the same.
\begin{figure}[ht]
\centering
	\includegraphics[width=0.82\textwidth,height=0.30\textheight]{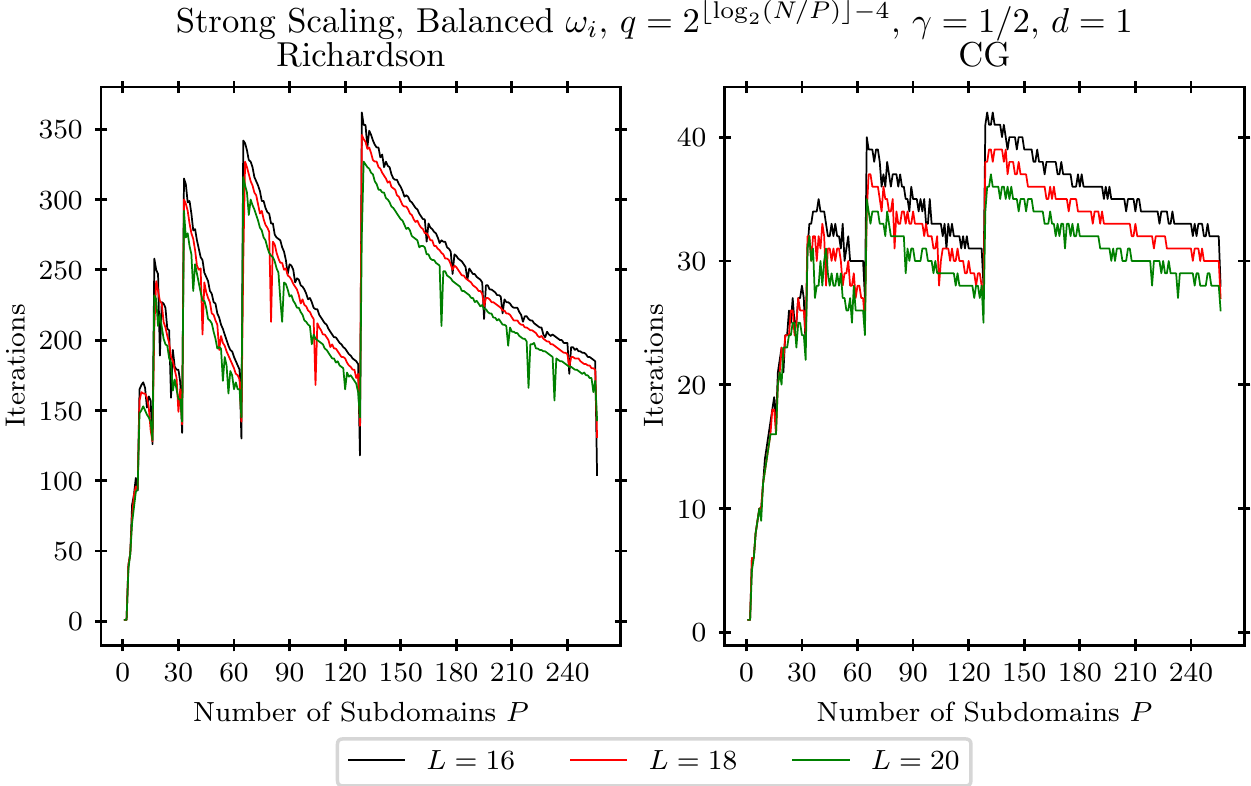}
	\caption{Strong scaling: Number of iterations versus number of subdomains for the $\omega_i$-weighted balanced two-level additive Schwarz (left) and the associated preconditioned conjugate gradient (right) method, $d=1$, $L=16,18,20$, ${q= 2^{\lfloor \log (N/P) \rfloor -4}}$, $\gamma=0.5$.}
\label{strongvar}
\end{figure}

\subsection{Fault tolerance results}\label{subft}
We now consider the results of our fault tolerance experiments. As already mentioned, we assume fault locality (in space and time) which is often done in the literature for analysis purposes. 
As a model for faults we employ the Bernoulli distribution to inject faults into our compute system, as described in detail in Subsection \ref{ssfault}.
This directly gives independent occurrences of faults per cycle. 
Furthermore we assume that a faulty processor will be instantly repaired and is available again in the next iteration, i.e. a processor only stays faulty for the iteration in which the fault was detected. We exploit a sufficiently large $\gamma$ of overlap and thus {\em redundancy} on the stored data in our algorithm to be able to recover lost data due to a faulty processor.
This assumption on the model adheres to the prerequisites of our Corollary \ref{cor1} and our Theorem \ref{theo2} and thus enables an estimate for the expectation of the squared error for our specific additive, two-level domain decomposition method based on space filling curves in the faulty situation. Now we consider the $\omega_i$-weighted balanced two-level additive Schwarz/Richardson iteration with damping parameter $\xi^*$ under faults. Moreover we are interested in the behavior of the associated preconditioned  conjugate gradient methods
under faults, even though we do not have a proper convergence theory for this case as for the simpler linear iteration. Note to this end that the energy norm based on the finite element matrix, for which our theory is valid, and the energy norm based on the finite difference matrix, which we use in practice, are, up to a scaling of $h^{-d/2}=2^{(dL)/2}$, spectrally equivalent due to the well-known $L_2$-stability of the piecewise $d$-linear finite element basis.

First we study the convergence behavior of these different methods. To this end, we consider the case $d=1$, we set $q=16$, $S=8$ and $P=100$. Thus the size $N$ of the overall problem is fixed to $100 \cdot 2^8$ with a size of $2^8$ for each subproblem. Now we study the error reduction properties of our fault-tolerant algorithms. To this end we consider, besides the case of no fault at all, the cases where the faults are randomly generated in each cycle and for each processor according to the Bernoulli distribution with different fault-probabilities $p_\text{fault}=0.01,0.02, 0.05, 0.1$ and $0.2$. This way, on average, one, two, five, ten and twenty processors fail in each iteration. Since the faults are randomly distributed, we run the algorithms ten times and show the resulting convergence curves for each run in transparent colors. We additionally show the averaged convergence curve in bold color. 
Note here also that, if the overlap parameter $\gamma$ is too small compared to the fault rate $p_\text{fault}$, a proper fault recovery might no longer be possible and, depending on the 
specific realization of random picks of faulty processors,
the respective run of the algorithm may no longer be able to recover and may fail. These failed runs are discarded in the computation of the average convergence rate.
For the overlap parameter $\gamma=0.5, 1,1.5,2$, the results for the $\omega_i$-weighted balanced two-level Richardson/Schwarz iteration and the correspondingly preconditioned conjugate gradient method are shown in Figure \ref{ft_bal}.  
\begin{figure}[htb!]
\centering
	\includegraphics[width=0.82\textwidth,height=0.82\textheight]{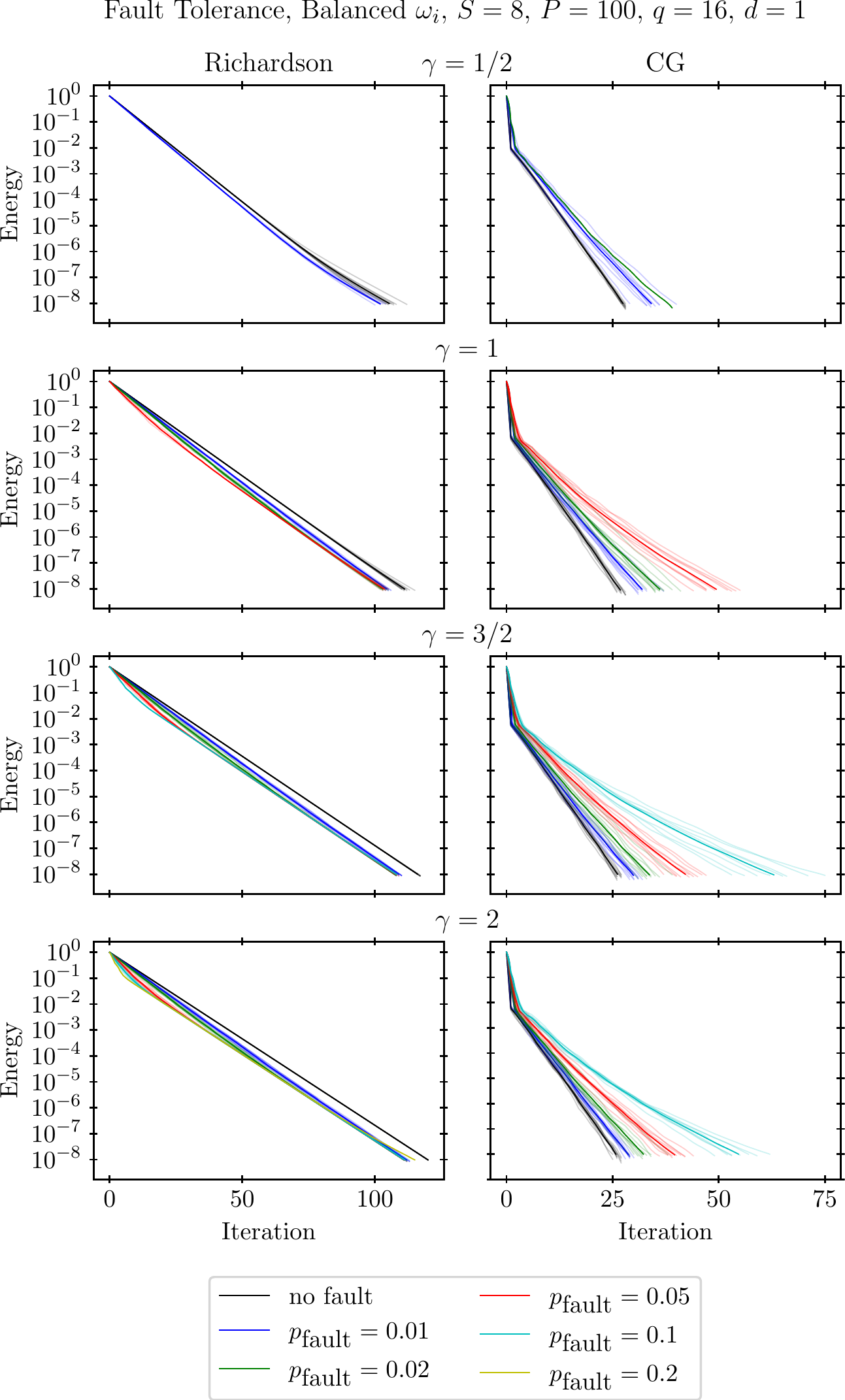}
	\caption{Number of iterations (single runs and mean) of the fault-tolerant version of the $\omega_i$-weighted balanced two-level additive Schwarz/Richardson iteration (left) and the associated preconditioned conjugate gradient method (right) for different fault probabilities and different overlaps $\gamma=0.5,1, 1.5, 2, q=16, d=1$.}
\label{ft_bal}
\end{figure}

We see that in the $\omega_i$-weighted balanced Richardson case there is a slight preasymptotic reduction of the initial error in the first iterations, which shortens but is more profound the larger the values of $p_\text{fault}$ and of $\gamma$ are. After that an asymptotic error reduction can be observed  
which is nearly the same as for the non-faulty case. A close inspection reveals that, depending on $p_\text{fault}$, this asymptotic rate becomes slightly larger in the end compared to the non-faulty case. This is however to be expected from our theory, compare also the bound for the (asymptotic) rate in Theorem \ref{theo2}.
The preasymptotic behavior stems from the fact that a moderate number of faults indeed restores a certain smoothing effect in the preasymptotic regime. 
This observed (small) preasymptotic smoothing effect in the faulty cases can best be explained by looking at all the fine grid points that belong to a failing processor. 
For example, the overlap $\gamma=2$ and the associated weighting $\omega_i I=\frac{1}{2\gamma+1}I=\frac{1}{5}I$ yields an (asymptotically) optimal damping parameter of $\xi_\textrm{opt}=2/(\lambda_\textrm{min} + \lambda_\textrm{max})\approx 1.86$ for the balanced preconditioner. Recall that $\omega_0$ is chosen as one here. 
Then the effective preconditioner, including damping, can be seen to be
\begin{equation}\label{wda}
\begin{aligned}
 \xi_\textrm{opt}C^{-1}_{(2),D, bal} &= \xi_\textrm{opt}(G^T C^{-1}_{(1), D} G + F) = \xi_\textrm{opt} (\frac{1}{5} G^T  C^{-1}_{(1)} G + F) \\
	&= \xi_\textrm{opt}\frac{1}{5} G^T  C^{-1}_{(1)} G + \xi_\textrm{opt}F  \approx 0.37 ~ G^T  C^{-1}_{(1)} G + 1.86 ~ F.
\end{aligned}
\end{equation}
On the other hand, an intuitive way of weighting the respective fine grid corrections in the iteration would be to average just over all fine grid contributions per point, which resembles a certain smoothing of higher frequency error components. For our example, this would yield a weighting factor of $\frac{1}{5}$, i.e.\ the associated preconditioner would then be $\frac{1}{5} G^T  C^{-1}_{(1)} G + F$.
A comparison of this preconditioner to the above effective preconditioner (\ref{wda}), which involves the asymptotically optimal dampening parameter $\xi_\textrm{opt}$, shows that 
the optimally damped version shifts the balance between fine and coarse level more towards the coarse side to achieve optimal convergence in the asymptotic regime. 
But this in turn yields worse smoothing properties in the preasymptotic regime compared to the dampening due to averaging, since the factor $0.37$ of the fine grid contribution does not match the fact that each point is overlapped by five fine subdomains.
However, if we now consider the faulty cases, the occurrence of a few faults brings the expected factor due to averaging closer to this factor $0.37$. 
For example, if one of five neighboring processors fails, one would expect an averaging factor of $\frac{1}{4}$, since now only $4=5-1$ processors contribute to the correction at all points belonging to the failing processor. 
In the case of two neighboring faults the expected weighting factor due to averaging of $\frac 1 3 $ in the points belonging to both failing processors  is now even closer to our asymptotically optimal fine grid weighting of $0.37$.
Therefore we see an improved initial smoothing behavior for a moderate number of neighboring faults compared to the non-faulty case simply due to the fact that 
the effective weighting parameter for the fine grid, i.e.\ $\xi_\textrm{opt}\cdot\omega_i$, 
more closely resembles 
the intuitive weighting factor due to averaging when a small number of neighboring faults is present.
Note however that, for larger values of $p_\text{fault}$, the faulty cases still produce asymptotic rates that are slightly worse than those of the non-faulty case.  
The somewhat better convergence of the faulty cases is therefore only a result of the better initial smoothing.
Hence one could potentially improve the overall iteration scheme by starting with the dampening parameter due to averaging and then (gradually) switching to the optimal damping parameter after a certain number of iterations. Further experiments showed that the non-faulty case $p_\textrm{fault} = 0$ then indeed exhibits better overall average convergence than the faulty cases and that the convergence for the faulty cases gradually deteriorates with rising values of $p_\textrm{fault}$. However, a proper switching between these two damping parameters is not straightforward and the convergence analysis of such a linear iterative method with dynamically changing damping is quite involved. This is future work.

For the associated conjugate gradient method the convergence behavior is different: We see that there is a sudden reduction of the initial
error in the first few iterations by a factor of about 100, which even grows slightly with rising values of $\gamma$. This holds for all values of $p_\text{fault}$. Here indeed the same smoothing effect due to averaging as in the faulty Richardson case comes into play. However, instead of the fixed $\xi_{opt}$, the conjugate gradient iteration determines the optimal step size in the preconditioned search direction in the first iteration step. This leads to a substantial smoothing effect 
in the first (few) iteration(s) and thus to the steep drop of the error as seen in Figure \ref{ft_bal} (right).
After that the asymptotic error reduction rate gradually sets in. It becomes monotonously worse for rising values $p_\text{fault}$, as is intuitively to be expected. 
Again, the necessary number of iterations is substantially smaller due to the $\kappa$-versus-$\sqrt \kappa$ effect of the conjugate gradient method compared to the simple linear Richardson iteration. The conjugate gradient approach needs approximately a quarter of the iterations of the Richardson approach in the non-faulty case. 
But it can be seen that the conjugate gradient method tends to lose its advantage for larger values of $p_\text{fault}$. For example, for $\gamma=2$ and $p_\text{fault}=0.1$, it merely needs approximately half the number of iterations compared to the Richardson approach.
Moreover recall that runs, for which the recovery failed 
due to a larger number of faults than can be repaired with the given values of $\gamma$,
are discarded in the computation of the average convergence rate.
This is the reason why for example for $\gamma=1/2$ in Figure \ref{ft_bal} only averaged convergence plots for $p_\text{fault}=0,0.01, 0.02$ could be displayed, whereas the curves for the larger values of $p_\text{fault}$ are missing. 
We furthermore see that for the larger overlap $\gamma=2$ also faults with larger fault probabilities of $p_\text{fault}$ up to 0.1 could be recovered from and are treated successfully. We finally see that, if $p_\text{fault}$ is larger than 0, we gradually obtain better results for larger overlap
values $\gamma$ as long as the runs are successful at all. For example, the number of iterations improves with $p_\text{fault}=0.05$  from 50 for $\gamma=1$ to 43 for $\gamma=1.5$ and to 37 for $\gamma=2$. In the case  $p_\text{fault}=0$ such a gain for rising values of $\gamma$ is still there, but it is substantially smaller. 
In further experiments with larger values
of $P$, like for example $P = 200$, we could observe analogous results with a further improvement in convergence for the faulty situations. This is due to the larger subdomain sizes and the exact subdomain solves, while now larger costs are involved of course. 

Altogether, we see that the occurrence of a moderate number of faults does not have much influence on the convergence rate of the fault-tolerant version of the balanced two-level additive Richardson iteration, as already predicted by our convergence theory in Subsection \ref{sec:thft}. We observe that, due to the error reduction in the preasymptotic regime, the necessary number of iterations even decreases compared to the non-faulty case.
Analogous observations could be made for the plain Richardson iteration.
Note that a similar behavior was already present in the numerical experiments of \cite{Griebel.Oswald:2019}, Figure 3, for the case of a failure rate of eight percent using a Weibull distribution.
For one, two, five, ten and twenty failing processors in each iteration, which is indeed already an extremely large number of failures regarding the short computation time and the employed number of 100 processors, the influence on the number of iterations 
is deteriorating slightly and is thus only marginal.

This is somewhat different when considering the associated preconditioned conjugate gradient method. Its convergence in the non-faulty case is substantially faster (i.e. it is dependent on $\sqrt \kappa$ instead of just $\kappa$), however we no longer have a sound convergence theory in the faulty situation. Indeed, the behavior of the conjugate gradient approach is much more sensitive to the occurrence of faults, as can be seen on the right side of the Figure  \ref{ft_bal}. Now the deviation from the non-faulty convergence rate grows substantially with rising values of $p_\text{fault}$. For example, for $\gamma=2$, the necessary number of iterations grows in the $\omega_i$-weighted balanced conjugate gradient case with $p_\text{fault}=0,0.01,0.02,0.05, 0.1$ with $25,28,31,37,54$. Note furthermore that this dependence on the fault probability decreases with rising values of the overlap parameter $\gamma$.
But, all in all, the fault-tolerant version of our conjugate gradient method still exhibits excellent convergence properties in practice.

Next let us consider the convergence behavior of our $\omega_i$-weighted balanced methods for a higher dimensional example. Here we focus on $d=6$ and again set $P=100$ and $S=8$. 
Thus the size $N$ of the overall problem is fixed to $100 \cdot 2^8$ with a size of $2^8$ for each subproblem. Now we study the error reduction property of our fault-tolerant algorithms and consider again, besides the case of no fault at all, the cases where the faults are randomly generated in each cycle for each processor according to a Bernoulli distribution. We again employ the different fault-probabilities $p_\text{fault}=0.01,0.02, 0.05, 0.1, 0.2$ where, on average, one, two, five, ten and twenty  processors fail in each cycle. For the overlap parameter $\gamma=0.5,1,1.5,2$, the results for the  $\omega_i$-weighted balanced two-level Richardson/Schwarz iteration and the correspondingly preconditioned conjugate gradient method are shown in Figure \ref{ft_add_bal_d=6}.    
\begin{figure}[htb!]
\centering
	\includegraphics[width=0.82\textwidth,height=0.82\textheight]{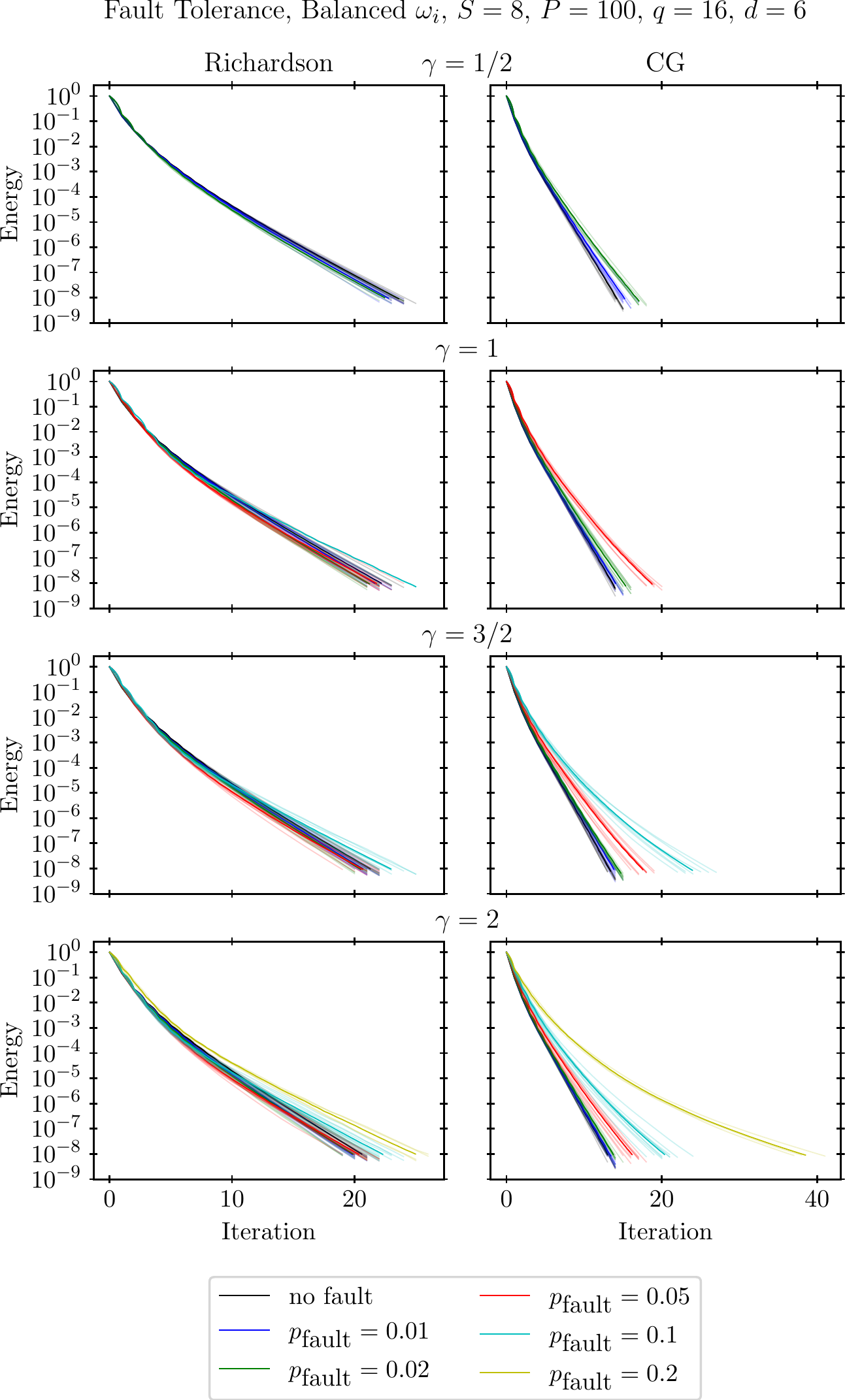}
	\caption{Number of iterations (single runs and mean) of the fault-tolerant version of the  $\omega_i$-weighted balanced two-level additive Schwarz/Richardson iteration (left) and the associated preconditioned conjugate gradient method (right) for different fault probabilities and different overlaps $\gamma=0.5, 1, 1.5, 2, q=16, d=6$.}
\label{ft_add_bal_d=6}
\end{figure}

We now observe that, for the Richardson case, the preasymptotic error reduction is substantially longer but smoothly changes to the asymptotic convergence, which is in contrast to the one-dimensional case. Furthermore we see that we obtain nearly the same convergence curves for all the different values of $p_\text{fault}$ for a fixed value of $\gamma$. Moreover we now find that the number of iterations stays the same or slightly worsens for larger values of $p_\text{fault}$  if $\gamma$ increases.
Again we observe that there are no results for large values of $p_\text{fault}$  but small overlap $\gamma$, which stems from the failure of the recovery process due to insufficient redundancy.
Consequently, $\gamma$ should be chosen small but nevertheless still sufficiently large to guarantee the necessary redundancy with respect to the fault rate $p_\text{fault}$. 
This holds for the associated conjugate gradient method as well. Moreover, for $p_\text{fault}=0$, the number of iterations is not very sensitive to $\gamma$, it even marginally improves. For larger values of $p_\text{fault}$ it monotonously improves with rising values of $\gamma$. 
The corresponding number of iterations is still smaller compared to the Richardson case, possibly due to the $\sqrt{\kappa}$-versus-$\kappa$ effect, but the advantage of the conjugate gradient method now tends to be not as profound anymore. Especially for larger fault rates $p_\text{fault}$ and larger values of $\gamma$ the conjugate gradient methods tends to lose its  superiority over the Richardson iteration and can even be inferior, as the example $\gamma=2, p_\text{fault}=0.2$ shows: Now the Richardson approach only needs 25 iterations, whereas the corresponding conjugate gradient method needs 40 iterations. In any case, depending on the choice of the value of $p_\text{fault}$, the value of $\gamma$ must be large enough to allow for a proper recovery after faults.

Finally, we consider the weak scaleup behavior of our $\omega_i$-weighted balanced algorithms under faults. 
We select a substantial overlap of $\gamma=3$ and set $d=1,q=16,S=8$. Moreover we allow the values $p_\text{fault} =0, 0.01, 0.02,0.05, 0.1,0.2$. This corresponds to $p_\text{fault} \cdot P$ faults in each cycle on average for varying numbers of $P$. The resulting number of iterations are shown in Figure \ref{weakscale_ft_it_gamma=3}. The results for the average convergence rate $\rho^{ave}$ and the asymptotic convergence rate $\rho^{asy}$ are shown in Figure \ref{weakscale_ft_rate_gamma=3}.
\begin{figure}[htb!]
\centering
	\includegraphics[width=0.82\textwidth,height=0.36\textheight]{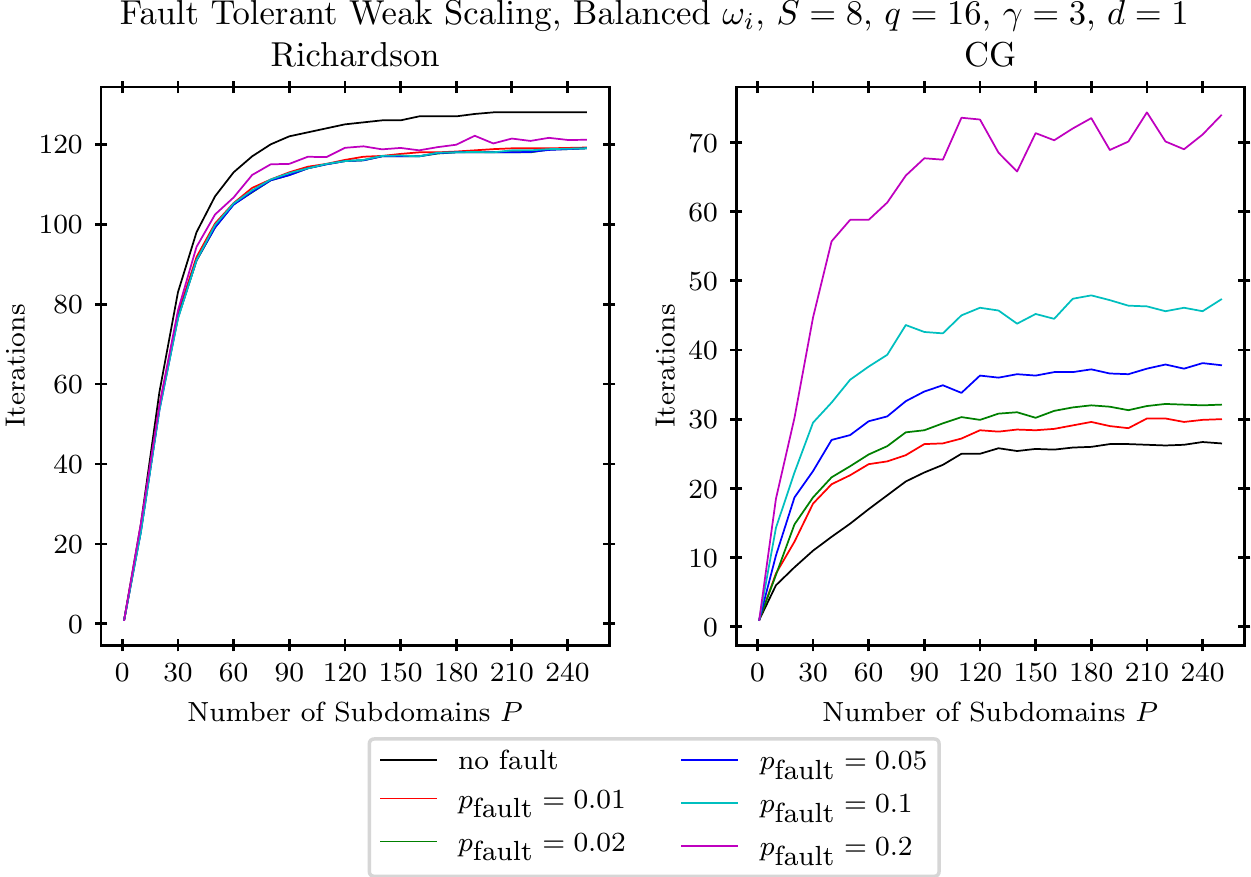}
	\caption{Number of iterations (mean) of the fault-tolerant version of the $\omega_i$-weighted balanced two-level additive Schwarz/Richardson iteration (left) and the associated conjugate gradient method (right) for different fault probabilities, $\gamma=3,S=8,q=16, d=1$.}
\label{weakscale_ft_it_gamma=3}
\end{figure}
\begin{figure}[htb!]
\centering
	\includegraphics[width=0.82\textwidth,height=0.36\textheight]{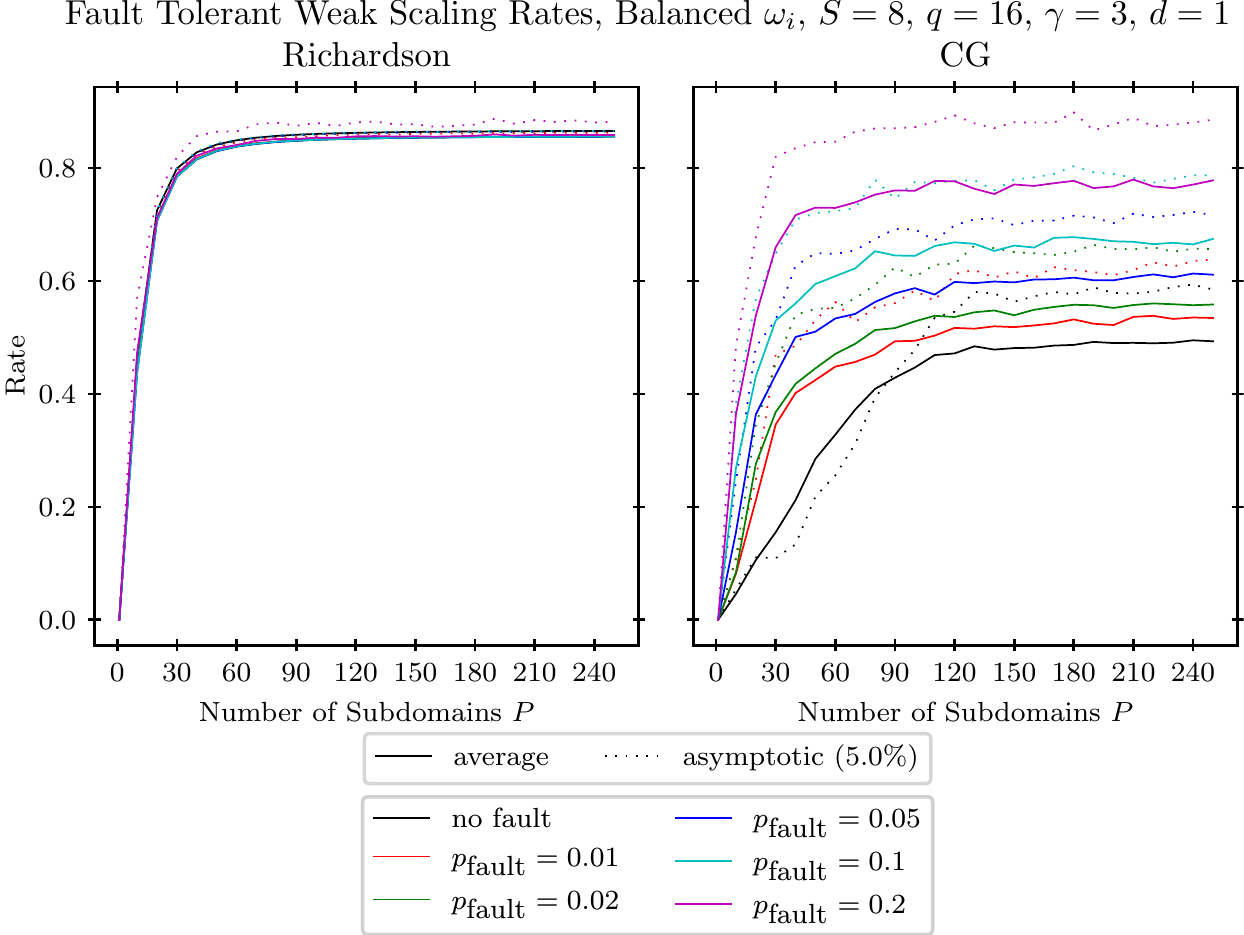}
	\caption{Average and asymptotic rates (mean) of the fault-tolerant version of the $\omega_i$-weighted balanced two-level additive Schwarz/Richardson iteration (left) and the associated conjugate gradient method (right) for different fault probabilities, ${\gamma=3,}$ $S=8,q=16, d=1$.}
\label{weakscale_ft_rate_gamma=3}
\end{figure}

We see that the weak scaling constant associated to the iterations, i.e. the respective number of iterations for rising numbers of $P$,
first (for $p_\text{fault}=0.01,0.02,0.05,0.1$) marginally improves in the Richardson case and then slightly deteriorates again ($p_\text{fault}=0.2$), which can be explained by the preasymptotic behavior.
In the conjugate gradient case it monot\-o\-nously deteriorates for rising values of $p_\text{fault}$ as intuitively expected, but the corresponding iteration numbers are much better than those for the Richardson iteration. An analogous behavior can be observed for the average convergence rate $\rho^{ave}$.
In addition the corresponding weak scaling constant for the asymptotic convergence rate $\rho^{asy}$ behaves similarly and is only worse by a value of about 0.02 and about 0.1 than for $\rho^{ave}$ in the Richardson and conjugate gradient case, respectively.

At last, for the six-dimensional case, the weak scaleup behavior of our $\omega_i$-weighted balanced algorithms under faults, i.e. the number of iterations and the average and the asymptotic convergence rate, are shown in the Figures \ref{weakscale_ft_it_gamma=3_d=6} and \ref{weakscale_ft_rate_gamma=3_d=6}.
\begin{figure}[htb!]
\centering
	\includegraphics[width=0.82\textwidth,height=0.36\textheight]{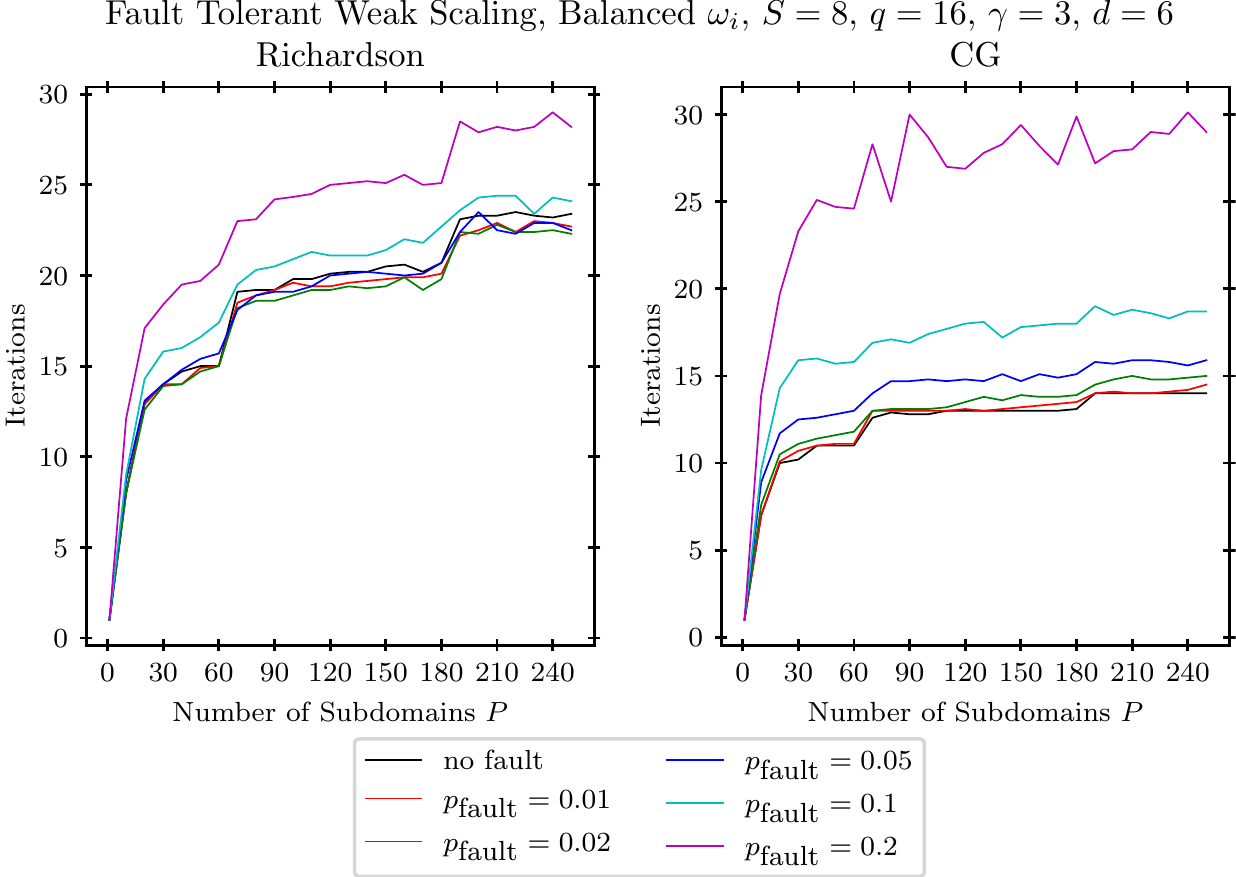}
	\caption{Number of iterations (mean) of the fault-tolerant version of the $\omega_i$-weighted balanced two-level additive Schwarz/Richardson iteration (left) and the associated conjugate gradient method (right) for different fault probabilities, ${\gamma=3,}$ $S=8,q=16, d=6$.}
\label{weakscale_ft_it_gamma=3_d=6}
\end{figure}
\begin{figure}[htb!]
\centering
	\includegraphics[width=0.82\textwidth,height=0.36\textheight]{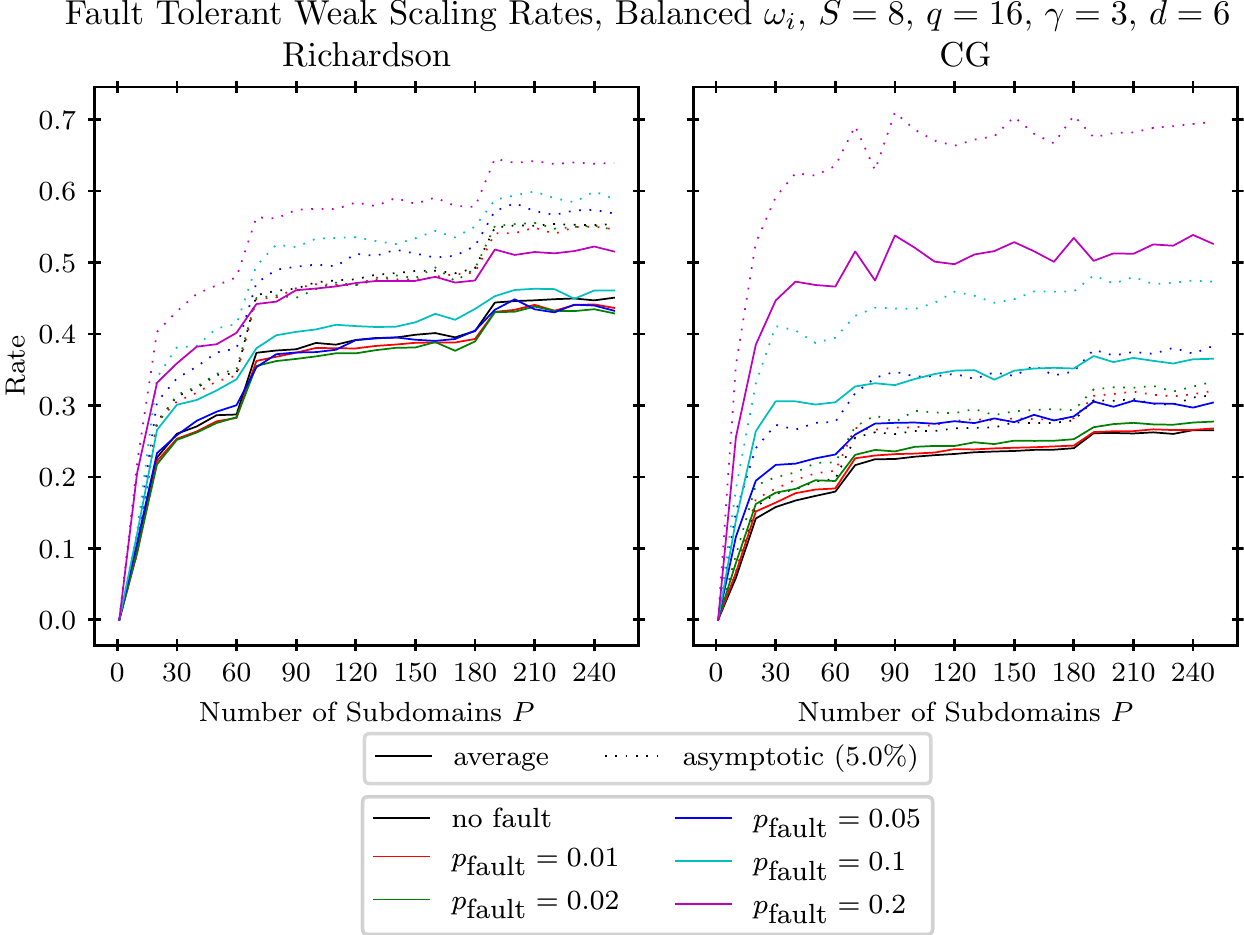}
	\caption{Average rate and asymptotic rate (mean) of the fault-tolerant version of the $\omega_i$-weighted balanced two-level additive Schwarz/Richardson iteration (left) and the associated conjugate gradient method (right) for different fault probabilities, $\gamma=3,S=8,q=16, d=6$.}
\label{weakscale_ft_rate_gamma=3_d=6}
\end{figure}

We see that the algorithm exhibits good weak scaling properties also for the six-dimensional case. For rising values of $P$, the number of iterations stays at around 23 for the balanced Richardson iteration and is nearly independent of the fault rate (except of the case $p_\text{fault}=0.2 $ which needs 28 iterations).
Weak scaling can also be observed for the corresponding conjugate gradient method.
Now however the weak scaling constant is dependent on the fault rate $p_\text{fault}$ and increases for rising numbers of faults, albeit with still less iterations than in the Richardson case (except for the case $p_\text{fault}=0.2 $). The situation is the same for the average rate and the asymptotic rate is basically the same. 

Altogether we conclude that, for a moderate number of faults and a sufficiently large choice of $\gamma$, our fault-tolerant algorithms work and scale well with rising numbers of $P$ and associated rising number $p_\text{fault} \cdot P$ of faults. Thus we obtain fault-tolerance also in the weak scaling regime.

\section{Discussion and conclusion}\label{sec:conclude}
In the present article we proposed and studied a simple additive two-level overlapping domain decomposition method for $d$-dimensional elliptic PDEs, which relies on a space-filling curve approach to create equally sized subproblems and to determine an overlap based on the one-dimensional ordering of the space-filling curve.  Moreover we employed agglomeration and a purely algebraic Galerkin discretization in the construction of the coarse problem to avoid $d$-dimensional geometric information. This resulted in a parallel subspace correction iteration and an associated parallel preconditioner for the conjugate gradient method which can be employed for any value of $P$ and in any dimension $d$. 
It exhibits the same asymptotic convergence rate as a conventional geometric domain decomposition method, i.e.
 $\kappa =O(1+\frac H \delta)$ with $\delta= c h$, compare  \cite[Theorem 3.13]{toselliwidlund04}  and (\ref{Hh}), but allows for more flexibility and simplicity with respect to $N$, $P$ and $d$ due to the space-filling curve construction.
 Moreover it turned out that its balanced version according to (\ref{bala}), which, compared to the plain version, involves two additional matrix-vector-multiplications, three additional vector-vector-additions and one additional solve of the coarse scale problem, was clearly superior and is therefore the recommended approach. 
To gain fault-tolerance
we stored on each processor, besides the data of the associated subproblem, a copy of the coarse problem and also the data of a fixed number of (partial) neighboring subproblems with respect to the one-dimensional ordering of the subproblems induced by the space-filling curve. This redundancy then allows to restore necessary data if 
  processors fail during the computation and to therefore restore convergence in the case of faults. We showed theoretically and practically
that the convergence rate of such a linear iteration method for a moderate number of faults stays approximately the same in expectation and only its order constant deteriorates slightly due to the faults. This was also observed in practice for the associated preconditioned conjugate gradient method. Moreover weak scaling could be achieved in any situation.
Altogether, we obtained a fault-tolerant, parallel and efficient domain decomposition method based on space-filling curves, which is especially suited for higher-dimensional elliptic problems. 
	
Note here that we employed an exact subdomain solver in our domain decomposition method. It can be replaced by a faster but iterative and thus somewhat inexact method, like for example algebraic multigrid. This needs to be explored in more detail in the future.

Moreover our algorithmic approach possesses a range of parameters, which influence each other and need to be chosen properly. The  determination of an optimal set of parameters is a difficult task. Nevertheless some general guidelines could be derived: One key parameter is the number of degrees of freedom for the coarse grid. Here it became clear that the coarse grid size should be chosen relative to the fine grid size. The distance between the two levels controls the balance between the required number of iterations on the one hand and the cost to solve the coarse system in each iteration on the other hand. Fortunately, as the dimension increases, the size of the coarse grid can be decreased, which reduces the cost for each iteration. 
Furthermore, as seen from Lemma \ref{lemweig}, it is sensible to choose another key parameter of our scheme, namely the overlap $\gamma$, as an integer multiple of $\frac 1 2$, i.e. $\gamma=\frac 1 2 n, n \in \mathbb{N}_+$. This way, the different possible scalings via $D_i$ and $\omega_i$ coincide and boil down to a constant scaling with $ 1/ (2 \gamma+1)$ and symmetry of the associated operator is guaranteed, at least for the Laplacian and diffusion operators with constant coefficients. Furthermore the choice of $\gamma$ as an integer multiple of $\frac 1 2 $ is also natural for a uniform redundancy of stored data, which is advantageous for the recovery and reconstruction in the case of faults.
Moreover we learned from our convergence studies and from our fault tolerance experiments that $\gamma$ can be chosen to be quite small for any real world setting. For example, Figures \ref{ft_bal} and \ref{weakscale_ft_rate_gamma=3_d=6} showed that $\gamma=0.5$ or $\gamma=1$ still produce excellent convergence for both considered types of solvers for fault rates of up to 2\% and 5\% per iteration/cycle, respectively. These fault rates are indeed way beyond those of any existing high performance compute system, where merely one or very few faults currently occur on average each day. They are also beyond the expected fault rates for future exascale compute systems.
For practical purposes this means that we will see no deterioration of the convergence rates at all, as long as the data recovery process is executed properly. This is guaranteed due to the overlap and the redundant parallel storage of current data and local parts of the iterate 
which allows the algorithm to recover after a fault occurs. 
In this respect, iterative methods are fault-forgiving and the iteration can continue dynamically.
Moreover our aim was to derive a fault-tolerant parallel domain decomposition solver, which can be later employed for the subproblems in the combination method. There we encounter short run times, since the numbers of degrees of freedom of all the arising subproblems are similar to the one-dimensional case, i.e. of size $\approx 2^L$ only, compare (\ref{eq:combi}).
But this is good news: The fault-repair mechanism is rarely needed for real-life fault probabilities within the short run times of our solver for the small subproblems arising in the combination method.

We next intend to employ our domain decomposition method as the inner, parallel and fault-tolerant solver for the subproblems arising in the sparse grid combination method. Let us recall the sparse grid combination formula from (\ref{eq:combi}), i.e.
\begin{equation}\label{eq:combi2}
	u({ x}) \approx u^{(c)}_{L}({ x}):=
	\sum_{i=0}^{d-1} (-1)^i \left( \begin{array}{c} {d-1}\\{i}\end{array}\right) \sum_{|{l}|_1=L+(d-1)-i} u_{l}({x}).
\end{equation}
Here the different subproblems can be computed in parallel independently of each other.
Moreover, for layer $i$, we have 
$$ \left(  \begin{array}{c} {L+d-2-i}\\{d-1}\end{array}\right)$$
	different subproblems, where each subproblem has approximately the same number $N(d,i)= \prod_{j=1}^d (2^{l_j}+1)=O(2^{L+d-1-i})$ of degrees of freedom. We now employ an additional step of parallelization by means of the domain decomposition treatment of each of these subproblems. To this end, we use 
\begin{equation}\label{Pchoice}
P:= \hat P \cdot 2^{d-1-i}
\end{equation}
subdomains and thus processors for each subproblem on layer $i$. This choice of a $d$- and $i$-dependent $P$ via a universal $\hat P$ in (\ref{Pchoice}) results in a subdomain size of roughly $N(d,i)/P = 2^{L+d-1-i}/P =  2^{L+d-1-i}/(\hat P \cdot 2^{d-1-i}) = 2^L/\hat P$, which is independent of $d$ and $i$. Then, in our elliptic situation and except for the coarse scale problems, only small subdomain problems appear. But, depending on $d, L$ and $\hat P$, there can be millions of these subdomains for the combination method. To be precise,  the amount of fine level subdomains for the overall set of subproblems in the combination method (\ref{eq:combi}) is
$$\frac {\hat P}{(d-1)!} \sum_{k=0}^{d-1} \left( 2^{d-1-k} \prod_{i=1}^{d-1} (L+d-1-k-i)\right),
$$
where each occurring subdomain problem has approximately equal size $2^L/\hat P$.

As an example, consider the goal of an overall discretization error of $10^{-12}$. Then ${h\approx 10^{-6}}$ and $L=20$. Moreover we choose $\hat P=2^{10}$. For this case, Table \ref{NSPr} gives the number of subdomain problems arising in the combination method if our domain decomposition approach is employed with $P$ given by (\ref{Pchoice}) for each subproblem of layer $i$.
\begin{table}[ht]
\centering
	\begin{tabular}{c|c|c|c|c|c|c}
$d=$& 1& 2 & 3& 4& 5& 6\\
\midrule
& $1 \cdot \hat P$ & $59 \cdot  \hat P$ & $1.391 \cdot  \hat P$ & $20.889 \cdot  \hat P$ & $ 237.706 \cdot  \hat P$ & $ 1.754.744 \cdot  \hat P$\\
\end{tabular}
\caption{Overall number of subdomain problems, each one of size $2^L/\hat P$, in the combination method with domain decomposition of each subproblem.} 
\label{NSPr}
\end{table}

We see that we obtain a large number of subdomain problems to be solved in a doubly parallel way (one level of parallelism stems from the combination formula itself, the other stems from the domain decomposition of each subproblem of the combination method into subdomain problems). This will allow the use of extremely large parallel compute systems, i.e. the larger $d$ is the larger the parallel system is that can be employed in a meaningful way. Furthermore the use of the fault-tolerant domain decomposition method as the inner solver for the subproblems in the combination method results in a fault-tolerant and parallel solver for the combination method in a natural way. There, the fault-repair mechanism is provided on the fine domain decomposition level and not just on the coarse subproblem level of the combination method, as it was previously done in \cite{Harding14, Pflueger.Bungartz.Griebel.ea.2014, Ali2016ComplexSA, ober17, Rentrop.Griebel.2020}.

For our simple elliptic Poisson problem we encounter very short computing times in the 10 second range for each of the subproblems arising in the combination method, and therefore a fault occurs extremely rarely during its run time. The true value of our fault-tolerant approach will rather become apparent in time-dependent parabolic problems with long time horizons and long computation times, where our elliptic DDM-solver is employed in each time step of an implicit time-discretization method within the combination method. 
This however is future work.

\subsection*{Acknowledgment}
Michael Griebel was supported by a research grant 
of the {\em Bayerische Akademie der Wissenschaften} for the project {\em Stochastic subspace correction as a fault-tolerant sparse grid combination method on massive parallel compute systems for high-dimensional parametric diffusion problems}. The support of the Leibniz-Rechenzentrum in Garching, the support of Prof. Dr. Dieter Kranzlm\"uller, LRZ Garching, LMU M\"unchen, Institut f\"ur Informatik, and the support of Prof. Dr. Hans-Joachim Bungartz, TU M\"unchen, Institut f\"ur Informatik,
is greatly acknowledged. Michael Griebel thanks the Institut f\"ur Informatik of the LMU M\"unchen and the Leibniz-Rechenzentrum of the Bayerische Akademie der Wissenschaften for their hospitality.


\end{document}